\DeclareMathAlphabet{\mathcalligra}{T1}{calligra}{m}{n}
\newcommand{\specialcell}[1]{\ifmeasuring@#1\else\omit$\displaystyle#1$\ignorespaces\fi}
\definecolor{join}{RGB}{0,77,178}
\definecolor{darkblue}{rgb}{0.0,0,0.7} 
\newcommand{\darkblue}{\color{darkblue}} 
\definecolor{darkred}{rgb}{0.7,0,0} 
 \definecolor{lightgrey}{rgb}{0.7,0.7,0.7}
\definecolor{meet}{RGB}{255,205,111}
\definecolor{join}{RGB}{0,77,178}
\newtheorem{theorem}{Theorem}[section]
\newtheorem{proposition}[theorem]{Proposition}
\newtheorem{corollary}[theorem]{Corollary}
\newtheorem{lemma}[theorem]{Lemma}
\theoremstyle{definition}
\newtheorem{definition}[theorem]{Definition}
\newtheorem{example}[theorem]{Example}
\newtheorem{conjecture}[theorem]{Conjecture}
\newtheorem{assumption}[theorem]{Assumption}
\newtheorem{question}[theorem]{Question}
\newenvironment{remark}
  {\pushQED{\qed}\remarkx}
  {\popQED\endremarkx}
\Crefname{conjecture}{Conjecture}{Conjectures}
\Crefname{assumption}{Assumption}{Assumptions}
\Crefname{subsection}{Subsection}{Subsections}
\Crefname{remarkx}{Remark}{Remarks}
\newcommand{\defn}[1]{\emph{\darkblue #1}}
\numberwithin{equation}{subsection}
\renewcommand{\mod}{\operatorname{mod}}
\newcommand{\U}{\mathcal{U}}
\newcommand{\Fix}{\mathrm{Fix}}
\newcommand{\GL}{\mathrm{GL}}
\renewcommand{\U}{\mathrm{U}}
\newcommand{\Feg}{\operatorname{Feg}}
\newcommand\blfootnote[1]{%
  \begingroup
  \renewcommand\thefootnote{}\footnote{#1}%
  \addtocounter{footnote}{-1}%
  \endgroup
}
\DeclareMathAlphabet{\mathcal}{OMS}{cmsy}{m}{n}
\title[Rational Catalan Numbers]{Rational Catalan Numbers for Complex Reflection Groups}
\author[W.~Miller]{Weston Miller}
\address[W.~Miller]{University of Texas at Dallas}
\email{Weston.Miller@utdallas.edu}
\begin{document}
\begin{abstract}
    Assuming standard conjectures, we show that the canonical symmetrizing trace evaluated at powers of a Coxeter element produces rational Catalan numbers for irreducible spetsial complex reflection groups. This extends a technique used by Galashin, Lam, Trinh, and Williams to uniformly prove the enumeration of their noncrossing Catalan objects for finite Coxeter groups. 
\end{abstract}

\maketitle

\section{Introduction}
\subsection{Catalan combinatorics}
\blfootnote{This work was supported in part by the National Science Foundation under grant DMS-2246877}
The prototypical noncrossing Coxeter-Catalan objects are the \defn{noncrossing partitions}. In type $A$ with the usual Coxeter element $c=(1,2,\ldots,n)$, these correspond to partitions $\{B_1,\dots,B_k\}$ of the set $\{1,2,\dots,n\}$ such that there do not exist $a < b < c < d$ such that $a,c \in B_i$ and $b,d \in B_j$ with $i \neq j$.  These type $A$ noncrossing partitions are counted by $\operatorname{Cat}_n:=\frac{1}{n+1}\binom{2n}{n}$.

More generally, let $W$ be a finite Coxeter group acting in the reflection representation $V$, and let $c$ be a Coxeter element of $W$.  The \defn{$c$-noncrossing partition lattice} $\mathrm{NC}(W,c)$ is the interval $[e,c]_T$ in the absolute order, and the number of noncrossing partitions is counted by the \defn{Coxeter-Catalan number}
\[
    \operatorname{Cat}(W) := \prod_{i=1}^n \frac{h +1 + e_i}{d_i},
\]
where $h = d_n$ is the Coxeter number of $W$, $d_1 \leq \cdots \leq d_n$ are the degrees of a set of algebraically independent homogeneous polynomials which generate the algebra of invariants $\operatorname{Sym}(V^*)^W$, and $e_i = d_i - 1$.

In type $A$, the \defn{rational Catalan numbers} $\operatorname{Cat}_{a,b}:=\frac{1}{a+b}\binom{a+b}{a}$ count \defn{rational Dyck paths}---lattice paths in an $a \times b$ rectangle (for $a$ and $b$ coprime) that stay above the diagonal. For $a = n$ and $b = n+1$, this recovers the usual Catalan numbers.  In other types, these rational Dyck paths are generalized to certain \defn{nonnesting objects}, but such objects are only defined for Weyl groups.
\begin{figure}[h!]
\begin{center}
    \begin{tikzpicture}[scale = 0.4]
        \draw[thin] (0,0) grid (3,5);
        \draw[thin] (0,0) -- (3,5);
        \draw[ultra thick,black] (0,0) -- (0,1) |- (0,2) |- (0,3) |- (0,4) |- (0,5) |- (1,5) |- (2,5) |- (3,5);
        \draw (5,0) grid (8,5);
        \draw{(5,0) -- (8,5)};
        \draw[ultra thick,black] (5,0) -- (5,1) |- (5,2) |- (5,3) |- (5,4) |- (6,4) |- (6,5) |- (7,5) |- (8,5);
        \draw (10,0) grid (13,5);
        \draw{(10,0) -- (13,5)};
        \draw[ultra thick,black] (10,0) -- (10,1) |- (10,2) |- (10,3) |- (10,4) |- (11,4) |- (12,4) |- (12,5) |- (13,5);
        \draw (15,0) grid (18,5);
        \draw{(15,0) -- (18,5)};
        \draw[ultra thick,black] (15,0) -- (15,1) |- (15,2) |- (15,3) |- (16,3) |- (16,4) |- (16,5) |- (17,5) |- (18,5);
        \draw (20,0) grid (23,5);
        \draw{(20,0) -- (23,5)};
        \draw[ultra thick,black] (20,0) -- (20,1) |- (20,2) |- (20,3) |- (21,3) |- (21,4) |- (22,4) |- (22,5) |- (23,5);
        \draw (25,0) grid (28,5);
        \draw{(25,0) -- (28,5)};
        \draw[ultra thick,black] (25,0) -- (25,1) |- (25,2) |- (26,2) |- (26,3) |- (26,4) |- (26,5) |- (27,5) |- (28,5);
        \draw (30,0) grid (33,5);
        \draw{(30,0) -- (33,5)};
        \draw[ultra thick,black] (30,0) -- (30,1) |- (30,2) |- (31,2) |- (31,3) |- (31,4) |- (32,4) |- (32,5) |- (33,5);
    \end{tikzpicture}
\end{center}
\caption{The rational Dyck paths counted by $\operatorname{Cat}_{3,5}$}
\end{figure}

In \cite{galashin2022rational}, the authors define rational noncrossing Coxeter-Catalan objects called \defn{maximal $\mathbf{c}^p$-Deograms}, counted by the \defn{rational Coxeter-Catalan numbers}
\[
    \operatorname{Cat}_p(W) := \prod_{i=1}^n \frac{p + (pe_i \mod h)}{d_i},
\]
where $p$ is coprime to $h$.  These objects are defined for finite Coxeter groups.  As part of their type-uniform proof of this enumeration, they use Hecke algebra traces to compute the point count of braid Richardson varieties over a finite field, producing $q$-deformed rational Catalan numbers:
\[
     \operatorname{Cat}_p(W; q) := \prod_{i=1}^n \frac{[p + (pe_i \mod h)]_q}{[d_i]_q}.
\]

Many of the objects used in their proof can also be defined for the well-generated complex reflection groups, so it is natural to try to compute these traces in the complex case. It turns out that the {\it well-generated} condition is too weak for certain representation-theoretic techniques to work---the necessary condition is for the group to be {\it spetsial}. These spetsial complex reflection groups (see \Cref{def:spets}) are well-generated complex reflection groups that behave as if they were the Weyl group for some connected reductive algebraic group. In the Shephard-Todd classification, the \defn{spetsial} groups are 
\[
    W(A_{n-1}), \quad G(m,1,n), \quad G(m,m,n), \quad G_i \text{ where } i \in \{4,6,8,14,23,\dots,30,32,\dots,37\}.    
\]
Analogs of unipotent characters, Harish-Chandra theory, and Lusztig’s Fourier transform can be defined combinatorially for these groups, which allows techniques from the representation theory of finite groups of Lie type to be extended to spetsial complex reflection groups. 

As the main result of this paper, \Cref{thm:main}, we show that for irreducible spetsial complex reflection groups the trace of a power of a Coxeter element still produces a rational Catalan number even though there are not braid Richardson varieties in this context. Precisely, we prove the following result:
\begin{theorem}
    Let $W$ be an irreducible spetsial complex reflection group with Coxeter number $h$, and let $c$ be a $\zeta_h$-regular element of $W$. Let $\mathbf{c} \in B(W)$ be a lift of $c$ such that $\mathbf{c}^h = \bm{\pi}$. Then
    \[
        \tau_q(T_{\mathbf{c}}^{-p}) = q^{-np}(1-q)^n \operatorname{Cat}_p(W; q).
    \]
\end{theorem}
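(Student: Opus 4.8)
The plan is to compute $\tau_q(T_{\mathbf{c}}^{-p})$ by expanding it in the irreducible characters of the Hecke algebra $H = H_q(W)$. Granting the standard conjectures that make $H$ a symmetric algebra with canonical symmetrizing form $\tau_q$ and split semisimple after extension of scalars, one has $\tau_q = \sum_{\chi \in \mathrm{Irr}(H)} s_\chi(q)^{-1}\chi$, where the Schur elements $s_\chi(q)$ are known explicitly for spetsial $W$ (Malle). Hence
\[
    \tau_q(T_{\mathbf{c}}^{-p}) = \sum_{\chi \in \mathrm{Irr}(H)} \frac{\mathrm{tr}\!\left(T_{\mathbf{c}}^{-p}\mid V_\chi\right)}{s_\chi(q)},
\]
and the problem splits into (i) computing each trace and (ii) evaluating the resulting character sum.

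For (i): since $W$ is well generated, the centre of $B(W)$ is generated by $\bm{\pi}$, so $T_{\bm{\pi}}$ is central in $H$ and acts on $V_\chi$ by a scalar $\omega_\chi(q)$, which the spetsial framework identifies with an explicit monomial $q^{N_\chi}$ (up to a root of unity), with $N_\chi$ determined by the reflection data. From $\mathbf{c}^h = \bm{\pi}$ we get $(T_{\mathbf{c}}|_{V_\chi})^h = \omega_\chi(q)\cdot\mathrm{id}$, so $T_{\mathbf{c}}|_{V_\chi}$ is semisimple and each of its eigenvalues is $q^{N_\chi/h}$ times an $h$-th root of unity. To identify these roots of unity with multiplicity, I would combine the Springer theory of $\zeta_h$-regular elements (Lehrer--Springer, Lehrer--Michel) with its Hecke-algebra $q$-deformation: specialising $q\to 1$ carries $T_{\mathbf{c}}$ to the genuine $\zeta_h$-regular element $c$ acting on the $W$-module $V_\chi$, whose spectrum is $\{\zeta_h^{m}\}$ as $m$ runs over the exponents of $\chi$ recorded by $\Feg_\chi$. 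Propagating this through the deformation yields
\[
    \mathrm{tr}\!\left(T_{\mathbf{c}}^{-p}\mid V_\chi\right) = q^{-p N_\chi/h}\,\Feg_\chi\!\left(\zeta_h^{-p}\right),
\]
up to the usual conventional ambiguities ($\Feg_{\chi^*}$ versus $\Feg_\chi$, sign of the exponent). Individual summands may carry fractional powers of $q$, but the total must be a genuine Laurent polynomial.

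Substituting back reduces the theorem to the identity
\[
    \sum_{\chi \in \mathrm{Irr}(W)} \frac{q^{-p N_\chi/h}\,\Feg_\chi\!\left(\zeta_h^{-p}\right)}{s_\chi(q)} = q^{-np}(1-q)^n\prod_{i=1}^{n}\frac{[p + (pe_i \mod h)]_q}{[d_i]_q}.
\]
In \cite{galashin2022rational} the left side of the corresponding identity was interpreted as the $\mathbb{F}_q$-point count of a braid Richardson variety for $\mathbf{c}^p$ and evaluated through Deligne--Lusztig theory. Here there is no such variety, so the identity must be proved directly from the combinatorial spetsial data: either uniformly, via an Opdam/Molien-type identity collapsing the character sum to a product over the degrees, or by a case analysis over the Shephard--Todd types using Malle's explicit Schur elements, fake degrees, and $\bm{\pi}$-scalars. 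The main obstacle is exactly this step, together with the spectral description in (i): absent the cohomology of Deligne--Lusztig varieties, controlling both the scalar $\omega_\chi(q)$ and the assertion that the $q$-deformed regular element has precisely the Springer-predicted eigenvalues requires the full strength of the conjectural spetsial package --- combinatorial unipotent degrees, Harish-Chandra theory, and the Lusztig--Fourier transform --- after which one still must verify that the weighted character sum telescopes to the rational $q$-Catalan product.
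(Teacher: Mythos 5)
Your setup is sound and matches the paper's first reduction: expanding $\tau_q$ in Schur elements and evaluating $\chi_q(T_{\mathbf{c}}^{-p})$ via the central full twist and Springer's theory of regular elements is exactly \Cref{thm:traceRootOfTwist}, which gives
\[
\tau_q(T_{\mathbf{c}}^{-p}) \;=\; \frac{1}{P_W}\sum_{\chi \in \operatorname{Irr}(W)} q^{(h_\chi - nh)p/h}\,\Feg_\chi\!\left(e^{2\pi i p/h}\right)\operatorname{Deg}_\chi(q),
\]
and your sign/convention caveats are harmless. But at that point you stop: you state that the resulting character-sum identity ``must be proved directly from the combinatorial spetsial data,'' call it the main obstacle, and offer only the vague alternatives of an Opdam/Molien-type collapse or a type-by-type verification. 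That unproved identity \emph{is} the theorem, so as written the proposal has a genuine gap rather than a proof.

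What the paper actually does at this step consists of two concrete ingredients you do not supply. First, the Fourier-transform pairing (\Cref{lem:transform}, resting on \Cref{conj:transform} and its known cases) is used to \emph{swap the arguments} of the fake and generic degrees: since the pairing is symmetric, relates $\operatorname{Deg}$ to $\Feg$ within families, and families have constant $h_\chi$, one may replace $\Feg_\chi(\zeta_h^{p})\operatorname{Deg}_\chi(q)$ by $\Feg_\chi(q)\operatorname{Deg}_\chi(\zeta_h^{p})$ in the sum. Second, the vanishing theorem \Cref{thm:galDeg} identifies exactly where the generic degree survives evaluation at $\zeta_h^{p}$: it is nonzero only for the exterior powers $\Lambda^k V^{\sigma_p}$ of the Galois-twisted reflection representation, where it equals $(-1)^k$; combined with $h_{\chi_{k,p}} = kh$ (\Cref{ex:galCox}) and the fake degrees of $\Lambda^k V^{\sigma_p}$ (\Cref{ex:galoisFeg}), the sum reduces to $n+1$ terms that factor as $q^{-np}\prod_{i=1}^n\bigl(1-q^{\,p+e_i(V^{\sigma_p})}\bigr)$, and $e_i(V^{\sigma_p}) \equiv p e_i \bmod h$ yields the rational $q$-Catalan product. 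Your mention of ``the Lusztig--Fourier transform'' gestures in the right direction, but without the swap lemma and the explicit vanishing/evaluation results the telescoping you hope for is not established; this is precisely the content your proposal defers.
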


\subsection{Parking combinatorics}

The \defn{noncrossing parking functions} in type $A$, called 2-partitions in \cite{edelman1980chain}, are sets of tuples $\{(B_1,L_1),\dots,(B_k,L_k)\}$, where $B_i, L_i \subseteq \{1,\dots,n\}$ and
\begin{itemize}
    \item $\{B_1,\dots,B_k\}$ is a noncrossing partition of $\{1,\dots,n\}$,

    \item $\{L_1,\dots,L_k\}$ is a set partition of $\{1,\dots,n\}$, and 

    \item $|B_i| = |L_i|$ for $i = 1,\dots,k$.
\end{itemize}
The number of these noncrossing parking functions is $(n+1)^{n-1}$.

More generally, for $W$ a finite Coxeter group, let $\mathcal{L}$ be the lattice of \defn{flats}. That is, the elements of $\mathcal{L}$ are the intersections of collections of reflection hyperplanes, and they are ordered by reverse inclusion. Then by \cite{brady2002partial} there is an embedding $\mathrm{NC}(W,c) \hookrightarrow \mathcal{L}$. Define an equivalence relation on $W \times \mathcal{L}$ by setting $(w,X) \sim (w',X')$ if $X = X'$ and $wW_X = w'W_X$, where $W_X$ is the pointwise stabilizer of $X$. Denote by $[w,X]$ the equivalence class of $(w,X)$. Then the \defn{$W$-noncrossing parking functions} defined in \cite{armstrong2015parking} are
\[
    \operatorname{Park}^{\mathrm{NC}}_W := \{[w,X] : w \in W \text{ and } X \in \mathrm{NC}(W,c)\},
\]
and they are counted by $(h+1)^n$.

In type $A$, the \defn{rational nonnesting parking functions} for $a$ and $b$ coprime are rational Dyck paths in an $a \times b$ rectangle with a labelling of the north steps with the numbers $\{1,\dots,a\}$, such that the labels increase in each column going north. These parking functions are counted by $b^{a-1}$ \cite[Corollary 4]{armstrong2016rational}.  As with rational Catalan objects, nonnesting parking objects again are only defined for Weyl groups.
\begin{figure}[h!]
    \begin{center}
        \begin{tikzpicture}[scale = 0.5]
            \draw[thin] (0,0) grid (7,5);
            \draw[thin, dotted] (0,0) -- (7,5);
            \draw[ultra thick] (0,0) -- (0,1) |- (0,2) |- (1,2) |- (1,3) |- (1,4) |- (2,4) |- (3,4) |- (4,4) |- (4,5) |- (5,5) |- (6,5) |- (7,5);
            \node at (0.5,0.5) {2};
            \node at (0.5,1.5) {4};
            \node at (1.5,2.5) {1};
            \node at (1.5,3.5) {5};
            \node at (4.5,4.5) {3};
        \end{tikzpicture}
    \end{center}
    \caption{A $(5,7)$-parking function}
\end{figure}
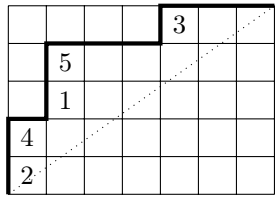

In \cite{galashin2022rational}, the authors uniformly defined rational noncrossing parking objects for finite Coxeter groups. These parking objects correspond to certain walks in the Hasse digram of the weak Bruhat order and are counted by $p^n$. As a corollary of our main result, we prove the following (see \Cref{cor:parking}):
\begin{corollary}
    For $W$ an irreducible spetsial complex reflection group, let $\mathcal{B}$ be a basis of the spetsial Hecke algebra $\mathcal{H}_q(W)$ (adapted to the Wedderburn decomposition), and let $\mathbf{c}$ be a lift of a $\zeta_h$-regular element such that $\mathbf{c}^h = \mathbf{\pi}$. Then
    \[
        \sum_{b \in \mathcal{B}} \tau_q(b^\vee T_{\mathbf{c}^p} b) = (q-1)^n [p]_q^n.
    \]
\end{corollary}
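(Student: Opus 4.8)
The plan is to remove the sum over $\mathcal{B}$ using the fact that $\sum_{b\in\mathcal{B}} b^\vee x\,b$ is a central element of $\mathcal{H}_q(W)$, thereby reducing the corollary to an evaluation of the regular character of $\mathcal{H}_q(W)$, and then to obtain that evaluation from the values of the irreducible characters on powers of $T_{\mathbf{c}}$ --- the same data used to prove \Cref{thm:main}.

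\emph{Reduction to the regular character.} Since $W$ is spetsial, $\mathcal{H}_q(W)$ is split semisimple over the relevant extension $K$ of $\mathbb{C}(q)$, so the canonical trace decomposes as $\tau_q=\sum_\chi s_\chi^{-1}\chi$ over the irreducible characters, with Schur elements $s_\chi\in K^\times$, and $\tau_q(e_\chi)=\chi(1)/s_\chi$ for the central primitive idempotent $e_\chi$. The central element $\sum_{b\in\mathcal{B}} b^\vee x\,b$ (with $b\mapsto b^\vee$ the $\tau_q$-dual basis) depends only on the form $\tau_q$ and not on the choice of $\mathcal{B}$, so one may compute it with $\mathcal{B}$ a union of matrix-unit bases adapted to the Wedderburn decomposition, where a short calculation gives
\[
    \sum_{b\in\mathcal{B}} b^\vee x\, b \;=\; \sum_\chi s_\chi\,\chi(x)\,e_\chi \;\in\; Z(\mathcal{H}_q(W)).
\]
Applying $\tau_q$ and using $\tau_q(e_\chi)=\chi(1)/s_\chi$ yields $\sum_{b\in\mathcal{B}}\tau_q(b^\vee x\,b)=\sum_\chi\chi(1)\,\chi(x)$, which is the regular character $\rho_{\mathcal{H}}$ of $\mathcal{H}_q(W)$ evaluated at $x$. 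Since $T_{\mathbf{c}^p}=T_{\mathbf{c}}^p$ and $(q-1)^n[p]_q^n=(q^p-1)^n$, the corollary becomes the identity
\[
    \rho_{\mathcal{H}}(T_{\mathbf{c}}^p)\;=\;\sum_\chi\chi(1)\,\chi(T_{\mathbf{c}}^p)\;=\;(q^p-1)^n .
\]

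\emph{Evaluating the regular character.} I would compute each $\chi(T_{\mathbf{c}}^p)$ from the behavior of $T_{\mathbf{c}}$ at a regular element, exactly as in the proof of \Cref{thm:main}. Because $\bm{\pi}$ generates the center of $B(W)$, the full twist $T_{\bm{\pi}}=T_{\mathbf{c}}^h$ acts on each $V_\chi$ by a scalar $\omega_\chi$, so every eigenvalue of $T_{\mathbf{c}}$ on $V_\chi$ is an $h$-th root of $\omega_\chi$; and since $c$ is $\zeta_h$-regular, the $q \to 1$ specializations of these eigenvalues are exactly the eigenvalues of $c$ on the corresponding $W$-module (the $q$-deformation of Springer's theory of $\zeta_h$-regular elements, available in the spetsial setting). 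Summing the resulting expressions against the weights $\chi(1)$ --- which are the values $f_\chi(1)$ of the fake degrees $f_\chi(q)$ --- should then be carried out with the same fake-degree/cyclic-sieving bookkeeping that produces the rational Catalan numerator in \Cref{thm:main}: the $\chi(1)$-weighted sum is a $q$-refinement of the ordinary regular trace $\sum_\chi\chi(1)\chi_W(c^p)=\operatorname{tr}(c^p \mid \mathbb{C}[W])$, and its upshot should be that the sum collapses to $(q^p-1)^n$, the exponent $n$ arising from the $n$ degrees $d_i$ (equivalently the $n$ eigenvalues of $c$ on the reflection representation) and the needed sign cancellations coming from $\sum_{j=1}^{h-1}\zeta_h^{jp}=-1$, which uses $\gcd(p,h)=1$.

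\emph{Main obstacle.} The crux is this last collapse. In the finite Coxeter setting of \cite{galashin2022rational} the analogous identity $\rho_{\mathcal{H}}(T_{\mathbf{c}}^p)=(q^p-1)^n$ is obtained geometrically, as a point count over $\mathbb{F}_q$ of a braid Richardson variety fibered over a torus; for spetsial complex reflection groups no such variety exists, so the identity must be extracted purely from combinatorial representation theory. One must separately marshal split semisimplicity of $\mathcal{H}_q(W)$ for generic $q$ together with the explicit Schur elements, the scalar by which $\bm{\pi}$ acts on each $V_\chi$, the eigenvalues of $T_{\mathbf{c}}$ on each $V_\chi$ (the $q$-analog of Springer's correspondence for regular elements), and the fake-degree identity turning the $\chi(1)$-weighted sum into a product. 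Each of these is among the structural features that hold for spetsial --- but not for arbitrary well-generated --- complex reflection groups, so the real work is to assemble them uniformly and to track the bookkeeping of $q$-powers and roots of unity, with explicit computations in small cases as a safeguard against sign and normalization errors.
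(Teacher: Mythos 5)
Your first step is correct and is essentially the paper's own reduction: the identity $\sum_{b\in\mathcal{B}} b^\vee x b = \sum_\chi S_\chi(q)\,\chi_q(x)\,e_\chi$ together with $\tau_q(e_\chi)=\chi_q(1)/S_\chi(q)$ (equivalently, the symmetric-algebra fact cited from Geck--Pfeiffer) reduces the corollary to the regular-character identity $\sum_{\chi\in\operatorname{Irr}(W)}\chi(1)\,\chi_q(T_{\mathbf{c}^p})=(q^p-1)^n$.

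The gap is that you never actually establish this collapse, and you say so yourself ("the crux is this last collapse"). The heuristics you offer --- eigenvalues of $T_{\mathbf{c}}$ as $h$-th roots of the scalar of $T_{\bm{\pi}}$, their $q\to 1$ specialization to eigenvalues of $c$, and the sum $\sum_j\zeta_h^{jp}=-1$ --- do not produce the identity; indeed the $q\to 1$ limit of the left side is just the regular character of $W$ at $c^p$, which carries none of the needed $q$-structure. The mechanism the paper uses is different and specific: \Cref{cor:charRootOfTwist} gives $\chi_q(T_{\mathbf{c}^p})=q^{(nh-h_\chi)p/h}\operatorname{Feg}_\chi(e^{-2\pi i p/h})$; one then writes $\chi(1)=\operatorname{Deg}_\chi(1)$ and invokes the Fourier pairing of \Cref{lem:transform} (properties (T1)--(T3), i.e.\ \Cref{conj:transform}, so the corollary inherits the same conjectural status for $G(m,m,n)$) to exchange the evaluation points of $\operatorname{Feg}$ and $\operatorname{Deg}$; finally \Cref{thm:galDeg} kills every term except the exterior powers of the Galois twist $V^{\sigma_{-p}}$, where $\operatorname{Deg}=(-1)^k$, $\operatorname{Feg}_\chi(1)=\binom{n}{k}$, and $h_\chi=kh$ (\Cref{ex:galCox}), leaving $\sum_{k=0}^n(-1)^k\binom{n}{k}q^{p(n-k)}=(q^p-1)^n$. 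That some Fourier-type input is genuinely required, and not just "bookkeeping," is shown by the paper's remark that the analogous trace identities fail for well-generated non-spetsial groups, where no such pairing exists; so without identifying this step your proposal is incomplete at its decisive point.
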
 
In the real case, this is a key algebraic step in the proof of the enumeration of rational noncrossing parking functions \cite[Corollary 6.15]{galashin2022rational}. Finding a combinatorial interpretation of the left-hand-side of this equation, e.g.\ rational noncrossing parking functions for spetsial complex reflection groups, is an open problem.

\subsection{Outline of the paper}
In \Cref{sec:ComplexRefl} and \Cref{sec:braidHecke} we cover the necessary background on complex reflection groups, braid groups, and Hecke algebras. A key result in these sections is Tits' deformation theorem (\Cref{thm:tits}) which links the representation theory of a complex reflection group with that of its Hecke algebra. We also show in \Cref{thm:galDeg} that the exterior powers of Galois twists of the reflection representation are precisely where the generic degrees fail to vanish when evaluated at a root of unity. Some data associated to these representations is presented throughout the paper as a running example.

Then, in \Cref{sec:FGOLT} we give some brief background on the representation theory of Lie type to motivate the constructions in \Cref{sec:spetses} and \Cref{sec:LFT}. In particular, we introduce the uniform almost characters, principal series characters, and families of unipotent characters.

The construction of unipotent characters and generic degrees for the infinite families are then described in \Cref{sec:spetses}. We also highlight the role of Clifford theory in describing the representation theory of $G(m,m,n)$. There is a focus throughout this section on computing data for the exterior powers of Galois twists. In particular, the proof of \Cref{thm:galDeg} for the infinite families appears in this section.

The key lemma for our main result is the focus of \Cref{sec:LFT}. We describe Lusztig's original construction of a Fourier transform for Weyl groups, then discuss generalizations to finite Coxeter groups and spetsial complex reflection groups. Finally, in \Cref{sec:ratioCatalan} we present our main result and a corollary related to parking combinatorics.

For the exceptional groups, some of our results are checked by computer \cite{Code}.

\subsection*{Acknowledgments} The author would like to thank Nathan Williams for his guidance, support, and many helpful discussions.

\section{Complex reflection groups}
\label{sec:ComplexRefl}

Most of the background material in this section can be found in~\cite{lehrer2009unitary}. Let $V$ be a finite-dimensional complex vector space. A linear transformation $g \in \GL(V)$ is a \defn{reflection} if the order of $g$ is finite and the subspace $\Fix(g) := \{v \in V : gv = v\}$ has codimension 1. In this case, $\Fix(g)$ will be called the \defn{reflection hyperplane} of $g$. 

\begin{definition}
A \defn{(finite) complex reflection group} is a finite subgroup of $\GL(V)$ that is generated by reflections. 
\end{definition}

For a complex reflection group $W$, we will use $\mathcal{R}$ to denote the set of reflections in $W$, and $\mathcal{A}$ will denote the corresponding set of reflecting hyperplanes (for Coxeter groups, $|\mathcal{R}| = |\mathcal{A}|$).

For any finite subgroup $G$ of $\GL(V)$, there exists a $G$-invariant inner product on $V$. So we may assume that a complex reflection group $G \subset \GL(V)$ is a subgroup of the unitary group $\U(V)$ for an appropriate inner product. Moreover, one can show that finite subgroups of $\U(V)$ are conjugate in $\U(V)$ if and only if they are conjugate in $\GL(V)$.

\begin{definition}
    A complex reflection group $W \subset \U(V)$ is an \defn{irreducible complex reflection group} if $V$ is an irreducible $W$-module.
\end{definition}

For an orbit of hyperplanes $\mathcal{C} \in \mathcal{A}/W$, we will let $e_\mathcal{C}$ denote the order of the pointwise stabilizer $W_H = \{w \in W : wh = h, \,\, \forall h \in H\}$ for any $H \in \mathcal{C}$ (the order does not depend on the choice of $H$). For any $H \in \mathcal{C}$, the group $W_H$ is cyclic with order $e_{\mathcal{C}}$, and there is a reflection $s_H \in W$ with reflecting hyperplane $H$ and determinant $\zeta_{e_\mathcal{C}} = \exp(2\pi i / e_{\mathcal{C}})$. Such reflections are called \defn{distinguished reflections}.

The \defn{field of definition} $k_W$ of a complex reflection group $W$ is the field generated by the traces of the elements of $W$ on the reflection representation. The field of definition is a subfield of $\mathbb{R}$ when $W$ is a finite Coxeter group and equals $\mathbb{Q}$ when $W$ is a Weyl group. It is a theorem of \cite{benard1976schur} and \cite{bessis1997corps} that every representation of $W$ is definable over its field of definition.

\subsection{Classification of complex reflection groups}

Let $\mu_m$ denote the cyclic subgroup of $\mathbb{C}^\times$ consisting of the $m$th roots of unity. If $G$ is a group acting on the set $\{1,2,\dots,n\}$ and $V$ is an $n$-dimensional complex inner product space with orthonormal basis $e_1,\dots,e_n$, the \defn{standard monomial representation} of $\mu_m \wr G$ is given by
\[
    (h,g)e_i = h_{g(i)}e_{g(i)},
\]
where $h = (h_1,\dots,h_n) \in \mu_m^n$.

For $p$ a divisor of $m$, let $A(m,p,n) = \{(h_1,\dots,h_n) \in \mu_m^n : (h_1\cdots h_n)^{m/p} = 1$\}. Let $G(m,p,n) = A(m,p,n) \rtimes S_n$. The group $G(m,p,n)$ is a subgroup of $\mu_m \wr S_n$, so it may be represented as a group of linear transformations in the standard monomial representation.

\begin{definition}
    A complex reflection group $W \subset U(V)$ is \defn{imprimitive} if for some $m > 1$, there is a direct sum $V = V_1 \oplus V_2 \oplus \cdots \oplus V_m$ of non-zero subspaces $V_i$ such that the action of $W$ on $V$ permutes the subspaces $V_1$, $V_2$, \dots, $V_m$. If $W$ is not imprimitive, then it is \defn{primitive}.
\end{definition}

\begin{proposition}[\cite{lehrer2009unitary}, Proposition 2.10]
    The groups $G(m,p,n)$ with $n > 1$ are imprimitive complex reflection groups. $G(m,p,n)$ is irreducible except for the groups $G(1,1,n)$ and $G(2,2,2)$
\end{proposition}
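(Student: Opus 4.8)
The statement has three parts and the plan is to treat them in turn: first, that $G(m,p,n)$ is generated by reflections; second, that for $n>1$ it is imprimitive; third, that the standard monomial module $V=\mathbb{C}^n$ is irreducible exactly when $(m,p,n)$ is neither $(1,1,n)$ nor $(2,2,2)$. The imprimitivity is immediate and I would note it first: the $n\ge2$ coordinate lines $\mathbb{C}e_1,\dots,\mathbb{C}e_n$ are nonzero, their sum is $V$, and every $(h,g)\in G(m,p,n)$ carries $\mathbb{C}e_i$ onto $\mathbb{C}e_{g(i)}$; so $W$ permutes this system of lines, which is precisely the defining condition for imprimitivity.

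For the reflection-generation part I would write down two explicit families of reflections inside $G(m,p,n)$ and check they generate. For $i\neq j$ and $\zeta\in\mu_m$, the order-two map $\tau_{ij}^\zeta$ given by $e_i\mapsto\zeta e_j$, $e_j\mapsto\zeta^{-1}e_i$, fixing the other $e_k$, fixes the hyperplane spanned by $e_i+\zeta e_j$ and the $e_k$ with $k\neq i,j$ and negates $e_i-\zeta e_j$, so it is a reflection; its monomial tuple has coordinate product $1$, hence $\tau_{ij}^\zeta\in G(m,p,n)$. When $p<m$, the diagonal matrix $\operatorname{diag}(\zeta_m^p,1,\dots,1)$ is a reflection whose coordinate product $\zeta_m^p$ satisfies $(\zeta_m^p)^{m/p}=1$, so it also lies in $G(m,p,n)$. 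Now $\tau_{ij}^1=(i\,j)$, so the first family already generates the factor $S_n$; for $i<j$ the product $\tau_{ij}^{\zeta_m}\cdot(i\,j)$ equals $\operatorname{diag}(\zeta_m^{\mathbf a})$ with $\mathbf a=-\mathbf e_i+\mathbf e_j$, so the first family together with $S_n$ generates $\{\operatorname{diag}(\zeta_m^{\mathbf a}):\sum_k a_k\equiv0\pmod m\}$. Identifying $A(m,p,n)$ with $\{\operatorname{diag}(\zeta_m^{\mathbf a}):\sum_k a_k\equiv0\pmod p\}$, this is already all of $A(m,p,n)$ when $p=m$, and when $p<m$ one enlarges it to $A(m,p,n)$ by adjoining the second-family reflection, an order count ($m^n/p=|A(m,p,n)|$) confirming equality. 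With $S_n$ this recovers $A(m,p,n)\rtimes S_n=G(m,p,n)$, so $W$ is a complex reflection group.

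For the irreducibility dichotomy the crux is the diagonal subgroup $A:=A(m,p,n)=\{\operatorname{diag}(\zeta_m^{\mathbf a}):\sum_k a_k\equiv0\pmod p\}$: say a pair $i\neq j$ is \emph{separated} by $A$ if some $\mathbf a\in A$ has $a_i\neq a_j$ in $\mathbb{Z}/m$. A short check shows $A$ separates every pair except when $m=1$ (no freedom at all) or $(m,p,n)=(2,2,2)$ (where $A=\{\pm I\}$): for $m\ge3$ take $a_i=1$, $a_j=m-1$; for $m=2$ take $a_i=1$, $a_j=0$, which when $p=2$ needs a third index $k$ with $a_k=1$, available exactly when $n\ge3$. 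If $A$ separates all pairs, its joint eigenspaces on $V$ are the individual coordinate lines, so every $A$-invariant subspace is a coordinate subspace $\bigoplus_{i\in S}\mathbb{C}e_i$; a nonzero $G(m,p,n)$-invariant subspace is also $S_n$-stable, forcing $S=\{1,\dots,n\}$, so $U=V$ and $V$ is irreducible. In the two exceptional cases $A$ acts by scalars and the standard decomposition $\mathbb{C}^n=\mathbb{C}(e_1+\dots+e_n)\oplus\{\,\sum_i x_i=0\,\}$ is preserved (by $S_n=G(1,1,n)$ in the first case, and one checks it directly for $G(2,2,2)$), so $V$ is reducible there.

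The whole proof is elementary bookkeeping; the one genuinely delicate point---which I would regard as the main obstacle---is the separation analysis for $m=2$, since the two sporadic reducible groups $G(1,1,n)$ and $G(2,2,2)$ surface precisely as its degenerate boundary cases, so one must check the small configurations ($p=1$ versus $p=2$, and $n=2$ versus $n\ge3$) by hand to be certain the exception list is exactly right.
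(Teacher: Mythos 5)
Your proof is correct, and since the paper gives no argument for this proposition (it is quoted from Lehrer--Taylor, Proposition 2.10), the relevant comparison is with the standard proof there, which yours essentially reproduces: imprimitivity via the permuted coordinate lines $\mathbb{C}e_1,\dots,\mathbb{C}e_n$, generation by the explicit reflections $\tau_{ij}^{\zeta}$ and $\operatorname{diag}(\zeta_m^p,1,\dots,1)$ recovering $A(m,p,n)\rtimes S_n$, and irreducibility via the weight-space (separation) analysis of the diagonal subgroup. The delicate boundary cases $G(1,1,n)$ and $G(2,2,2)$ are identified and handled correctly.
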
 

\begin{proposition}[\cite{lehrer2009unitary}, Proposition 2.14]
    If $V$ is an $n$-dimensional complex inner product space and $G$ is an irreducible imprimitive subgroup of $U(V)$, which is generated by reflections, then $n > 1$ and $G$ is conjugate to $G(m,p,n)$ for some $m > 1$ and some divisor $p$ of $m$.
\end{proposition}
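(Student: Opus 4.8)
The plan is the classical \emph{monomialization} of an imprimitive reflection group: realize $G$ as a group of generalized permutation matrices and read off the parameters $m,p$. I would first observe that, since $V$ is irreducible and $G$ permutes $\{V_1,\dots,V_m\}$, that permutation action is transitive (otherwise the span of one orbit would be a proper nonzero $G$-submodule), so all blocks share a dimension $d=n/m$; and since $G$ is generated by reflections while the block action is nontrivial ($m>1$), some reflection $r$ must move some block. Solving $rv=v$ for $v=\sum_j v_j$ cycle-by-cycle along the induced permutation $\bar r$ of the blocks gives $\dim\Fix(r)\le td$, where $t$ is the number of cycles of $\bar r$; as $\bar r\ne 1$ we have $t\le m-1$, so $n-1=\dim\Fix(r)\le(m-1)d=n-d$, forcing $d=1$. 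Hence $m=n$ (in particular $n>1$), each $V_i$ is a line, and choosing unit vectors $e_i\in V_i$ exhibits $G$ as a group of monomial matrices with an exact sequence $1\to A\to G\xrightarrow{\pi}\bar G\to 1$, $A=G\cap(\text{diagonal})$, $\bar G\le S_n$ transitive. Rerunning the count with $d=1$ shows each reflection is either a \emph{diagonal reflection} $\operatorname{diag}(1,\dots,\zeta,\dots,1)$ with $\zeta\ne1$ a root of unity, or acts on some $V_i\oplus V_j$ as $\left(\begin{smallmatrix}0&b^{-1}\\ b&0\end{smallmatrix}\right)$ and as the identity on the other lines (the relation ``the off-diagonal entries multiply to $1$'' being exactly what produces the eigenvalue $1$); since the latter reflections map onto transpositions and $\bar G$ is transitive, $\bar G=S_n$.

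Next I would let $m$ be the least common multiple of the orders of the nonzero entries of all elements of $G$, so $G\subseteq\mu_m\wr S_n$ and $A\subseteq\mu_m^n$. The map $\delta\colon\mu_m\wr S_n\to\mathbb{C}^\times$ sending a monomial matrix to the product of its nonzero entries is a homomorphism ($\delta(g)=\operatorname{sgn}(\pi(g))\det g$); by the classification of reflections it kills every transposition-type reflection and sends $\operatorname{diag}(1,\dots,\zeta,\dots,1)$ to $\zeta$, so $\delta(G)=\mu_{m/p}$ for some divisor $p\mid m$. Since $\ker\delta=G(m,m,n)$ and $\delta^{-1}(\mu_{m/p})=G(m,p,n)$ inside $\mu_m\wr S_n$, we get $G\subseteq G(m,p,n)$ for free, and the whole problem reduces to the reverse inclusion $G(m,m,n)\subseteq G$.

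To prove $G(m,m,n)\subseteq G$ it suffices to show $(\alpha,\alpha^{-1},1,\dots,1)\in A$ for every $\alpha\in\mu_m$, since these elements and their $S_n$-translates generate $G(m,m,n)$. Here irreducibility enters: each coordinate character $\chi_i\colon A\to\mathbb{C}^\times$, $\chi_i(a)=a_i$, appears in $V|_A$, and (by Clifford/Mackey, a monomial module over $A\rtimes S_n$ with $\bar G=S_n$ being irreducible precisely when its coordinate characters are distinct) irreducibility of $V$ forces the $\chi_i$ to be pairwise distinct, so $A$ lies in no subgroup $\{a_i=a_j\}$. Feeding this, together with the $S_n$-stability of $A$ (from $\bar G=S_n$) and the elements $(\beta,\beta^{-1},1,\dots,1)\in A$ obtained as products of two transposition-type reflections over one transposition, into a short generation argument produces all of $G(m,m,n)$, once one checks that the $\beta$'s so produced exhaust $\mu_m$. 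This yields $G=G(m,p,n)$; and $m>1$, since $G(1,1,n)=S_n$ acting by permutation matrices is reducible whereas $G$ is not (the ``conjugate to'' in the statement is supplied by the orthonormal change of basis to the $e_i$).

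The main obstacle is this last step, $G(m,m,n)\subseteq G$: the dimension counts of the first two moves are routine, but reconciling the a priori definition of $m$ (via orders of matrix entries) with the assertion that the antidiagonal pairs occurring in $A$ run over all of $\mu_m$ takes genuine care, and it is exactly here that irreducibility is indispensable — it is what rules out the reducible degenerate cases $G(1,1,n)$ and $G(2,2,2)$, in which two of the characters $\chi_i$ coincide.
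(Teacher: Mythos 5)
The paper does not prove this statement---it is quoted directly from Lehrer--Taylor \cite{lehrer2009unitary}---so your proposal can only be measured against the standard argument it is clearly modeled on. The first half of your outline is sound and is indeed the classical route: transitivity of the block action, the cycle-counting bound $\dim\Fix(r)\le td$ forcing $d=1$, the dichotomy ``diagonal reflection versus transposition-type reflection,'' and $\bar G=S_n$ because a transitive subgroup of $S_n$ generated by transpositions is all of $S_n$. But there are two genuine gaps after that. First, your parameter $m$ (``the lcm of the orders of the nonzero entries of all elements of $G$'') is not yet well defined: choosing unit vectors $e_i\in V_i$ only makes the entries unimodular, not roots of unity. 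For example, conjugating $G(2,1,2)$ by $\operatorname{diag}(1,e^{i\theta})$ with $\theta/\pi$ irrational gives an irreducible imprimitive unitary reflection group whose transposition-type reflections have entries of infinite multiplicative order. One must first perform a further \emph{diagonal} unitary conjugation (equivalently, choose the phases of the $e_i$ carefully, e.g.\ normalizing reflections along a spanning tree of the ``transposition graph'' to genuine permutation matrices, so that $S_n\subseteq G$ and every entry lands in the finite group $A$); your closing remark that the conjugation in the statement is ``supplied by the orthonormal change of basis to the $e_i$'' is exactly where this is missed.

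Second, and more seriously, the step you yourself flag as the main obstacle---$G(m,m,n)\subseteq G$---is not proved, and the ingredients you propose to feed into a ``short generation argument'' are provably insufficient. An $S_n$-stable subgroup $A\subseteq\mu_m^n$ with pairwise distinct coordinate characters whose coordinates generate $\mu_m$ need \emph{not} contain all antidiagonal pairs: in $\mu_8^2$ the subgroup generated by $(\zeta_8,\zeta_8^3)$ and $(\zeta_8^3,\zeta_8)$ is $S_2$-stable, has distinct coordinate characters, has coordinates generating $\mu_8$, yet contains $(\beta,\beta^{-1})$ only for $\beta\in\mu_4$. So irreducibility (via distinctness of the $\chi_i$) is not the ingredient that closes the argument; what is actually needed is the analysis, absent from your sketch, that $A$ is generated by the diagonal reflections together with the elements $\operatorname{diag}(bc^{-1},b^{-1}c,1,\dots,1)$ coming from pairs of transposition-type reflections over a common edge (and their $S_n$-conjugates, available once the spanning-tree normalization puts the permutation matrices inside $G$), with $m$ then defined from these generators rather than a priori from ``all entries.'' Only after that does $\delta(G)=\mu_{m/p}$ pin down $p$ and give both inclusions. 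Relatedly, the Clifford/Mackey criterion you invoke is immediate only in the direction ``distinct coordinate characters $\Rightarrow$ irreducible''; the direction you use (irreducible $\Rightarrow$ distinct, i.e.\ ruling out $A$ scalar) again requires the reflection-generation analysis, which is precisely what handles the excluded cases $G(1,1,n)$ and $G(2,2,2)$.
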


The group $G(m,p,1)$ is the cyclic group of order $m/p$ and is an irreducible primitive complex reflection group. The group $G(1,1,n)$ is the symmetric group $S_n$. Let $W(A_{n-1})$ be the complex reflection group gotten by restricting the action of $G(1,1,n)$ to the hyperplane orthogonal to $e_1 + \cdots + e_n$. Then $W(A_{n-1})$ is irreducible, and it is primitive when $n \geq 5$. 

By the classification due to Shephard and Todd, an irreducible complex reflection group is conjugate in $\U(V)$ to a group $G(m,p,n)$ or one of 34 exceptional (primitive) groups (see \cite[\S 8.7]{lehrer2009unitary}), which are numbered $G_4,G_5,\dots,G_{37}$.

\subsection{Degrees and exponents}

For a complex reflection group $W$ with reflection representation $V$ of dimension $n$, let $S := \text{Sym}(V^*) \simeq \mathbb{C}[x_1,\dots,x_n]$ be the symmetric algebra of $V^*$. Then $W$ acts on $S$ via $(g\cdot p)(v) = p(g^{-1}v)$ for $g \in W$, $p \in S$, and $v \in V$. By the Shephard-Todd-Chevalley theorem (see \cite{chevalley1955invariants}) the \defn{algebra of invariants} $S^W$ is generated by a collection of algebraically independent homogeneous polynomials. The \defn{degrees} of $W$ are defined to be the degrees $d_1 \leq \cdots \leq d_n$ of these generators. The Poincar\'{e} polynomial $P_W$ is defined by 
\[
    P_W := \prod_{j=1}^n [d_j]_q,
\]
where $[n]_q$ denotes the $q$-analog $[n]_q := (q^n - 1)/(q-1)$ for $n \in \mathbb{Z}$.

Let $S^W_+$ be the ideal of $S$ generated by the positive degree elements of $S^W$. Then the action of $W$ on the \defn{coinvariant algebra} $S / S^W_+$ is the regular representation, so $S / S^W_+$ contains exactly $r$ copies of any irreducible representation $M$ of $W$ of dimension $r$ (see \cite{lehrer2009unitary} Section 3.6). The \defn{exponents} of $M$ are defined to be the degrees $e_1(M) \leq \cdots \leq e_r(M)$ of the homogeneous components of $S/S^W_+$ containing a copy of $M$. If $\chi$ is the irreducible character corresponding to $M$, we will also denote $e_i(\chi) := e_i(M)$.

\begin{definition}
    The degrees of $W$ satisfy $d_i = e_i + 1$ for $i = 1,\dots,n$, where $e_i := e_i(V)$. One can define the \defn{codegrees} $d_1^* \geq \cdots \geq d_n^*$ of $W$ by $d_i^* = e_{n-i+1}(V^*) - 1$. An irreducible complex reflection group $W$ is \defn{well-generated} if the degrees and codegrees satisfy $d_i + d_i^* = d_n$. Equivalently, $W$ is well-generated if and only if it can be generated by $n$ reflections.
\end{definition}

It is a theorem of Shephard and Todd (see \cite[Theorem 4.14]{lehrer1999reflection}) that the the degrees and exponents satisfy $|W| = d_1\cdots d_n$ and $|\mathcal{R}| = e_1 + \cdots + e_n$.

\begin{definition}
    Let $W$ be an irreducible complex reflection group. The \defn{Coxeter number} of $W$ is 
    \[
        h := \frac{|\mathcal{R}| + |\mathcal{A}|}{n}.
    \]
    If $W$ is well-generated, then $h = d_n$.
\end{definition}

More generally, following \cite{gordon2012catalan} and \cite{douvropoulos2018enumerating}, define the \defn{generalized Coxeter number} $h_\chi$ associated to a character $\chi$ to be the normalized trace of the central element $\sum_{r \in \mathcal{R}}(1-r)$. That is,
\[
    h_\chi = \frac{1}{\chi(1)}\sum_{r \in \mathcal{R}}(\chi(1) - \chi(r)) = |\mathcal{R}| - \frac{1}{\chi(1)}\sum_{r \in \mathcal{R}} \chi(r).
\]
Equivalently (see \cite[Remark 4.15]{douvropoulos2018enumerating}),
\[
    h_{\chi} = \frac{N(\chi) + N(\chi^*)}{\chi(1)},
\]
where $N(\chi)$ is the sum of the exponents of $\chi^*$ (see \cite[\S 4.B]{broue1997}). These generalized Coxeter numbers are integers by \cite[Corollary 4.16]{douvropoulos2018enumerating}, and $h_\phi = h$ when $\phi$ is the character of the reflection representation. In \cite{trinh2021hecke} and \cite{galashin2022rational}, the authors define the \defn{content} $c(\chi)$ of $\chi$ as
\[
    c(\chi) := \frac{1}{\chi(1)}\sum_{r \in \mathcal{R}} \chi(r) = |\mathcal{R}| - h_{\chi}.
\]

\begin{remark}
    For the symmetric groups, the content of a character $\chi$ is equal to the content of the partition of the integer partition $(\lambda_1 \geq \lambda_2 \geq \cdots \geq 0)$ corresponding to $\chi$. That is, 
    \[
        c(\chi) = \sum_{i=1}^\infty \sum_{j=1}^{\lambda_i} (j-i).
    \]
\end{remark}

\begin{example}[Galois twists]
\label{ex:galCox}
    Let $W$ be a well-generated irreducible complex reflection group with Coxeter number $h$ and reflection representation $V$ of dimension $n$. For well-generated groups, we have the containment $k_W \subseteq \mathbb{Q}(\zeta_h)$. Every element $\sigma \in \operatorname{Gal}(\mathbb{Q}(\zeta_h) / \mathbb{Q})$ acts as $\sigma_p: \zeta_h \mapsto \zeta_h^p$ for some $p$ coprime to $h$.  The \defn{Galois twist} $V^{\sigma_p}$ of $V$ is the irreducible representation of $W$ obtained by applying $\sigma_p$ to the matrices representing the elements $w \in W$ as linear operators on $V$. These Galois twists may not be distinct---different choices of $p$ may result in the same representation.  

    If $r \in \mathcal{R}$, then it has a unique eigenvalue $\zeta \neq 1$ (which is a root of unity) and all other eigenvalues are 1. One can check case-by-case that the order of $\zeta$ divides $h$ in the exceptional groups. In the infinite family $G(m,p,n)$, either $\zeta$ is an $(m/p)$th root of unity (in which case the order of $\zeta$ divides $h = \max\{(n-1)m, mn/p\}$) or $\zeta = -1$ (see \cite[Lemma 2.8]{lehrer1999reflection}). Let $\chi_{k,p}$ denote the character of the irreducible representation $\Lambda^k V^{\sigma_p}$ for $p$ coprime to $h$ and $k \in \{0,1,\dots,n\}$. Then
    \[
        \chi_{k,p}(r) = e_k(\bar{\zeta}, 1, \dots, 1) = \binom{n-1}{k-1} \bar{\zeta} + \binom{n-1}{k},
    \]
    where
    \[
        e_k(x_1,\dots,x_n) = \sum_{i_1 < \cdots < i_k} x_{i_1}\cdots x_{i_k}
    \]
    is the $k$th elementary symmetric polynomial on $n$ variables and 
    \[
        \bar{\zeta} = \sigma_p(\zeta) = \begin{cases}
            \zeta^p & \text{if } \zeta^h = 1, \\
            -1 & \text{if } \zeta = -1.
        \end{cases}
    \]
    Let $h_{k,p}$ be the generalized Coxeter number of $\chi_{k,p}$. Hence
    \begin{align*}
        h_{k,p} &= |\mathcal{R}| - \frac{1}{\binom{n}{k}} \sum_{\mathcal{C} \in \mathcal{A}/W} |\mathcal{C}| \sum_{j=1}^{e_\mathcal{C} - 1} \left(\binom{n-1}{k-1} \bar{\zeta}_{e_{\mathcal{C}}}^{j} + \binom{n-1}{k} \right) \\
        &= |\mathcal{R}| - \frac{1}{\binom{n}{k}} \sum_{\mathcal{C} \in \mathcal{A}/W} |\mathcal{C}| \left(- \binom{n-1}{k-1} + (e_{\mathcal{C}} - 1)\binom{n-1}{k} \right) \\
        &= |\mathcal{R}| - \frac{1}{\binom{n}{k}}\left(-\binom{n-1}{k-1}|\mathcal{A}| + \binom{n-1}{k}|\mathcal{R}| \right) \\
        &= |\mathcal{R}| + \frac{k}{n}|\mathcal{A}| - \left(1 - \frac{k}{n}\right)|\mathcal{R}| \\
        &= kh.
    \end{align*}
\end{example}

\begin{definition}
    A vector $v \in V$ is \defn{regular} if it is not contained in any reflection hyperplane. Let $V^{\text{reg}} := V \setminus \bigcup_{H \in \mathcal{A}} H$ denote the set of regular vectors. An element $c \in W$ is \defn{regular} if it has a regular eigenvector. Moreover, $c$ is \defn{$\zeta$-regular} if this eigenvector may be chosen to have eigenvalue $\zeta$. In this case, the multiplicative order $d$ of $\zeta$ is a \defn{regular number} for $W$.
\end{definition}

The following theorem was first proven case-by-case \cite[Theorem C]{lehrer1999reflection}, but was later proven uniformly \cite[Theorem 3.1]{lehrer2003invariant}.

\begin{theorem}
    A positive integer $d$ is a regular number of $W$ if and only if it divides the same number of degrees as codegrees.
\end{theorem}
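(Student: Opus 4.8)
The plan is to prove the two implications by rather different means: the ``only if'' direction falls out of Springer's theory of regular elements applied to both $V$ and $V^*$, while the ``if'' direction requires the invariant-theoretic input that makes the argument uniform, and is genuinely the hard one.

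\textbf{Only if.} Suppose $c$ is $\zeta$-regular. Replacing $c$ by a power $c^k$ with $\gcd(k,d)=1$ keeps the same regular eigenvector and replaces $\zeta$ by another primitive $d$-th root, so we may take $\zeta=e^{2\pi i/d}$; then $c$ has order $d$. Springer's theorem on regular elements says that such a $c$ acts on any $W$-module $M$ with eigenvalue multiset $\{\zeta^{-e} : e \text{ an exponent of } M\}$, where the exponents of $M$ are the degrees in which $M$ occurs in the coinvariant algebra $S/S^W_+$. Taking $M=V$, whose exponents are $\{d_i-1\}_{i=1}^n$, and $M=V^*$, whose exponents are the coexponents $\{d_i^*+1\}_{i=1}^n$ by the definition of the codegrees, we find that $c$ has eigenvalues $\{\zeta^{1-d_i}\}$ on $V$ and $\{\zeta^{-(d_i^*+1)}\}$ on $V^*$. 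Since the eigenvalue multiset of $c$ on $V^*$ is the inverse of that on $V$, comparing the multiplicity of the eigenvalue $\zeta^{-1}$ in the two descriptions gives
\[
    \#\{i : d \mid d_i\} \;=\; \dim V(c,\zeta) \;=\; \dim V^*(c,\zeta^{-1}) \;=\; \#\{i : d \mid d_i^*\}.
\]

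\textbf{If.} Write $a:=\#\{i:d\mid d_i\}=\#\{i:d\mid d_i^*\}$, and note $a\ge 1$ since $0$ is always a codegree (the radial vector field is $W$-invariant). By the Lehrer--Springer eigenspace theorem, $\max_{w\in W}\dim V(w,\zeta)=a$; fix $w$ attaining this maximum and set $E:=V(w,\zeta)$. It suffices to produce a regular vector in $E$, for then $w$ is $\zeta$-regular; and $E$ contains a regular vector exactly when its pointwise stabilizer $W_E$ is trivial (if $W_E=1$ then $E$ lies in no reflection hyperplane, and an irreducible variety is not covered by finitely many proper subvarieties). So suppose $W_E\neq 1$, put $X:=\Fix(W_E)\subsetneq V$ (so $W_X=W_E$). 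Then $w$ normalizes $W_E=W_X$, hence lies in $N_W(X)$, and its image $\bar w$ in the complex reflection group $\bar W:=N_W(X)/W_X$ acting on $X$ has $\zeta$-eigenspace equal to $E$; maximality of $E$ in $W$ forces $E$ to be a maximal $\zeta$-eigenspace of $\bar W$ too, and the pointwise stabilizer of $E$ in $\bar W$ is trivial. Hence $\bar w$ is $\zeta$-regular for $\bar W$, so by the ``only if'' direction $d$ divides equally many degrees as codegrees of $\bar W$---namely $a$ of each---even though $\operatorname{rank}(\bar W)<n$.

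\textbf{Main obstacle.} What remains---turning the last paragraph into a contradiction (or an induction)---amounts to comparing the codegrees of the reflection quotient $\bar W=W(E)$ with those of $W$, and this codegree comparison is the real content of the uniform proof and is where invariant theory is unavoidable. A convenient way to see the remaining equivalence is via the discriminant $\Delta_W\in\mathbb{C}[V/W]=\mathbb{C}[\bar f_1,\dots,\bar f_n]$, represented by $\prod_{H\in\mathcal{A}}\alpha_H^{e_H}$ with $\alpha_H$ a linear form vanishing on $H$: the scaling action of $\mu_d=\langle\zeta\rangle$ on $V$ descends to $\bar f_i\mapsto\zeta^{d_i}\bar f_i$, so its fixed subspace is the coordinate subspace $L_d$ spanned by the $\bar f_i$ with $d\mid d_i$ (of dimension $a$), and one checks that $d$ is a regular number precisely when $\Delta_W$ does not vanish identically on $L_d$. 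The hard part is to show that the non-vanishing of $\Delta_W|_{L_d}$ is controlled by the codegrees---concretely, that $\Delta_W|_{L_d}$ agrees, up to a nonzero scalar, with the discriminant of $\bar W=W(E)$ exactly when $\bar W$'s codegrees are $\{d_i^*:d\mid d_i^*\}$, i.e.\ when $a=\#\{i:d\mid d_i^*\}$; this is the invariant-theoretic heart of the matter. (Alternatively, the equivalence can be verified directly against the Shephard--Todd classification with the tables of degrees and codegrees in hand, which is how it was first established.)
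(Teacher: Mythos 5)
There is a genuine gap: your argument only establishes the ``only if'' half. That direction is fine---it is the standard Springer argument, applying \cite[Proposition 4.5]{springer1974regular} to $V$ and to $V^*$ (whose exponents are $d_i^*+1$) and comparing the multiplicity of the eigenvalue $\zeta$ on $V$ with that of $\zeta^{-1}$ on $V^*$. But the ``if'' direction, which is the actual content of the theorem, is not proven. Your reduction to the reflection subquotient $\bar W = N_W(X)/W_X$ acting on $X=\Fix(W_E)$ does not close: even granting the Lehrer--Springer facts you invoke (that $\max_w \dim V(w,\zeta)$ equals the number of degrees divisible by $d$, that $\bar W$ acts on $X$ as a reflection group with the relevant hyperplanes, and that $\bar w$ is $\zeta$-regular in $\bar W$), the conclusion you reach is a statement about the degrees and codegrees of $\bar W$, not a contradiction with the hypothesis $a(d)=b(d)$ for $W$. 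Turning that into a contradiction requires precisely the comparison of the codegrees of $W(E)$ with those of $W$ (equivalently, the non-vanishing of $\Delta_W$ on the coordinate subspace $L_d$ being governed by the codegrees), and you explicitly leave this as the ``main obstacle,'' offering only the options of the invariant-theoretic argument or a check against the Shephard--Todd tables. So what is submitted is a proof of the easy implication plus a (correct) identification of where the difficulty lies, not a proof of the theorem.

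For context, the paper itself gives no proof of this statement: it records it as a known result, first established case-by-case by Lehrer--Springer \cite[Theorem C]{lehrer1999reflection} and later proven uniformly by Lehrer--Michel \cite[Theorem 3.1]{lehrer2003invariant}. Your ``only if'' argument coincides with the standard one, and your sketch of the ``if'' direction correctly points toward the Lehrer--Michel strategy (eigenspace subquotients and the discriminant restricted to $L_d$), but the decisive codegree comparison---the uniform heart of that paper---is exactly the step you have not supplied. As written, the attempt should either cite those references for the hard direction or be completed along the lines you indicate.
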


If $W$ is well-generated, then the Coxeter number $h$ is a regular number because it divides only $d_n$ and $d_1^*$. It follows that there exists a $\zeta$-regular element for every $h$-th root of unity $\zeta$ (see \cite[Remark 11.23]{lehrer2009unitary}). For $\zeta$ a primitive $h$-th root of unity, the $\zeta$-regular elements of $W$ are called \defn{Coxeter elements}.

\begin{remark}
    Some authors define the Coxeter elements to be only the $\zeta_h$-regular elements. In several cases, we will find it useful to restrict our attention to this subset of the Coxeter elements. It follows easily from the definition that each Coxeter element is a power of some $\zeta_h$-regular element.
\end{remark}

\begin{definition}
    The \defn{fake degree} $\Feg_\chi(q)$ of an irreducible character $\chi$ of $W$ is the graded multiplicity of the irreducible representation with character $\chi$ in $S/S^W_+$:
    \[
        \Feg_\chi (q) = \sum_{i = 1}^r q^{e_i(\chi)}.
    \]
\end{definition}

\begin{remark}
\label{rem:fegDef}
    The reader should be aware that some authors use the symmetric algebra $\operatorname{Sym}(V)$ rather than $\operatorname{Sym}(V^*)$ to define fake degrees $\overline{\operatorname{Feg}}_{\chi}(q)$. In particular, this alternative definition is used in GAP3. These two definitions are related by
    \[
        \operatorname{Feg}_\chi (q) = \overline{\operatorname{Feg}}_{\chi^*}(q).
    \]
    In the real case, the two definitions coincide.
\end{remark}

\begin{example}[Galois twists]
\label{ex:galoisFeg}
    Let $W$ be a well-generated irreducible complex reflection group with reflection representation $V$ and Coxeter number $h$. By \cite[Theorem 2.13, Corollary 3.2]{orlik1980unitary}, the fake degree of $\Lambda^k V^{\sigma_p}$ is given by
    \[
        \sum_{i_1 < \cdots < i_k} q^{e_{i_1}(V^{\sigma_p}) + \cdots + e_{i_k}(V^{\sigma_p})}.
    \]
    
    Moreover, the sets $\{e_1(V^{\sigma_p}), \dots, e_n(V^{\sigma_p})\}$ and $\{pe_1 \mod h,\dots,pe_n \mod h\}$ coincide: It follows easily from \cite[Proposition 4.5]{springer1974regular} that the sets $\{e_1(V^{\sigma_p}), \dots, e_n(V^{\sigma_p})\}$ and $\{pe_1,\dots,pe_n\}$ coincide mod $h$; this is \cite[Proposition 8.1.2]{stump2020cataland}. It then suffices to show that $e_i(V^{\sigma_p}) < h$. For the exceptional groups, this can be checked by computer \cite{Code}. We will prove it for the families $G(m,1,n)$ and $G(m,m,n)$ later (see \Cref{ex:galFeg1} and \Cref{ex:galFeg2}).
\end{example}

\begin{question}
    Is there a uniform proof that $e_i(V^{\sigma_p}) < h$?
\end{question}

\section{Braid groups and Hecke algebras}
\label{sec:braidHecke}

\subsection{Braid groups of complex reflection groups}

By \cite[Corollary 1.6]{steinberg1964differential}, the action of $W$ on $V$ is free precisely on $V^\text{reg}$. It is then not hard to show that the quotient map $\rho : V^\text{reg} \to V^\text{reg}/W$ is a Galois covering. 

\begin{definition}
    The \defn{pure braid group} for a complex reflection group $W$ is $P(W) := \pi_1(V^\text{reg})$.  Its \defn{braid group} is $B(W) := \pi_1(V^\text{reg} / W)$.
\end{definition}
The quotient $\rho$ induces a surjection $\pi : B(W) \to W$, giving a short exact sequence
\[
    1 \to P(W) \xrightarrow[]{\rho_*} B(W) \xrightarrow[]{\pi} W \to 1
,\]
where $W$ can be interpreted as the group of deck transformations of the covering (see \cite[Proposition 1.40]{hatcher_2001}).

The braid group $B(W)$ has a set of generators $\{\mathbf{s}_{H,\gamma}\}$ called \defn{generators of the monodromy} or \defn{braid reflections} (see \cite[Section 2.B]{broue1998complex} or \cite[Section 4.2.5]{broue2010introduction}), such that $\pi(\mathbf{s}_{H,\gamma}) = s_H$ is a distinguished reflection. Moreover, the pure braid group $P(W)$ is generated by the $\{\mathbf{s}_{H,\gamma}^{e_{\mathcal{C}}}\}$ (where $H \in \mathcal{C}$), and so $W \cong B(W) / \langle \mathbf{s}_{H,\gamma}^{e_{\mathcal{C}}} \rangle$.

\begin{definition}
    Define the \defn{full twist} $\bm{\pi} \in P(W)$ by 
    \begin{align*}
        [0,1] &\to V^{\text{reg}} \\
        t &\mapsto v \exp(2\pi i t),
    \end{align*}
    where the basepoint $v \in V^{\text{reg}}$ is suppressed. The image $\rho_*(\bm{\pi}) \in B(W)$ is a central element of $B(W)$ and will also be called the full twist and be denoted by $\bm{\pi}$.
\end{definition}

\begin{proposition}[\cite{broue2010introduction}, Proposition 5.24] \label{thm:rootOfTwist}
    If $g$ is a $\zeta$-regular element of $W$, where $\zeta = \exp(2\pi i m/d)$ is a primitive $d$th root of unity, then $g$ has a lift $\mathbf{g} \in B(W)$ such that $\mathbf{g}^d = \bm{\pi}^m$.
\end{proposition}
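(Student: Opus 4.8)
The plan is to build the lift $\mathbf{g}$ by hand, using the topological description $B(W)=\pi_1(V^{\text{reg}}/W)$ together with the regular eigenvector supplied by $\zeta$-regularity, and then to lift the relevant loops through the covering $\rho\colon V^{\text{reg}}\to V^{\text{reg}}/W$. Since each reflection hyperplane is a linear (hence real-codimension-two) subspace, $V^{\text{reg}}$ is path-connected; moreover each such subspace is stable under scalar multiplication, so $V^{\text{reg}}$ is stable under the $\mathbb{C}^\times$-action. I would take as basepoint a regular eigenvector $v\in V^{\text{reg}}$ of $g$ with $gv=\zeta v$, where $\zeta=\exp(2\pi i m/d)$; because $V^{\text{reg}}$ is connected and the full twist is central, nothing is lost by using this basepoint. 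Define the path
\[
    \gamma\colon [0,1]\to V^{\text{reg}},\qquad \gamma(t)=\exp(2\pi i m t/d)\,v,
\]
which stays in $V^{\text{reg}}$ by the scalar-invariance just noted, and satisfies $\gamma(0)=v$, $\gamma(1)=\zeta v=gv$. As $\gamma(1)$ lies in the $W$-orbit of $v$, the composite $\overline{\gamma}:=\rho\circ\gamma$ is a loop at $\rho(v)$; set $\mathbf{g}:=[\overline{\gamma}]\in B(W)$.

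Next I would verify $\pi(\mathbf{g})=g$. For the Galois covering $\rho$, the surjection $\pi\colon B(W)\to W$ is the monodromy map: the class of a loop is sent to the unique deck transformation carrying the starting point of the lift based at $v$ to its endpoint. Here the lift of $\overline{\gamma}$ starting at $v$ is $\gamma$ itself, running from $v$ to $gv$, so $\pi(\mathbf{g})=g$ (after pinning down once the left/right convention for the monodromy action; otherwise one replaces $\gamma$ by its reverse).

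The heart of the argument is the identity $\mathbf{g}^d=\bm{\pi}^m$. I would lift the $d$-fold concatenation $\overline{\gamma}^{\,d}$ starting at $v$. The first segment lifts to $\gamma$, ending at $gv$. Since $\rho$ is $W$-invariant, the lift of $\overline{\gamma}$ starting at $g^jv$ is $g^j\cdot\gamma$, i.e.\ the map $t\mapsto g^j\bigl(\exp(2\pi i mt/d)v\bigr)=\exp\bigl(2\pi i m(t+j)/d\bigr)v$, ending at $g^{j+1}v$. Concatenating these for $j=0,\dots,d-1$ and reparametrizing $[0,d]$ to $[0,1]$, the lift of $\overline{\gamma}^{\,d}$ is the path $s\mapsto\exp(2\pi i m s)\,v$, which closes up at $\exp(2\pi i m)v=v$. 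Hence $\mathbf{g}^d$ lies in the image of $\rho_*\colon P(W)\to B(W)$ and equals $\rho_*$ of the class of this loop; but that loop is the $m$-th power of the full-twist path $s\mapsto\exp(2\pi i s)\,v$, so it represents $\bm{\pi}^m$ in $P(W)$. Pushing forward gives $\mathbf{g}^d=\bm{\pi}^m$ in $B(W)$.

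I do not expect a serious obstacle: once the geometric picture is in place, the proof is essentially bookkeeping. The points that need care are (i) fixing the monodromy convention so that one obtains $\pi(\mathbf{g})=g$ rather than $g^{-1}$ (adjust by reversing $\gamma$, equivalently replacing $m$ by $-m$, if necessary); (ii) the claim that translating $\gamma$ by powers of $g$ produces the successive segments of the lift of $\overline{\gamma}^{\,d}$ — this uses only that $g$ is a deck transformation, so $\rho\circ(g\cdot\gamma)=\rho\circ\gamma$, together with uniqueness of path lifting; and (iii) confirming that $V^{\text{reg}}$ is path-connected and that the class of the full-twist loop is independent of the basepoint up to the canonical identifications (it is central), so that using $v$ as basepoint is harmless.
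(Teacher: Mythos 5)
The paper does not actually prove this statement---it simply cites \cite{broue2010introduction}, Proposition 5.24---and your argument (take a regular eigenvector $v$ with $gv=\zeta v$, project the spiral $t\mapsto e^{2\pi i mt/d}v$ to a loop in $V^{\mathrm{reg}}/W$, and compute its $d$-th power by lifting segment-by-segment, using that $g$ is a deck transformation scaling $v$ by $\zeta$) is exactly the standard proof given in that reference, and it is correct. The one point to watch is your parenthetical remedy in (i): reversing $\gamma$ would simultaneously turn $\bm{\pi}^{m}$ into $\bm{\pi}^{-m}$ in the final identity, so it is not a free adjustment; but under the conventions of \cite{broue2010introduction} (which the paper follows) the lift you construct already maps to $g$, and no reversal is needed.
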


\subsection{Hecke algebras for complex reflection groups}

Define variables $\mathbf{u} := (u_{\mathcal{C},j})_{(\mathcal{C} \in \mathcal{A}/W),(0 \leq j \leq e_{\mathcal{C}} - 1)}$, and let $\mathbb{Z}[\mathbf{u}^{\pm 1}]$ be the ring of Laurent polynomials in these variables.

\begin{definition}
    Let $J$ be the ideal of the group algebra $\mathbb{Z}[\mathbf{u}^\pm] B(W)$ generated by the elements
    \[
        (\mathbf{s}_{H} - u_{\mathcal{C},0})(\mathbf{s}_{H} - u_{\mathcal{C}, 1}) \cdots (\mathbf{s}_{H} - u_{\mathcal{C}, e_{\mathcal{C}} - 1}),
    \]
    where $\mathcal{C} \in \mathcal{A} / W$, $H \in \mathcal{C}$, and $\mathbf{s}_{H}$ is a braid reflection (since generators $\mathbf{s}_{H,\gamma}$ and $\mathbf{s}_{H,\gamma'}$ of the monodromy around $H$ are conjugate in $B(W)$ by \cite[Lemma 2.13]{broue1998complex}, it suffices to use only one such braid reflection for each $H$ in the above relations to generate $J$). The \defn{generic Hecke algebra} $\mathcal{H}(W)$ is the quotient $\mathbb{Z}[\mathbf{u}^\pm]B(W) / J$. For $\mathbf{g} \in B(W)$, we'll denote by $T_{\mathbf{g}}$ the corresponding element in the Hecke algebra.
\end{definition}

\begin{theorem}[\textbf{The BMR freeness theorem}]
    The algebra $\mathcal{H}(W)$ is a free $\mathbb{Z}[\mathbf{u}^\pm]$-module of rank $|W|$.
\end{theorem}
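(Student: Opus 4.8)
The statement has two halves of very different character. The ``easy'' half is that $\HH(W)$ is \emph{spanned} over $\mathbb{Z}[\mathbf{u}^{\pm 1}]$ by at most $|W|$ elements; the ``hard'' half is that these spanning elements are linearly independent, so that no collapse occurs under the deformation. I would organize the proof around this split, since the first half is a concrete rewriting argument and the second is genuinely representation-theoretic.

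For the spanning bound, I would fix a set-theoretic section $w \mapsto \mathbf{w}$ of $\pi \colon B(W) \to W$ --- for instance by lifting each $w$ to a positive braid word of minimal length in the braid reflections --- and aim to show $\{T_{\mathbf{w}} : w \in W\}$ spans. The engine is the defining relation: in $\HH(W)$ the power $\mathbf{s}_H^{e_{\mathcal{C}}}$ can be rewritten as a $\mathbb{Z}[\mathbf{u}^{\pm 1}]$-combination of $1, \mathbf{s}_H, \dots, \mathbf{s}_H^{e_{\mathcal{C}}-1}$ (solve the monic degree-$e_\mathcal{C}$ relation for the top power, and multiply by $\mathbf{s}_H^{-1}$ to also reach negative powers). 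Combined with the braid relations of $B(W)$, one pushes an arbitrary element of $\mathbb{Z}[\mathbf{u}^{\pm1}]B(W)$ into a normal form deforming a normal form for $W$. Making this induction terminate and land exactly on the section elements is the technical crux of this half; for the infinite family $G(m,p,n)$ it is the cyclotomic-Hecke-algebra normal form (Ariki--Koike for $G(m,1,n)$, then a Clifford-theoretic descent for $G(m,p,n)$), and for the exceptional groups it is a finite, if lengthy, confluent-rewriting-system check.

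For independence, I would first reduce to the generic point: $\mathbb{Z}[\mathbf{u}^{\pm1}]$ is an integral domain, so a nontrivial linear relation among the $T_{\mathbf{w}}$ over $\mathbb{Z}[\mathbf{u}^{\pm1}]$ survives in $\HH(W) \otimes \mathbb{Q}(\mathbf{u})$ and conversely, so it suffices to prove $\dim_{\mathbb{Q}(\mathbf{u})} \HH(W)\otimes\mathbb{Q}(\mathbf{u}) = |W|$; the spanning half already gives ``$\le$''. To get ``$\ge$'' one must actually produce $|W|$ worth of representation theory --- e.g. construct an $\HH(W)$-module that is free of rank $|W|$ over $\mathbb{Z}[\mathbf{u}^{\pm1}]$ and faithful, or establish split semisimplicity of $\HH(W)\otimes\overline{\mathbb{Q}(\mathbf{u})}$ and match irreducible dimensions with those of $W$ via Tits' deformation theorem (\Cref{thm:tits}), using that the specialization $u_{\mathcal{C},j}\mapsto \zeta_{e_\mathcal{C}}^{\,j}$ recovers $\mathbb{C}[W]$. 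Either route forces an explicit construction of the representations: for $G(m,1,n)$ a tensor/induction construction of the regular representation (Ariki--Koike), for $G(m,p,n)$ Clifford theory over that, and for the exceptional groups a case-by-case treatment --- the rank-two exceptionals by the Etingof--Rains method and its refinements, the higher-rank exceptionals by computer-assisted verification (work of Marin, Marin--Pfeiffer, Chavli, and Tsuchioka).

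The main obstacle is precisely this no-collapse statement: there is no known uniform argument that the quadratic-type deformations are flat, so the theorem is genuinely proved by a case division along the Shephard--Todd classification, with the infinite families handled by explicit cellular/cyclotomic bases and the exceptionals by a mixture of hand computation and machine checking. For the purposes of the present paper the role of the theorem is foundational --- it is what makes $T_{\mathbf{g}}$ unambiguous, underwrites the canonical symmetrizing trace $\tau_q$, and supplies the dimension count feeding Tits' deformation theorem --- so it is legitimate here to invoke it as an established result.
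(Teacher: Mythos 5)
The paper does not prove this statement at all: it is quoted as a known background theorem (the Broué--Malle--Rouquier freeness theorem, established case-by-case in the literature, with the infinite family handled by Ariki--Koike-type arguments and the exceptional groups completed over many years by Marin, Marin--Pfeiffer, Chavli, Tsuchioka and others), and your closing judgment --- that in this paper the correct move is to invoke it as an established result --- is exactly what the author does. Your outline of how the theorem is actually proved in the literature (a spanning bound by $|W|$ elements obtained from the deformed order relations plus a normal form, followed by a no-collapse statement, all organized along the Shephard--Todd classification) is a fair description of that body of work, so as a survey of the external proof your proposal is essentially accurate.

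One substantive caveat: your proposed route for the independence half is circular as stated. Tits' deformation theorem, in the form the paper states it (\Cref{thm:tits}) and in its standard formulations, takes as a hypothesis that the algebra is a free (or at least flat, finitely generated) module over the base ring; one cannot use it, nor the split semisimplicity of $\mathcal{H}(W)\otimes\overline{\mathbb{Q}(\mathbf{u})}$ together with a dimension match against $\operatorname{Irr}(W)$, to \emph{establish} freeness without first knowing it. The literature instead uses a reduction due to Broué--Malle--Rouquier: it suffices to prove that $\mathcal{H}(W)$ is generated as a $\mathbb{Z}[\mathbf{u}^{\pm}]$-module by $|W|$ elements, and freeness of rank $|W|$ then follows from a specialization argument through the admissible map $u_{\mathcal{C},j}\mapsto \zeta_{e_{\mathcal{C}}}^{j}$ onto the group algebra (whose dimension pins the rank from below), not from an independent construction of $|W|$ worth of representations. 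So the genuinely hard content in each case is the spanning/normal-form half, and your second half should be replaced by, or rerouted through, that BMR criterion rather than Tits' theorem.
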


A ring homomorphism $\mathbb{Z}[\mathbf{u}^\pm] \to R$ induces a specialization $\mathcal{H}(W) \otimes_{\mathbb{Z}[\mathbf{u}^\pm]} R$ of the Hecke algebra. For example, the specialization $u_{\mathcal{C},j} \xmapsto[]{\sigma} \zeta_{e_\mathcal{C}}^j$ gives the group algebra of $W$ (so that the Hecke algebra may be thought of as a deformation of the group algebra of $W$). A specialization of the Hecke algebra is \defn{admissible} if it factors through this specialization.

\begin{definition}
    The \defn{spetsial Hecke algebra} $\mathcal{H}_q(W)$ is the admissible cyclotomic Hecke algebra induced by the map
    \[
        \theta_q : u_{\mathcal{C},j} \mapsto \begin{cases}
            q & \text{if } j = 0 \\
            \zeta_{e_{\mathcal{C}}}^j & \text{if } j > 0.
        \end{cases}
    \]
    This is a generalization of the 1-parameter Iwahori-Hecke algebra of Coxeter groups.
\end{definition}

\subsection{The representation theory of Hecke algebras}

Most of the material in this section is from the more general representation theory of symmetric algebras and can be found in \cite[Section 7]{geck2000characters}.

If $H$ is an associative $R$-algebra (where $R$ is a commutative ring), then a \defn{trace function} on $H$ is an $R$-linear map $\tau : H \to R$ such that $\tau(hh') = \tau(h'h)$ for all $h,h' \in H$. A trace function $\tau$ is a \defn{symmetrizing trace} if the bilinear form 
\[
    H \times H \to R \qquad (h,h') \mapsto \tau(hh')
\]
is non-degenerate. We say that $H$ is a \defn{symmetric algebra} if it possesses a symmetrizing trace.

\begin{example}
\label{ex:trace}
    There is a canonical symmetrizing trace on the group algebra $\mathbb{C}W$ given by $\tau(w) = \delta_{1 w}$. If $W$ is a real reflection group, then $\mathcal{H}(W)$ has an analogous canonical symmetrizing trace given by $\tau(T_w) = \delta_{1w}$ (see \cite[Proposition 8.1.1]{geck2000characters}).
\end{example}

For non-real complex reflection groups, there is not an obvious choice of canonical lift $T_w$ for each $w \in W$, so there is not a simple generalization to these groups of the construction in \Cref{ex:trace}. It is conjectured that $\mathcal{H}(W)$ has a canonical symmetrizing trace for all complex reflection groups $W$. This is the BMM symmetrizing trace conjecture:

\begin{assumption}[\cite{broue1999towards}, Assumption 2.1(1)]
\label{assum:symTrace}
    There exists a $\mathbb{Z}[\mathbf{u}^{\pm}]$-linear map $\tau : \mathcal{H}(W) \to \mathbb{Z}[\mathbf{u}^\pm]$ such that:
    \begin{enumerate}
        \item $\tau$ is a symmetrizing trace.

        \item Through the specialization $u_{\mathcal{C},j} \mapsto \zeta_{e_{\mathcal{C}}}^j$, the form $\tau$ becomes the canonical symmetrizing trace on the group algebra.

        \item For all $\mathbf{b} \in B(W)$, 
        \[
            \tau(T_{\mathbf{b}^{-1}})^\vee = \frac{\tau(T_{\mathbf{b}\bm{\pi}})}{\tau(T_{\bm{\pi}})},
        \]
        where $\alpha \mapsto \alpha^\vee$ is the automorphism on $\mathbb{Z}[\mathbf{u}^\pm]$ consisting of simultaneous inversion of the indeterminates.
    \end{enumerate}
\end{assumption}

By \cite[Proposition 2.2]{broue1999towards}, if such a symmetrizing trace exists, it is unique. We will call $\tau$ the canonical symmetrizing trace on $\mathcal{H}(W)$. The BMM symmetrizing trace conjecture has been proved for the infinite family $G(m,p,n)$ (see \cite{bremke1997reduced} and \cite{malle1998symmetric}), but remains open for most of the exceptional groups.

The next result is essentially \cite[Theorem 7.2.6]{geck2000characters}, but the precise formulation presented here follows \cite[Proposition 7.12]{broue1999towards} so as to be more easily applied to Hecke algebras. Let $R$ be an integrally closed commutative noetherian domain with field of fractions $F$, and let $H$ be an $R$-algebra which is a free $R$-module of finite rank. Assume that $H$ is endowed with a symmetrizing trace $\tau$. For each (absolutely) irreducible character $\chi$ of $FH := F \otimes_R H$, let $\omega_\chi : Z(H) \to R$ denote the central character associated with $V_\chi$ (i.e., the restriction of the natural map $FH \to \operatorname{End}_F(V_\chi)$), where $V_\chi$ is the $FH$-module corresponding to $\chi$. The \defn{Schur element of $\chi$} is the element of $R$ defined by $S_\chi := \omega_\chi(\chi^\vee)$, where $\chi^\vee$ is the unique element of $H$ satisfying $\tau(\chi^\vee h) = \chi(h)$ for all $h \in H$. 

\begin{theorem}
     If $FH$ is split semisimple. Then 
     \[
        \tau = \sum_{\chi \in \operatorname{Irr}(FH)} \frac{1}{S_\chi} \chi.
     \]
\end{theorem}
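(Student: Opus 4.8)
The plan is to decompose the symmetrizing trace into its isotypic pieces using the Wedderburn decomposition of the split semisimple algebra $FH$. Since $FH$ is split semisimple, we have an isomorphism $FH \cong \bigoplus_{\chi \in \operatorname{Irr}(FH)} \operatorname{End}_F(V_\chi)$, and correspondingly a complete set of orthogonal central idempotents $\{e_\chi\}_{\chi \in \operatorname{Irr}(FH)}$ with $\sum_\chi e_\chi = 1$. First I would write $\tau = \sum_\chi \tau_\chi$ where $\tau_\chi := \tau(e_\chi \cdot -)$; by centrality of $e_\chi$ and the trace property, each $\tau_\chi$ is again a trace function, supported on the simple block $\operatorname{End}_F(V_\chi)$. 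On a matrix algebra $\operatorname{End}_F(V_\chi)$ the space of trace functions is one-dimensional and spanned by the character $\chi$ itself, so $\tau_\chi = a_\chi \chi$ for some scalar $a_\chi \in F$. It remains to identify $a_\chi = 1/S_\chi$.

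To pin down the scalar, I would use the defining property of the contravariant element $\chi^\vee \in H$: namely $\tau(\chi^\vee h) = \chi(h)$ for all $h \in H$ (this element exists and is unique because $\tau$ is nondegenerate). Apply the decomposition $\tau = \sum_\psi a_\psi \psi$ to $h = \chi^\vee h'$ for arbitrary $h' \in H$: on one hand this gives $\chi(\chi^\vee h') $... wait, more carefully, I would instead evaluate both sides at a well-chosen element. Since $\chi^\vee$ lies in the center-action image and projects to the block of $\chi$ (because $\psi(\chi^\vee h) = 0$ for $\psi \neq \chi$ would need checking — in fact $\tau(\chi^\vee h) = \chi(h)$ forces $a_\psi \psi(\chi^\vee h) = 0$ for $\psi \ne \chi$ and $a_\chi \chi(\chi^\vee h) = \chi(h)$ for all $h$), we learn that $\chi^\vee$ acts on $V_\chi$ as a scalar, and that scalar is exactly $\omega_\chi(\chi^\vee) = S_\chi$ by definition of the Schur element. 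Taking $h = e_\chi$ (or any element projecting to the identity of the $\chi$-block) then yields $a_\chi \cdot \chi(\chi^\vee) = \chi(1)$, i.e.\ $a_\chi \cdot S_\chi \chi(1) = \chi(1)$, hence $a_\chi = 1/S_\chi$.

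I would present this over $F$ (so that semisimplicity and splitness give the matrix-algebra structure cleanly) and note that the resulting identity $\tau = \sum_\chi S_\chi^{-1}\chi$ is an equality of $F$-linear maps $H \to F$; it automatically restricts to the statement about the $R$-valued trace. The main obstacle is really just bookkeeping: verifying that $\chi^\vee$ is central-acting on $V_\chi$ (equivalently that it lies in the preimage of the center under $FH \to \operatorname{End}_F(V_\chi)$ up to the other blocks), so that $\omega_\chi(\chi^\vee)$ makes sense and equals $\chi(\chi^\vee)/\chi(1)$. This follows because $h \mapsto \tau(\chi^\vee h) = \chi(h)$ vanishes on every block $\ne \chi$ and equals $\chi$ on the $\chi$-block, which forces $e_\psi \chi^\vee = 0$ for $\psi \ne \chi$; and within $\operatorname{End}_F(V_\chi)$ the condition $\operatorname{tr}_{V_\chi}(\chi^\vee h) = \operatorname{tr}_{V_\chi}(h)$ for all $h$ forces $\chi^\vee|_{V_\chi}$ to be the identity scaled appropriately — here one must be careful that $\tau$ restricted to the $\chi$-block is $a_\chi$ times the matrix trace, which is precisely the circular part of the argument, resolved by using nondegeneracy of $\tau$ to guarantee $a_\chi \ne 0$ before solving for it. Everything else is the standard structure theory of symmetric algebras as in \cite[Section 7]{geck2000characters}.
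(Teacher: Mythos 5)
Your argument is correct, and the paper offers no proof of its own to compare against: it quotes this statement from \cite[Theorem 7.2.6]{geck2000characters} (in the formulation of \cite[Proposition 7.12]{broue1999towards}), and your block-by-block decomposition --- trace functions on a split semisimple algebra are spanned by the irreducible characters, so $\tau=\sum_\chi a_\chi\chi$, then identify $a_\chi$ using the dual element $\chi^\vee$ --- is essentially the standard argument given there. Two small points of polish: the fact that $a_\chi\neq 0$ deserves its one-line justification (if $a_\chi=0$ then, since $e_\chi$ is central, $\tau(xy)=a_\chi\chi(xy)=0$ for every $x$ in the $\chi$-block and every $y$, putting the whole block in the radical of the form and contradicting nondegeneracy), and your final specialization to $h=e_\chi$ divides by $\chi(1)$, which is harmless in characteristic zero but can be avoided entirely by observing that once $\chi^\vee$ acts on $V_\chi$ by the scalar $\omega_\chi(\chi^\vee)=S_\chi$, the identity $a_\chi\chi(\chi^\vee h)=\chi(h)$ for all $h$ reads $a_\chi S_\chi\chi=\chi$ as functions, whence $a_\chi S_\chi=1$.
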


Before applying this result to Hecke algebras and their admissible specializations, we first discuss Tits' deformation theorem \cite[Theorem 7.4.6]{geck2000characters}, which gives a bijection between the irreducible representations of an admissible specialization of the Hecke algebra and those of the group algebra.

\begin{theorem}[\cite{malle1999fake}, Theorem 5.2]
\label{thm:splittingField}
    Let $\mu(k_W)$ be the group of roots of unity in $k_W$, and define variables $\mathbf{v} = (v_{\mathcal{C},j})_{(\mathcal{C} \in \mathcal{A}/W),(0 \leq j \leq e_{\mathcal{C}}-1)}$ that satisfy
    \[
        v_{\mathcal{C},j}^{|\mu(k_W)|} = \zeta_{e_{\mathcal{C}}}^{-j} u_{\mathcal{C},j}.
    \]
    Then the field $K_W := k_W(\mathbf{v})$ is a splitting field for $\mathcal{H}(W)$. We will define $\mathcal{H}_{K_W}(W) := \mathcal{H}(W) \otimes_{\mathbb{Z}[\mathbf{u}^\pm]} K_W$.
\end{theorem}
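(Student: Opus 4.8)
The plan is to reduce the statement, via the general representation theory of symmetric algebras, to a family-by-family construction of explicit matrix models. Write $F:=k_W(\mathbf{u})$ for the field of fractions of $\mathbb{Z}[\mathbf{u}^{\pm}]$ and $\overline{F}$ for an algebraic closure. By the BMR freeness theorem $\mathcal{H}(W)$ is free of rank $|W|$ over $\mathbb{Z}[\mathbf{u}^{\pm}]$, and the specialization $u_{\mathcal{C},j}\mapsto\zeta_{e_{\mathcal{C}}}^{j}$ recovers the group algebra of $W$, which is split semisimple over $k_W$ by the Benard--Bessis theorem \cite{benard1976schur,bessis1997corps}. Hence the generic fibre $\overline{F}\mathcal{H}(W)$ is split semisimple, and Tits' deformation theorem furnishes a canonical bijection $\chi\mapsto\chi_{\mathcal{H}}$ from $\operatorname{Irr}(W)$ onto $\operatorname{Irr}(\overline{F}\mathcal{H}(W))$, compatible with reduction to the group algebra. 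Consequently $K_W$ is a splitting field if and only if every simple $\overline{F}\mathcal{H}(W)$-module is realizable over $K_W$---equivalently, all character values $\chi_{\mathcal{H}}(T_{\mathbf{b}})$ lie in $K_W$ (rationality) and the Schur index of each $\chi_{\mathcal{H}}$ over $K_W$ is trivial. In practice one secures both simultaneously by exhibiting matrices with entries in $K_W$.

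The definition of the $v_{\mathcal{C},j}$ is exactly what permits this. In any representation a braid reflection $\mathbf{s}_H$ (for $H\in\mathcal{C}$) acts semisimply with eigenvalues among $u_{\mathcal{C},0},\dots,u_{\mathcal{C},e_{\mathcal{C}}-1}$, and the group-algebra specialization identifies the eigenvalue $u_{\mathcal{C},j}$ with $\zeta_{e_{\mathcal{C}}}^{j}$. Comparing the Hecke generator $\mathbf{s}_H$ with the group element $s_H$ thus amounts to rescaling the $j$-th eigenspace by the factor $\zeta_{e_{\mathcal{C}}}^{-j}u_{\mathcal{C},j}=v_{\mathcal{C},j}^{|\mu(k_W)|}$; the transition matrices between a seminormal model for $\mathbf{s}_H$ and one for $s_H$ then involve powers of the $v_{\mathcal{C},j}$, and since the phases occurring in a $k_W$-rational model of the group element lie in $\mu(k_W)$, exactly $|\mu(k_W)|$-th roots are needed. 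As a sanity check: for a Weyl group one has $k_W=\mathbb{Q}$, $|\mu(k_W)|=2$ and $e_{\mathcal{C}}=2$, so $v_{\mathcal{C},0}^{2}=u_{\mathcal{C},0}$ and $v_{\mathcal{C},1}^{2}=-u_{\mathcal{C},1}$; under the one-parameter relation $(T_s-q)(T_s+1)=0$ this gives $K_W=\mathbb{Q}(q^{1/2})$, in agreement with the known splitting fields of Iwahori--Hecke algebras of Weyl groups.

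For the infinite series the models are explicit. When $W=G(m,1,n)$, $\mathcal{H}(W)$ is (a rescaling of) the Ariki--Koike algebra; over a splitting field its simple modules are indexed by $m$-tuples of partitions of total size $n$ and carry Jucys--Murphy/seminormal realizations whose matrix entries are rational functions of the eigenvalue parameters, hence lie in $K_W$ once the normalization above is imposed. For $W=G(m,m,n)$, and then for general $W=G(m,p,n)$, one descends from $G(m,1,n)$ via Clifford theory: $\mathcal{H}(G(m,p,n))$ is a subalgebra of $\mathcal{H}(G(m,1,n))$ whose ``quotient'' is cyclic of order $p$, so its simple modules arise by restriction and twisting, and one checks that the extension needed to split the Clifford-theoretic data---roots of eigenvalue parameters multiplied by roots of unity---already sits inside $K_W$; this is precisely where the twist by $\zeta_{e_{\mathcal{C}}}^{-j}$ and the use of the full group $\mu(k_W)$ enter. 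For the $34$ exceptional groups $G_4,\dots,G_{37}$ one argues case by case, building matrix models from reflection subgroups and Clifford theory and matching them against the known character tables and generic degrees, with computer assistance for the larger groups.

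The main obstacle is that there is no uniform argument: the claim that no algebraic extension beyond the prescribed $v_{\mathcal{C},j}$ is needed---equivalently, triviality of the Schur indices over $K_W$---must be established separately for the infinite series and for each exceptional type, and for the largest exceptional groups it ultimately rests on explicit computation. A secondary but real difficulty is bookkeeping: one must reconcile the several standard parametrizations of cyclotomic Hecke algebras (and the $\operatorname{Sym}(V)$ versus $\operatorname{Sym}(V^{*})$ conventions) so that the relation $v_{\mathcal{C},j}^{|\mu(k_W)|}=\zeta_{e_{\mathcal{C}}}^{-j}u_{\mathcal{C},j}$ is indeed the one matching the eigenvalue accounting in each model used.
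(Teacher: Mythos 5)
This statement is not proved in the paper at all: it is quoted from Malle \cite{malle1999fake}, Theorem 5.2, and used as a black box, so there is no internal proof to compare your argument against. Your outline does track the strategy of the proof that exists in the literature: split semisimplicity of the generic algebra together with Tits' deformation theorem reduces the claim to realizing each irreducible character over $K_W$ (rationality of character values plus trivial Schur index), which is then settled for $G(m,1,n)$ via the Ariki--Koike seminormal models, for $G(m,p,n)$ by Clifford-theoretic descent, and for the exceptional groups case by case with explicit models and computation.

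That said, as written your proposal is a plan rather than a proof, and the two points that constitute the actual content of the theorem are exactly the ones you defer: (i) that the descent from $G(m,1,n)$ to $G(m,m,n)$ (more generally $G(m,p,n)$) needs only the $|\mu(k_W)|$-th roots $v_{\mathcal{C},j}$ of the twisted parameters $\zeta_{e_{\mathcal{C}}}^{-j}u_{\mathcal{C},j}$ and nothing further, and (ii) that character fields and Schur indices over $K_W$ are trivial for all thirty-four exceptional types. You acknowledge both gaps yourself. Be careful, too, with the eigenvalue-rescaling heuristic you offer for the exponent $|\mu(k_W)|$: for $G(m,1,n)$ the generic algebra is already split over $k_W(\mathbf{u})$ with no roots extracted at all, so the heuristic does not by itself explain why this particular extension is the right one uniformly; the exponent is what comes out of the case analysis, not an a priori computation. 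Since the result is a known cited theorem, the citation is the appropriate move in this paper; reconstructing the proof would require carrying out the case-by-case verification you postpone, not just naming it.
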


We may extend the specialization $\sigma$ of $\mathcal{H}(W)$ (giving the group algebra) by $v_{\mathcal{C},j} \xmapsto[]{\sigma} 1$. This induces a map $\tilde{\sigma} : \mathcal{H}_{K_W}(W) \to k_W[W]$ which agrees with the surjection $\pi : B(W) \to W$. We are now in the proper setting to apply Tits' deformation theorem (the precise statement here follows \cite[Theorem 4.6]{douvropoulos2018enumerating}).
\begin{theorem}[Tits' deformation theorem]
\label{thm:tits}
    The algebra $\mathcal{H}_{K_W}(W)$ is semisimple and the specialization $\sigma$ induces a bijection
    \[
        d_{\sigma} : \operatorname{Irr}(\mathcal{H}_{K_W}(W)) \to \operatorname{Irr}(k_W[W])
    \]
    that respects the spectra of elements. That is, for any irreducible $\mathcal{H}_{K_W}(W)$-module $U$ the following diagram commutes:
    \begin{center}
        \begin{tikzcd}
            \mathcal{H}_{K_W}(W) \arrow[r, "p_U"] \arrow[d, "\tilde{\sigma}"] & k_W[\mathbf{v}^{\pm}][x] \arrow[d, "t_\sigma"] \\
            k_W[W] \arrow[r, "p_{d_\sigma(U)}"] & k_W[x]
        \end{tikzcd}
    \end{center}
    where the horizontal maps send an element to its characteristic polynomial, and the vertical maps are naturally induced by $\sigma$. Let $\chi_{\mathbf{v}}$ and $\chi$ be the characters associated to $U$ and $d_\sigma(U)$, respectively. Then we have
    \[
        \chi(g) = \sigma(\chi_{\mathbf{v}}(T_{\mathbf{g}})),
    \]
    for all $\mathbf{g} \in B(W)$ where $g = \pi(\mathbf{g})$.
\end{theorem}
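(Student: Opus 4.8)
The plan is to deduce this from the abstract Tits deformation theorem for orders over an integrally closed domain (\cite[Thm.~7.4.6]{geck2000characters}, \cite[Thm.~4.6]{douvropoulos2018enumerating}), supplying its hypotheses with the $\mathbf v$-variables of \Cref{thm:splittingField}. First I would fix the base ring. By \Cref{thm:splittingField} we have $u_{\mathcal{C},j}=\zeta_{e_\mathcal{C}}^{\,j}v_{\mathcal{C},j}^{|\mu(k_W)|}$ with $\zeta_{e_\mathcal{C}}=\operatorname{tr}(s_H)-(n-1)\in k_W$, so $\mathbb Z[\mathbf u^{\pm}]\subseteq\mathcal O:=k_W[\mathbf v^{\pm}]\subseteq K_W$; here $\mathcal O$ is an integrally closed Noetherian domain with fraction field $K_W$, and $\sigma\colon\mathcal O\to k_W$ is reduction modulo the maximal ideal $(v_{\mathcal{C},j}-1)_{\mathcal{C},j}$, restricting on $\mathbb Z[\mathbf u^{\pm}]$ to $u_{\mathcal{C},j}\mapsto\zeta_{e_\mathcal{C}}^{\,j}$. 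By the BMR freeness theorem, $H:=\mathcal H(W)\otimes_{\mathbb Z[\mathbf u^{\pm}]}\mathcal O$ is free of rank $|W|$ over $\mathcal O$, with $H\otimes_{\mathcal O}K_W=\mathcal H_{K_W}(W)$ and $H\otimes_{\mathcal O}k_W=k_W[W]$ (the specialization $u_{\mathcal{C},j}\mapsto\zeta_{e_\mathcal{C}}^{\,j}$ collapses each Hecke relation $\prod_j(\mathbf{s}_H-u_{\mathcal{C},j})$ to $\mathbf{s}_H^{\,e_\mathcal{C}}-1$, and $\tilde\sigma$ is compatible with $\pi$, as noted before the statement).

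Next I would establish that $\mathcal H_{K_W}(W)$ is split semisimple. Since $\operatorname{char}k_W=0$ and $k_W$ is the field of definition, $k_W[W]$ is split semisimple (Maschke together with \cite{benard1976schur,bessis1997corps}). The Gram determinant $\delta\in\mathcal O$ of the trace bilinear form $(a,b)\mapsto\operatorname{tr}(L_{ab})$ on $H$ reduces under $\sigma$ to the Gram determinant of the trace form on $k_W[W]$, which is nonzero by semisimplicity; hence $\delta\ne0$, so the trace form on $\mathcal H_{K_W}(W)=H\otimes_{\mathcal O}K_W$ is nondegenerate and $\mathcal H_{K_W}(W)$ is separable, in particular semisimple. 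It is split because $K_W$ is a splitting field for $\mathcal H(W)$ by \Cref{thm:splittingField}. Thus both sides are split semisimple with $\sum_\chi\chi(1)^2=|W|$.

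For the bijection and the compatibility with spectra I would run the standard argument. For $h\in H$, the eigenvalues of $h$ on any simple constituent of the regular module are among the roots of $\operatorname{charpoly}(L_h)\in\mathcal O[x]$, hence integral over the integrally closed ring $\mathcal O$; so all character values and all coefficients of characteristic polynomials of $\mathcal H_{K_W}(W)$-modules, evaluated on $H$, lie in $\mathcal O$. Hence for $U\in\operatorname{Irr}(\mathcal H_{K_W}(W))$ with character $\chi_{\mathbf v}$, the class function $\bar\chi:=\sigma\circ\chi_{\mathbf v}$ on $k_W[W]$ is well defined. Using $\sigma(\delta)\ne0$, one scales the central primitive idempotent $e_U$ into $H$ and checks its image in $k_W[W]$ is a nonzero central idempotent, with the images of distinct $e_U$ orthogonal; since $k_W[W]$ is semisimple of dimension exactly $|W|=\sum_U\dim(e_U\mathcal H_{K_W}(W))$, each reduced idempotent must be primitive, $\bar\chi$ is irreducible, and $U\mapsto d_\sigma(U):=\bar\chi$ is a bijection preserving dimensions. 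Finally, for every $m$ one has $\operatorname{tr}\!\bigl(\sigma(h)^m\mid d_\sigma(U)\bigr)=\sigma\bigl(\chi_{\mathbf v}(h^m)\bigr)$, and since $\operatorname{char}k_W=0$ Newton's identities recover the elementary symmetric functions of the eigenvalues from these power sums, giving $t_\sigma(p_U(h))=p_{d_\sigma(U)}(\sigma(h))$ — the commuting square. Specializing to $h=T_{\mathbf g}$ and using $\tilde\sigma(T_{\mathbf g})=\pi(\mathbf g)=g$ gives $\chi(g)=\sigma(\chi_{\mathbf v}(T_{\mathbf g}))$.

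The main obstacle is the passage, inside the third step, from a nonzero reduced central idempotent to a \emph{primitive} one — equivalently, the fact that reductions of irreducible $\mathcal H_{K_W}(W)$-characters stay irreducible rather than splitting. This is forced precisely because the target algebra $k_W[W]$ is semisimple of the same dimension $|W|$, leaving no room for a simple module to break up under specialization; the remaining work is bookkeeping, and the point of the $\mathbf v$-variables in \Cref{thm:splittingField} is exactly to present $\sigma$ as reduction at a maximal ideal of an integrally closed ring over which $\mathcal H(W)$ has an integral form.
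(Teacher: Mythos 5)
Your plan is essentially the paper's: the paper does not prove this statement but recalls it as the known Tits deformation theorem, citing \cite[Theorem 7.4.6]{geck2000characters} and following the formulation of \cite[Theorem 4.6]{douvropoulos2018enumerating}, after setting up precisely the data you set up (the splitting field $K_W$ of \Cref{thm:splittingField}, the extension of $\sigma$ by $v_{\mathcal{C},j}\mapsto 1$, and the induced map $\tilde\sigma$ compatible with $\pi$). Your verification of the hypotheses of the abstract theorem --- BMR freeness giving an integral form over $k_W[\mathbf v^{\pm}]$, $K_W$ a splitting field, $k_W[W]$ split semisimple by Maschke together with Benard--Bessis, and semisimplicity of $\mathcal H_{K_W}(W)$ from non-vanishing of the specialized Gram determinant of the regular trace form --- is correct, so deducing the statement by applying the cited theorem is exactly what the paper does.

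One caution about the self-contained sketch you add: the step you yourself flag, primitivity of the reduced idempotents, is not justified by the count you give. From the facts that the $\overline e_U$ are nonzero, pairwise orthogonal, central, sum to $1$, and that $\dim_{k_W}\overline e_U\,k_W[W]=(\dim U)^2$ (by freeness of $e_U H_{\mathfrak p}$ over the local ring), one cannot yet conclude that $\overline e_U$ is primitive: nothing excludes $\overline e_U$ being a sum of several block idempotents of $k_W[W]$ whose matrix sizes have squares summing to $(\dim U)^2$. The standard way to close this (and what the cited proofs do) is to make $\sigma\circ\chi_{\mathbf v}$ an honest character: pass to a valuation ring of $K_W$ dominating $\mathcal O_{\ker\sigma}$ and reduce a full lattice in $U$, so that $d_\sigma[U]=\sum_\phi d_{U\phi}[\phi]$ with nonnegative integers; cutting the regular character with $\overline e_U$ gives $(\dim U)\,\overline\chi_U=\sum_{\phi}\phi(1)\,\phi$, the sum over the irreducibles $\phi$ lying under $\overline e_U$, whence $d_{U\phi}\dim U=\phi(1)$ for those $\phi$ and $\sum_\phi d_{U\phi}^2=1$, forcing exactly one constituent with multiplicity one. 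Since your declared route is to quote the abstract theorem rather than reprove it, this does not invalidate the proposal, but the sketch as written would not stand alone.
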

Tits' deformation theorem can be applied to any admissible specialization of $\mathcal{H}(W)$ by first moving to the splitting field determined by \Cref{thm:splittingField}. In particular, for the spetsial Hecke algebra $\mathcal{H}_q(W)$, the splitting field is $k_W(y)$ with $y^{|\mu(k_W)|} = q$. We will denote by $\chi_q$ the irreducible character of $\mathcal{H}_q(W)$ corresponding to the irreducible character $\chi$ of $W$.

The next result is a slight reformulation of \cite[Proposition 4.18]{broue1997} following \cite[Proposition 4.11]{douvropoulos2018enumerating}
\begin{proposition}
    Suppose that $W$ is an irreducible complex reflection group. If $\mathbf{w} \in B(W)$ is a lift of $w \in W$ such that $\mathbf{w}^d = \bm{\pi}^\ell$, then
    \[
        \chi_q(T_{\mathbf{w}}) = \chi(w) \cdot q^{(nh - h_\chi) \ell /d},
    \]
    for $\chi$ an irreducible character of $W$.
\end{proposition}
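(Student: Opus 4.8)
The plan is to use that the full twist $\bm{\pi}$ is central in $B(W)$. Then $T_{\bm{\pi}}$ is central in $\mathcal{H}_q(W)$, and since $\mathcal{H}_q(W)$ becomes split semisimple over its splitting field $k_W(y)$ (\Cref{thm:tits}), $T_{\bm{\pi}}$ acts on the simple module $V_{\chi_q}$ affording $\chi_q$ by a scalar $\omega_\chi(q)$. The statement then breaks into two independent pieces: (a) identifying the scalar as $\omega_\chi(q) = q^{nh - h_\chi}$; and (b) a short eigenvalue/specialization argument upgrading this to the claimed value of $\chi_q(T_{\mathbf{w}})$.

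For part (b): from $\mathbf{w}^d = \bm{\pi}^\ell$ in $B(W)$, the operator $A$ by which $T_{\mathbf{w}}$ acts on $V_{\chi_q}$ satisfies $A^d = \omega_\chi(q)^\ell\,\mathrm{id} = q^{(nh-h_\chi)\ell}\,\mathrm{id}$. Since $q \neq 0$, $A$ is diagonalizable with eigenvalues among the $d$th roots of $q^{(nh-h_\chi)\ell}$; fixing the branch $q^{(nh-h_\chi)\ell/d}$ that equals $1$ at $q=1$ (legitimate because $nh \geq h_\chi$, which falls out of part (a)), these eigenvalues are $\mu_i\,q^{(nh-h_\chi)\ell/d}$ for certain $d$th roots of unity $\mu_i$. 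In particular the characteristic polynomial of $A$ has no pole at $q=1$, so by the commuting square of \Cref{thm:tits} it specializes under $\sigma$ to the characteristic polynomial of $w = \pi(\mathbf{w})$ on $V_\chi$; as $w^d = \pi(\bm{\pi}^\ell) = e$, the multiset $\{\mu_i\}$ is exactly the eigenvalue multiset of $w$ on $V_\chi$. Summing, $\chi_q(T_{\mathbf{w}}) = q^{(nh-h_\chi)\ell/d}\sum_i \mu_i = \chi(w)\,q^{(nh-h_\chi)\ell/d}$.

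Part (a) is the real content. I would compute $\omega_\chi(q)$ through its $\chi(1)$th power $\omega_\chi(q)^{\chi(1)} = \det A_0$, where $A_0$ denotes the action of $T_{\bm{\pi}}$ on $V_{\chi_q}$. Write $k_{\mathcal{C}}(\chi)$ for the number of non-unit eigenvalues of $s_H$ ($H\in\mathcal{C}$) acting on $V_\chi$. Taking determinants defines a linear character of $\mathcal{H}_q(W)$, hence it is determined by its values on braid reflections; comparing eigenvalue multiplicities of $T_{\mathbf{s}_H}$ on $V_{\chi_q}$ with those of $s_H$ on $V_\chi$ via \Cref{thm:tits} gives $\det(T_{\mathbf{s}_H}\text{ on }V_{\chi_q}) = q^{\chi(1) - k_{\mathcal{C}}(\chi)}\,\det(s_H\text{ on }V_\chi)$, a power of $q$ times an $e_{\mathcal{C}}$th root of unity. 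Using the standard fact that the image of $\bm{\pi}$ in $B(W)^{\mathrm{ab}} \cong \bigoplus_{\mathcal{C}\in\mathcal{A}/W}\mathbb{Z}$ has $\mathcal{C}$th coordinate $e_{\mathcal{C}}|\mathcal{C}|$ (so that $\sum_{\mathcal{C}} e_{\mathcal{C}}|\mathcal{C}| = |\mathcal{R}| + |\mathcal{A}| = nh$), raising the previous expression to these powers kills the roots of unity since $\zeta_{e_{\mathcal{C}}}^{e_{\mathcal{C}}} = 1$, and one obtains $\omega_\chi(q) = q^{m_\chi}$ with $m_\chi = nh - \frac{1}{\chi(1)}\sum_{\mathcal{C}} e_{\mathcal{C}}|\mathcal{C}|\,k_{\mathcal{C}}(\chi)$. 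Finally I would recognize the subtracted sum as $h_\chi$: writing each reflection in the orbit $\mathcal{C}$ as a power of $s_H$, one has $\sum_{k=1}^{e_{\mathcal{C}}-1}\chi(s_H^k) = (e_{\mathcal{C}}-1)\chi(1) - e_{\mathcal{C}}\,k_{\mathcal{C}}(\chi)$ (a non-unit eigenvalue contributes $-1$ and a unit eigenvalue contributes $e_{\mathcal{C}}-1$ to the inner sum), whence $h_\chi = |\mathcal{R}| - \frac{1}{\chi(1)}\sum_{r\in\mathcal{R}}\chi(r) = \frac{1}{\chi(1)}\sum_{\mathcal{C}} e_{\mathcal{C}}|\mathcal{C}|\,k_{\mathcal{C}}(\chi)$.

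The main obstacle is part (a), and inside it the only non-formal ingredient is the abelianization value of $\bm{\pi}$; everything else is bookkeeping with \Cref{thm:tits}. Since part (a) is precisely what is being reformulated from \cite[Proposition 4.18]{broue1997} and \cite[Proposition 4.11]{douvropoulos2018enumerating}, one could also quote the scalar $\omega_\chi(\bm{\pi})$ directly and present only part (b); I would nonetheless keep the determinant computation so that the precise normalization $q^{nh-h_\chi}$, with no leftover root of unity, is transparent.
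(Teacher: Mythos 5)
The paper does not actually prove this proposition: it is stated as a reformulation of \cite[Proposition 4.18]{broue1997} following \cite[Proposition 4.11]{douvropoulos2018enumerating}, with no argument given. So your write-up does more than the paper, and the route you take is essentially the standard proof behind those citations: the full twist is central, so it acts on $V_{\chi_q}$ by a scalar $\omega_\chi(q)$, which is pinned down via determinants and the image of $\bm{\pi}$ in $B(W)^{\mathrm{ab}}$, and then roots of $\bm{\pi}^{\ell}$ are handled by an eigenvalue argument plus the specialization in \Cref{thm:tits}. Part (b) is sound: the eigenvalues of $\rho_{\chi_q}(T_{\mathbf{w}})$ are constants (roots of unity) times a fixed $d$th root of $q^{(nh-h_\chi)\ell}$, and the characteristic-polynomial square in \Cref{thm:tits} identifies those constants with the eigenvalues of $w$; note the coefficients there are Laurent polynomials, so there is no pole issue to worry about, and the aside ``legitimate because $nh \geq h_\chi$'' is a non sequitur (the branch normalized to $1$ at $q=1$ exists regardless of the sign of the exponent). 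Your closing identity $h_\chi = \frac{1}{\chi(1)}\sum_{\mathcal{C}} e_{\mathcal{C}}|\mathcal{C}|\,k_{\mathcal{C}}(\chi)$ is also correct, and it is legitimate to use, as you do, that the $\mathcal{C}$th coordinate of $\bm{\pi}$ in $B(W)^{\mathrm{ab}}$ is $e_{\mathcal{C}}|\mathcal{C}|$ (this is in \cite{broue1998complex} and should be cited); together with $\sum_{\mathcal{C}} e_{\mathcal{C}}|\mathcal{C}| = |\mathcal{R}|+|\mathcal{A}| = nh$ this is exactly where irreducibility of $W$ enters.

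The one genuine gap is in part (a), at ``one obtains $\omega_\chi(q) = q^{m_\chi}$.'' What the determinant computation yields is $\omega_\chi(q)^{\chi(1)} = \det\rho_{\chi_q}(T_{\bm{\pi}}) = q^{\chi(1)m_\chi}$; the phrase ``raising to these powers kills the roots of unity'' disposes of the factors $\det(s_H \text{ on } V_\chi)$ inside the determinant, but not of the ambiguity in extracting the $\chi(1)$th root: a priori $\omega_\chi(q) = \zeta\, q^{m_\chi}$ with $\zeta^{\chi(1)}=1$. (Already for the reflection representation of $S_3$ the determinant alone only gives $\pm q^{3}$.) The fix is the same specialization you invoke in (b): $\zeta$ is a constant, and under $q \mapsto 1$ the element $T_{\bm{\pi}}$ maps to $\pi(\bm{\pi}) = 1 \in W$, so $\omega_\chi$ specializes to $1$, forcing $\zeta = 1$. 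With that one line added, part (a) is complete. A cosmetic point: taking determinants gives a multiplicative character of $B(W)$ (composed with the representation), not a linear character of the algebra $\mathcal{H}_q(W)$; it factors through $B(W)^{\mathrm{ab}}$, which is all you actually use.
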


\begin{corollary}
\label{cor:charRootOfTwist}
    Suppose $W$ is an irreducible well-generated complex reflection group with Coxeter number $h$, and $c$ is a $\zeta_h$-regular element of $W$. Let $\mathbf{c} \in B(W)$ be a lift of $c$ such that $\mathbf{c}^h = \bm{\pi}$ (which exists by \Cref{thm:rootOfTwist}). Then
    \[
        \chi_q(T_{\mathbf{c}}^{-p}) = q^{(h_{\chi} - nh) p/ h}\operatorname{Feg}_{\chi}(e^{2\pi i  p/ h}),
    \]
    for $\chi$ an irreducible character of $W$ and $p$ an integer.
\end{corollary}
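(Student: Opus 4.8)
The plan is to deduce \Cref{cor:charRootOfTwist} from the preceding proposition by a direct substitution followed by a Galois-theoretic identification of the resulting character-value sum with a fake degree. First I would apply the proposition with $\mathbf{w} = \mathbf{c}$, $d = h$, and $\ell = 1$: since $\mathbf{c}^h = \bm{\pi}$, we get $\chi_q(T_{\mathbf{c}}) = \chi(c)\, q^{(nh - h_\chi)/h}$. Because $\chi_q$ is a character of an algebra (so multiplicative on powers only up to the fact that $T_{\mathbf{c}}$ is a single element acting invertibly), raising to the $-p$ power gives $\chi_q(T_{\mathbf{c}}^{-p}) = \chi_q(T_{\mathbf{c}^{-p}})$; one can either invoke the proposition again with the lift $\mathbf{c}^{-p}$ of $c^{-p}$, noting $(\mathbf{c}^{-p})^h = \bm{\pi}^{-p}$, or simply use that on the one-dimensional representation of the commutative subalgebra generated by $T_{\mathbf{c}}$ the character is a ring homomorphism. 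Either way one obtains
\[
    \chi_q(T_{\mathbf{c}}^{-p}) = \chi(c^{-p})\, q^{(h_\chi - nh)p/h}.
\]

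Next I would identify $\chi(c^{-p})$ with $\operatorname{Feg}_\chi(e^{2\pi i p/h})$. The key input is the relation between regular elements and fake degrees: if $c$ is $\zeta_h$-regular with regular eigenvector $v$ (eigenvalue $\zeta_h$), then by Springer's theory of regular elements the eigenvalues of $c$ on $V$ are $\zeta_h^{-e_i}$ (or $\zeta_h^{e_i}$, depending on conventions for the action on $V$ versus $V^*$), and more generally the trace of $c^{-p}$ on the irreducible module with character $\chi$ is $\sum_i \zeta_h^{p\, e_i(\chi)} = \operatorname{Feg}_\chi(\zeta_h^p) = \operatorname{Feg}_\chi(e^{2\pi i p/h})$. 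This is the statement, essentially due to Springer (and recorded e.g.\ in \cite{springer1974regular}), that the character value of a $\zeta_h$-regular element is obtained by evaluating the fake degree at the appropriate root of unity; it is exactly the principle already used implicitly in \Cref{ex:galoisFeg}. I would cite this and check the sign/exponent convention carefully against the definition $\operatorname{Feg}_\chi(q) = \sum q^{e_i(\chi)}$ using $\operatorname{Sym}(V^*)$, since the excerpt is careful (see \Cref{rem:fegDef}) about the $V$ versus $V^*$ discrepancy and about which power of $c$ appears.

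Combining the two displays gives the claimed formula. The main obstacle, and the only place requiring genuine care rather than bookkeeping, is getting the exponent conventions consistent: one must pin down (i) whether $c$ or $c^{-1}$ has eigenvalues $\zeta_h^{e_i}$ on $V$ under the chosen action $(g\cdot p)(v) = p(g^{-1}v)$, (ii) that the fake degree exponents $e_i(\chi)$ are read off from $\operatorname{Sym}(V^*)$ as in the paper's definition rather than the GAP3 convention, and (iii) that passing from $\chi(c)$ to $\chi(c^{-p})$ via the proposition with lift $\mathbf{c}^{-p}$ is legitimate (the proposition is stated for an arbitrary lift $\mathbf{w}$ with $\mathbf{w}^d = \bm{\pi}^\ell$, so applying it with $\mathbf{w} = \mathbf{c}^{-p}$, $d = h$, $\ell = -p$ is immediate). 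Once these conventions are fixed, the argument is a two-line substitution, so I would keep the proof short and devote the prose to making the Springer identification and the sign conventions explicit.
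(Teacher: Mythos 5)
Your proposal matches the paper's proof: the paper deduces the corollary exactly by applying the preceding proposition to the lift $\mathbf{c}^{-p}$ of $c^{-p}$ (so $d = h$, $\ell = -p$) and then invoking \cite[Proposition 4.5]{springer1974regular} to identify $\chi(c^{-p})$ with $\operatorname{Feg}_{\chi}(e^{2\pi i p/h})$, which is precisely your main route. The only caveat is your parenthetical alternative that the character is a ring homomorphism on the commutative subalgebra generated by $T_{\mathbf{c}}$ --- that is false for higher-dimensional $\chi$ and should be dropped, but it is not needed since your primary argument (and the paper's) goes through the proposition with $\mathbf{w} = \mathbf{c}^{-p}$.
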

\begin{proof}
    This follows from the previous result and \cite[Proposition 4.5]{springer1974regular}.
\end{proof}

Returning to Schur elements, by \cite[Proposition 2.5]{broue1999towards}, there exist $S_\chi(\mathbf{v}) \in \mathbb{Z}_{k_W}[\mathbf{v}^{\pm}]$ for $\chi \in \operatorname{Irr}(W)$, such that
\[
    \tau = \sum_{\chi \in \operatorname{Irr}(W)} \frac{1}{S_{\chi}(\mathbf{v})} \chi_{\mathbf{v}},
\]
where $\tau$ is the canonical symmetrizing trace on $\mathcal{H}(W)$ and $\mathbb{Z}_{k_W}$ is the ring of integers of $k_W$. These Schur elements have been computed even in the cases for which \Cref{assum:symTrace} is still open (see \cite{malle2000generic} for instance). The specialization of the canonical symmetrizing trace on $\mathcal{H}(W)$ is a symmetrizing trace $\tau_q$ on $\mathcal{H}_q(W)$ whose Schur elements, which will be denoted $S_{\chi}(q)$, are the images of the Schur elements of $\mathcal{H}(W)$ via the specialization $\theta_q$.

We now define the class of spetisal complex reflection groups.
\begin{definition}
\label{def:spets}
    An complex reflection group is called \defn{spetsial} if all of its irreducible components $W$ satisfy any of the following equivalent conditions (equivalence of the conditions is shown in \cite[Proposition 8.1]{malle2000generic}):
    \begin{enumerate}
        \item $S_{\bm{1}}(q) = P_W$, where $\bm{1}$ denotes the trivial representation of $W$
        \item $P_W / S_{\chi}(q) \in k_W(q)$ for all $\chi \in \operatorname{Irr}(W)$
        \item $P_W / S_{\chi}(q) \in k_W[y]$ for all $\chi \in \operatorname{Irr}(W)$, where $y^{|\mu(k_W)|} = q$
        \item $W$ is one of the following groups:
        \[
            W(A_{n-1}), \quad G(m,1,n), \quad G(m,m,n), \quad G_i \text{ where } i \in \{4,6,8,14,23,\dots,30,32,\dots,37\}.
        \]
    \end{enumerate}
\end{definition}

\begin{definition}
\label{def:genDeg}
    For $W$ an irreducible spetsial complex reflection group, define the \defn{generic degree} of an irreducible character $\chi$ by
    \[
        \operatorname{Deg}_\chi (q) := P_W / S_{\chi}(q) \in k_W(q).
    \]
    Let $a_\chi$ be the largest non-negative integer such that $q^{a_\chi}$ divides $\operatorname{Deg}_\chi(q)$, and let $A_\chi$ be the degree of $q$ in $\operatorname{Deg}_\chi(q)$. Then $h_\chi = a_\chi + A_\chi$ \cite[Corollary 6.9]{broue1999towards}.
\end{definition}

Similarly, define $b_\chi$ to be the smallest integer $i \geq 0$ such that $\chi$ occurs in the character of the $i$th symmetric power of the reflection representation of $W$. That is, $b_\chi$ is the largest non-negative integer such that $q^{b_\chi}$ divides the fake degree of $\chi^*$. Also, let $B_\chi$ be the degree in $q$ of the fake degree of $\chi^*$.

A representation $\psi \in \operatorname{Irr}(W)$ is \defn{special} if $a(\psi) = b(\psi)$. If $W$ is a spetsial group, then for all $\chi \in \operatorname{Irr}(W)$, we have $a(\chi) \leq b(\chi)$ and there exists a special $\psi \in \operatorname{Irr}(W)$ with $a(\chi) = a(\psi)$ \cite[Proposition 8.1]{malle2000generic}.

\begin{proposition}
\label{thm:traceRootOfTwist}
    Suppose $W$ is an irreducible spetsial complex reflection group with Coxeter number $h$, and let $c$ be a $\zeta_h$-regular element of $W$. Let $\mathbf{c} \in B(W)$ be a lift of $c$ such that $\mathbf{c}^h = \bm{\pi}$. Then
    \[
        \tau_q(T_{\mathbf{c}}^{-p}) = \frac{1}{P_W} \sum_{\chi \in \operatorname{Irr}(W)} q^{(h_\chi - nh)p/h} \operatorname{Feg}_\chi(e^{2\pi i p/h}) \operatorname{Deg}_\chi(q),
    \]
    for $\chi$ an irreducible character of $W$.
\end{proposition}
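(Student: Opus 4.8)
The plan is to read the identity off from the character-theoretic description of the symmetrizing trace, combined with \Cref{cor:charRootOfTwist}. First I would record that $T_{\mathbf{c}}$ is invertible in $\mathcal{H}_q(W)$: each braid reflection satisfies a monic relation whose constant term $\prod_j u_{\mathcal{C},j}$ specializes to a unit, so every $T_{\mathbf{g}}$ with $\mathbf{g}\in B(W)$ is a unit, and in particular $T_{\mathbf{c}}^{-p}$ makes sense for all $p\in\mathbb{Z}$. Next, passing to the splitting field $k_W(y)$ of \Cref{thm:splittingField} with $y^{|\mu(k_W)|}=q$, Tits' deformation theorem (\Cref{thm:tits}, applied to the admissible specialization $\theta_q$) shows that $\mathcal{H}_q(W)\otimes k_W(y)$ is split semisimple, with irreducible characters $\{\chi_q : \chi\in\operatorname{Irr}(W)\}$ indexed compatibly by $\operatorname{Irr}(W)$ and with Schur elements the specializations $S_\chi(q)$. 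Applying the decomposition formula for symmetrizing traces on split semisimple algebras recalled above — equivalently, specializing the generic decomposition $\tau=\sum_\chi S_\chi(\mathbf{v})^{-1}\chi_{\mathbf{v}}$ via $\theta_q$ — gives
\[
    \tau_q = \sum_{\chi\in\operatorname{Irr}(W)} \frac{1}{S_\chi(q)}\,\chi_q,
\]
with each $S_\chi(q)\neq 0$ since the algebra is semisimple.

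Then I would simply evaluate at $T_{\mathbf{c}}^{-p}$. By \Cref{cor:charRootOfTwist} (this is where the $\zeta_h$-regularity of $c$ and the relation $\mathbf{c}^h=\bm{\pi}$ are used, via the hypotheses of that corollary, which hold since spetsial groups are well-generated) one has $\chi_q(T_{\mathbf{c}}^{-p}) = q^{(h_\chi - nh)p/h}\operatorname{Feg}_\chi(e^{2\pi i p/h})$ for every $\chi$. Since $W$ is spetsial, \Cref{def:genDeg} gives $1/S_\chi(q) = \operatorname{Deg}_\chi(q)/P_W$. Substituting both identities into the displayed sum produces
\[
    \tau_q(T_{\mathbf{c}}^{-p}) = \frac{1}{P_W}\sum_{\chi\in\operatorname{Irr}(W)} q^{(h_\chi - nh)p/h}\,\operatorname{Feg}_\chi\!\left(e^{2\pi i p/h}\right)\operatorname{Deg}_\chi(q),
\]
which is the asserted formula.

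There is no real obstacle here: the statement is essentially a bookkeeping assembly of results already in place. The only points deserving a sentence of justification are that $\theta_q$ is admissible — so that \Cref{thm:tits} applies and the $\chi_q$ are genuinely indexed by $\operatorname{Irr}(W)$ with Schur elements $S_\chi(q)=\theta_q(S_\chi(\mathbf{v}))$, as in the discussion following \Cref{thm:tits} — and that the term-by-term specialization of $\tau=\sum_\chi S_\chi(\mathbf{v})^{-1}\chi_{\mathbf{v}}$ under $\theta_q$ is legitimate, which for spetsial $W$ is immediate because $S_\chi(q)^{-1}P_W=\operatorname{Deg}_\chi(q)\in k_W[y]$ has no pole, so all intermediate expressions lie in $k_W(q)$. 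With these remarks the chain of equalities above is valid, and nothing case-by-case is required.
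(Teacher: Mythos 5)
Your proposal is correct and follows essentially the same route as the paper: decompose $\tau_q = \sum_{\chi} S_\chi(q)^{-1}\chi_q$, note that spetsial groups are well-generated so that \Cref{cor:charRootOfTwist} applies to give $\chi_q(T_{\mathbf{c}}^{-p}) = q^{(h_\chi - nh)p/h}\operatorname{Feg}_\chi(e^{2\pi i p/h})$, and substitute $1/S_\chi(q) = \operatorname{Deg}_\chi(q)/P_W$ from \Cref{def:genDeg}. The extra remarks on invertibility of $T_{\mathbf{c}}$ and the legitimacy of specializing the generic trace are fine but not needed beyond what the paper already records.
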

\begin{proof}
    It is straightforward to check case-by-case that an irreducible spetsial complex reflection group is well-generated. The result then follows from \Cref{cor:charRootOfTwist}.
\end{proof}

\begin{theorem}
\label{thm:galDeg}
    Suppose $W$ is an irreducible well-generated complex reflection group with Coxeter number $h$. Then for $p$ relatively prime to $h$,
    \[
        [S_{\bm 1} / S_{\chi}] (e^{2 \pi i p/ h}) = \begin{cases}
            (-1)^k & \text{if } \chi = \chi_{k,p}, \\
            0 & \text{otherwise},
        \end{cases}
    \]
    for $\chi$ an irreducible character of $W$ and $\chi_{k,p}$ the character of $\Lambda^k V^{\sigma_p}$. In particular, if $W$ is spetsial, then
    \[
        \operatorname{Deg}_\chi (e^{2 \pi i p/ h}) = \begin{cases}
            (-1)^k & \text{if } \chi = \chi_{k,p}, \\
            0 & \text{otherwise}.
        \end{cases}
    \]
\end{theorem}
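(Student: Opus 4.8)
The plan is to evaluate the Schur elements $S_\chi(q)$ at $q = e^{2\pi i p/h} = \zeta_h^p$ and determine exactly which ones have a finite nonzero value there. The natural starting point is the product formula for Schur elements. For the spetsial Hecke algebra, $S_\chi(q)$ is a product of cyclotomic-type factors (this is the ``cyclotomic Hecke algebra'' structure, and the explicit formulas are collected in \cite{malle2000generic}); equivalently one can work with $\operatorname{Deg}_\chi(q) = P_W/S_\chi(q)$, which for spetsial $W$ lies in $k_W[y]$ and is a product of factors of the form $q^{a}\prod (q^{d}-\zeta)$ up to a root of unity. The first step is therefore to reduce the claim to an order-of-vanishing statement: I want to show that $\operatorname{Deg}_\chi(\zeta_h^p)$ is nonzero if and only if $\chi$ is one of the $\chi_{k,p}$, and to pin down the value in that case.

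The key tool for the ``only if'' direction is \Cref{cor:charRootOfTwist} together with \Cref{thm:traceRootOfTwist}: the quantity $\chi_q(T_{\mathbf c}^{-p}) = q^{(h_\chi - nh)p/h}\operatorname{Feg}_\chi(e^{2\pi i p/h})$ is, up to the power of $q$, the generic degree's ``partner'' in the trace formula, and the product $\operatorname{Feg}_\chi(\zeta_h^p)\operatorname{Deg}_\chi(\zeta_h^p)$ is what actually appears. A cleaner route is to use the fact (Springer theory, \cite[Proposition 4.5]{springer1974regular} / \cite[Proposition 8.1.2]{stump2020cataland}) that $\operatorname{Feg}_\chi(\zeta_h^p) \neq 0$ if and only if $\chi$ is a Galois twist power $\chi_{k,p}$ of the reflection representation, since the fake degree exponents of such a $\chi$ must all be distinct modulo $h$ in a way forced by the $\zeta_h$-regular element; in fact $\operatorname{Feg}_{\chi_{k,p}}(\zeta_h^p)$ can be computed directly from the product formula for the fake degree of $\Lambda^k V^{\sigma_p}$ in \Cref{ex:galoisFeg}, using that $\{e_i(V^{\sigma_p})\} = \{pe_i \bmod h\}$ and the identity $\{e_1,\dots,e_n\} \cup \{h - e_1,\dots\}$ symmetry. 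Then, since the generic degree and the fake degree have the same ``$a + A$'' total degree $h_\chi$ and are closely tied at roots of unity, a parallel computation evaluates $\operatorname{Deg}_{\chi_{k,p}}(\zeta_h^p)$; I expect this to come out to $(-1)^k$ after the dust settles, matching the elementary-symmetric-function sign $(-1)^k$ that appears because $\Lambda^k$ contributes a sign.

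Concretely the steps I would carry out are: (1) recall the product formula for $S_{\bm 1}(q) = P_W$ and for $S_{\chi_{k,p}}(q)$ (or equivalently the Orlik--Solomon style formula for $\operatorname{Deg}_{\Lambda^k V^{\sigma_p}}$ via \Cref{ex:galoisFeg}), reducing to the infinite families $G(m,1,n)$, $G(m,m,n)$ and $W(A_{n-1})$ plus a finite check; (2) for the infinite families, write $\operatorname{Deg}_\chi(q)$ explicitly and evaluate at $\zeta_h^p$, showing the numerator $P_W$ vanishes to exactly the order needed to cancel the vanishing of $S_\chi(q)$ precisely when $\chi = \chi_{k,p}$, and otherwise $P_W(\zeta_h^p)$ vanishes strictly faster than $S_\chi(\zeta_h^p)$ forcing $\operatorname{Deg}_\chi(\zeta_h^p) = 0$; (3) compute the leading nonzero value and identify it as $(-1)^k$; (4) handle the exceptional spetsial groups $G_i$ by the computer check \cite{Code}. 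The main obstacle is step (2): matching the vanishing orders of $P_W$ and $S_\chi$ at $\zeta_h^p$ uniformly requires knowing which degrees $d_i$ and which cyclotomic factors of $S_\chi$ are divisible by the relevant cyclotomic polynomial $\Phi_{h/\gcd(h,\cdot)}$, and the cleanest way through is to leverage \Cref{cor:charRootOfTwist} backwards — the left side $\chi_q(T_{\mathbf c}^{-p})$ is a single monomial times $\operatorname{Feg}_\chi(\zeta_h^p)$, and $\operatorname{Feg}_\chi(\zeta_h^p) = 0$ unless $\chi = \chi_{k,p}$, which already isolates the support; then only the value computation for $\operatorname{Deg}_{\chi_{k,p}}(\zeta_h^p)$ remains, and that I would do from the explicit generic degree formulas family-by-family in \Cref{sec:spetses}.
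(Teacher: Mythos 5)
There is a genuine gap, and it sits exactly at the step you yourself flag as ``the main obstacle.'' Your proposed shortcut --- ``the cleanest way through is to leverage \Cref{cor:charRootOfTwist} backwards,'' claiming that $\operatorname{Feg}_\chi(e^{2\pi i p/h})\neq 0$ if and only if $\chi=\chi_{k,p}$, ``which already isolates the support'' --- fails for two independent reasons. First, the claimed fact about fake degrees is false. Take $W=W(B_2)$, $h=4$, $p=1$: the reflection character has $\operatorname{Feg}_V(q)=q+q^3$, which vanishes at $q=i$ even though $V=\chi_{1,1}$, while the two linear characters that are \emph{not} exterior powers of Galois twists have fake degree $q^2$ and value $-1\neq 0$ at $q=i$. (Springer's result, via \Cref{cor:charRootOfTwist}, identifies $\operatorname{Feg}_\chi(\zeta_h^p)$ with a character value of $c^{-p}$, and such values are nonzero for many non-hook-type characters outside type $A$; neither \cite[Proposition 4.5]{springer1974regular} nor \cite[Proposition 8.1.2]{stump2020cataland} asserts the support statement you attribute to them.) Second, even if the fake-degree support were exactly $\{\chi_{k,p}\}$, the theorem is about the generic degrees $\operatorname{Deg}_\chi=S_{\bm 1}/S_\chi$, and vanishing of $\operatorname{Feg}_\chi$ at $\zeta_h^p$ neither implies nor is implied by vanishing of $\operatorname{Deg}_\chi$ there: in the same $B_2$ example, $\operatorname{Deg}_V(i)=-1\neq 0$ while $\operatorname{Feg}_V(i)=0$, and the two non-exterior linear characters have $\operatorname{Deg}(i)=0$ but $\operatorname{Feg}(i)\neq 0$. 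Transferring information between the $\operatorname{Feg}$-side and the $\operatorname{Deg}$-side is precisely the content of the Fourier-transform machinery (\Cref{lem:transform}, \Cref{conj:transform}), and even that only relates the two \emph{sums}, never the supports character-by-character; using \Cref{thm:galDeg} to justify that machinery and the machinery to justify \Cref{thm:galDeg} would be circular.

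With that shortcut removed, what remains of your outline --- compute $S_{\chi_{k,p}}$ (or $\operatorname{Deg}_{\chi_{k,p}}$) explicitly for the infinite families, evaluate at $\zeta_h^p$ to get $(-1)^k$, and handle the exceptional groups by computer --- is in fact the paper's route, but you have not supplied the argument for the hard half: showing that $S_\chi(\zeta_h^p)\neq 0$ (equivalently $\operatorname{Deg}_\chi(\zeta_h^p)=0$, since $P_W$ vanishes to order exactly one at $\zeta_h^p$) for every $\chi$ that is not an exterior power of a Galois twist. The paper does this by a direct case analysis: for $G(m,1,n)$, it examines which factors $\bigl(q^{h_{i,j}^{\lambda^{(s)},\lambda^{(t)}}}Q_sQ_t^{-1}-1\bigr)$ in the Chlouveraki--Jacon product formula (\Cref{thm:imprimSchurSimp}) can vanish at $\zeta_{mn}^p$, and shows the divisibility constraints force $\bm\lambda=(n-k,\varnothing,\dots,1^k,\dots,\varnothing)$ with the $1^k$ in the $p$-th slot; for $G(m,m,n)$ it runs an analogous analysis on Malle's $m$-symbols and the explicit generic degree formula of \Cref{def:uniDeg2}. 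Some argument of this kind (matching cyclotomic factors of $S_\chi$ against the single factor $[h]_q$ of $P_W$) is unavoidable, so to complete your proof you would need to carry out that family-by-family vanishing analysis rather than appeal to the fake-degree support.
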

\begin{proof}
    For the symmetric groups (in fact, all real groups), the proof is sketched in \cite[remark 6.10]{galashin2022rational}. For the exceptional irreducible well-generated groups, this is checked by computer \cite{Code}. The proof for the groups $G(m,1,n)$ and $G(m,m,n)$ follows the proof of the ``untwisted case'' in \cite[Proposition 4]{michel2022tower} and will be given in the next section.
\end{proof}

\section{Finite groups of Lie type}
\label{sec:FGOLT}

This section serves primarily to motivate the constructions in the subsequent sections by explaining what is going on for Weyl groups. Most of the material in this section can be found in \cite{carter1985finite}, \cite{digne2020representations}, and \cite{geck2020character}. For background on (linear) algebraic groups, the standard references are \cite{borel2012linear}, \cite{humphreys2012linear}, and \cite{springer1998linear}. See also \cite{geck2013introduction} for a slightly more elementary introduction. Our conventions and notation will usually follow \cite{geck2020character}.

We fix the following objects for this section: Let $W$ be an irreducible Weyl group with generating set of simple reflections $S$, and let $q$ be a power of a prime $p$. Let $\mathbf{G}$ be a simple connected reductive group over $\overline{\mathbb{F}}_p$ with connected center which has Weyl group $W$, and let $F : \mathbf{G} \to \mathbf{G}$ be a Frobenius map with respect to some $\mathbb{F}_q$-rational structure which acts trivially on $W$. We denote by $\mathbf{G}^F$ the corresponding finite group of Lie type and fix a maximally split torus $\mathbf{T}_0$.

\subsection{Uniform almost characters and principal series characters}

A character $\rho \in \operatorname{Irr}(\mathbf{G}^F)$ is called a \defn{unipotent character} if $\langle R_{\mathbf{T}}^{\mathbf{G}}(1_{\mathbf{T}}), \rho \rangle \neq 0$ for some $F$-stable maximal torus $\mathbf{T} \subseteq \mathbf{G}$. Here $1_{\mathbf{T}}$ is the trivial character of $\mathbf{T}^F$ and $R_{\mathbf{T}}^{\mathbf{G}}(1_{\mathbf{T}})$ is the induced Deligne-Lusztig character of $\mathbf{G}^F$. We denote by $\operatorname{Uch}(\mathbf{G}^F)$ the set of all unipotent characters of $\mathbf{G}^F$. We now discuss two important subsets of the unipotent characters of $\mathbf{G}^F$ whose degrees are equal to the fake degrees and generic degrees, respectively, of corresponding characters of $W$ evaluated at $q$.

Any maximal torus of $\mathbf{G}$ has the form ${}^g\mathbf{T}_0 := g \mathbf{T}_0 g^{-1}$ for some $g \in \mathbf{G}$. Moreover, ${}^g\mathbf{T}_0$ is $F$-stable if and only if $g^{-1}F(g) \in N_{\mathbf{G}}(\mathbf{T}_0)$, and the map ${}^g\mathbf{T}_0 \mapsto \pi(g^{-1}F(g))$ determines a bijection between the $\mathbf{G}^F$-classes of $F$-stable maximal tori of $G$ and the conjugacy classes of $W$. Here $\pi : N_{\mathbf{G}}(\mathbf{T}_0) \to W$ is the quotient map. If $w \in W$ is conjugate to $\pi(g^{-1}F(g))$, we say that $\mathbf{T}_w := {}^g\mathbf{T}_0$ is obtained from $\mathbf{T}_0$ by twisting with $w$. We denote by $R_w$ the Deligne-Lusztig character $R_{\mathbf{T}_w}^\mathbf{G}(1_{\mathbf{T}_w^F})$.

For each $\chi \in \operatorname{Irr}(W)$, we then define the \defn{unipotent uniform almost character} 
\[
    R_{\chi} := \frac{1}{|W|}\sum_{w \in W} \chi(w)R_w,
\]
which satisfies $R_\chi(1) = \operatorname{Feg}_\chi(q)$.

The elements of the Harish-Chandra series $\mathcal{E}(\mathbf{G}^F,(\mathbf{T}_0^F,1))$ are called the \defn{unipotent principal series characters} of $\mathbf{G}^F$. A choice of a square root of $q$ determines a canonical bijection (see \cite[Example 3.2.6]{geck2020character})
\[
    \operatorname{Irr}(W) \to \mathcal{E}(\mathbf{G}^F,(\mathbf{T}_0^F,1)) \qquad \chi \mapsto \rho_\chi.
\]

\begin{proposition}
\label{prop:principalGeneric}
    Every $\rho_\chi \in \mathcal{E}(\mathbf{G}^F,(\mathbf{T}_0^F,1))$ satisfies $\rho_\chi(1) = \operatorname{Deg}_\chi(q)$.
\end{proposition}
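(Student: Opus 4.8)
The plan is to realize $\mathcal{E}(\mathbf{G}^F,(\mathbf{T}_0^F,1))$ through the Iwahori--Hecke algebra and then invoke the general formula for the degree of a constituent of an induced module in terms of the Schur element of the corresponding module over the endomorphism algebra. First I would recall that, by definition of the Harish-Chandra series (and since Deligne--Lusztig induction from a split torus is ordinary induction), $\mathcal{E}(\mathbf{G}^F,(\mathbf{T}_0^F,1))$ is exactly the set of irreducible constituents of $\operatorname{Ind}_{\mathbf{B}_0^F}^{\mathbf{G}^F}(1)$ for an $F$-stable Borel $\mathbf{B}_0 \supseteq \mathbf{T}_0$. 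The Bruhat decomposition $\mathbf{G}^F = \bigsqcup_{w\in W} \mathbf{B}_0^F w \mathbf{B}_0^F$ yields a standard basis $\{A_w : w \in W\}$ of $\mathcal{E} := \operatorname{End}_{\mathbf{G}^F}\big(\operatorname{Ind}_{\mathbf{B}_0^F}^{\mathbf{G}^F}(1)\big)$, and Iwahori's theorem identifies $\mathcal{E}$ with the specialization at $q$ of the generic one-parameter Hecke algebra of $W$, i.e. with $\mathcal{H}_q(W)$ in the notation of this paper (for a Weyl group every $e_{\mathcal{C}}$ equals $2$, so $\mathcal{H}_q(W)$ is the Iwahori--Hecke algebra with quadratic relation $(T_s-q)(T_s+1)=0$). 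Under this identification $A_w \mapsto T_w$, and the intrinsic symmetrizing form on $\mathcal{E}$ --- which reads off the coefficient of $A_1$ in the standard-basis expansion of an endomorphism --- corresponds to the canonical symmetrizing trace $\tau_q$ of \Cref{ex:trace}, for which $\tau_q(T_w)=\delta_{1w}$.

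Next, since $q$ is a prime power none of the Schur elements $S_\chi(q)$ vanishes, so $\mathcal{H}_q(W)$ is split semisimple over $k_W(q^{1/2})$ (this is where the chosen square root of $q$ enters). By the double-centralizer theorem, $V \mapsto \rho_V$ --- where $\rho_V$ is the constituent of $\operatorname{Ind}_{\mathbf{B}_0^F}^{\mathbf{G}^F}(1)$ whose $\mathbf{B}_0^F$-fixed space is the simple $\mathcal{E}$-module $V$ --- is a bijection $\operatorname{Irr}(\mathcal{H}_q(W)) \to \mathcal{E}(\mathbf{G}^F,(\mathbf{T}_0^F,1))$, and composing with Tits' deformation bijection $\operatorname{Irr}(\mathcal{H}_q(W)) \xrightarrow{\sim} \operatorname{Irr}(W)$ recovers precisely the canonical bijection $\chi \mapsto \rho_\chi$ named before the statement. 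Tits' deformation preserves dimensions, so the multiplicity of $\rho_\chi$ in $\operatorname{Ind}_{\mathbf{B}_0^F}^{\mathbf{G}^F}(1)$ equals $\dim V_\chi = \chi(1)$.

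Finally I would apply the general relation (Curtis--Iwahori--Kilmoyer; see \cite{carter1985finite} and \cite{geck2000characters}) between the degree of such a constituent and the Schur element of the corresponding $\mathcal{E}$-module: for a split semisimple symmetric endomorphism algebra $\mathcal{E}=\operatorname{End}_G(\operatorname{Ind}_H^G 1)$ equipped with the symmetrizing form above, the constituent $\rho_V$ attached to a simple $\mathcal{E}$-module $V$ has degree
\[
    \rho_V(1) = \frac{[G:H]}{S_V},
\]
where $S_V$ is the Schur element of $V$ for that form. Here $[\mathbf{G}^F:\mathbf{B}_0^F] = \sum_{w\in W} q^{\ell(w)} = P_W$ is the point count of the flag variety, and $S_{V_\chi} = S_\chi(q)$ by the normalization matched in the first step. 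Hence $\rho_\chi(1) = P_W/S_\chi(q) = \operatorname{Deg}_\chi(q)$, as claimed.

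The main obstacle is the bookkeeping of normalizations in the first two steps: verifying that Iwahori's isomorphism carries the intrinsic symmetrizing form of $\operatorname{End}_{\mathbf{G}^F}\big(\operatorname{Ind}_{\mathbf{B}_0^F}^{\mathbf{G}^F}(1)\big)$ exactly to the canonical trace $\tau_q$ --- so that its Schur elements are the $S_\chi(q)$ used to define $\operatorname{Deg}_\chi$ --- and that the square-root-of-$q$ choice fixing $\chi\mapsto\rho_\chi$ agrees with the one splitting $\mathcal{H}_q(W)$. These identifications are standard and are carried out in detail in \cite{carter1985finite} and \cite{geck2020character}; once they are pinned down the degree formula gives the result immediately.
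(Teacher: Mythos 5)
Your proposal is correct and follows essentially the same route as the paper: both realize $\rho_\chi$ inside the permutation module $\mathbb{C}[\mathbf{G}^F/\mathbf{B}_0^F]\cong R_{\mathbf{T}_0}^{\mathbf{G}}(1)$, identify its endomorphism algebra with $\mathcal{H}_q(W)$ carrying (a multiple of) the canonical trace $\tau_q$ via the Bruhat-decomposition basis $T_w$, transport the labeling through Tits' deformation theorem, and conclude with the Schur-element degree formula $\rho_\chi(1)=[\mathbf{G}^F:\mathbf{B}_0^F]/S_\chi(q)$. The only cosmetic difference is the last step, where the paper pins down the constant via $\rho_{\bm 1}(1)=1$ while you evaluate $[\mathbf{G}^F:\mathbf{B}_0^F]=\sum_{w\in W}q^{\ell(w)}=P_W$ directly.
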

\begin{proof}
    We will include an outline of the proof following \cite[\S 6.1-6.3]{digne2020representations} and \cite[\S 4.2-4.3]{geck2011representations}, but see also \cite[Theorem 3.2.18]{geck2020character}.

    As described in \cite[Example 3.2.6]{geck2020character}, the spetsial Hecke algebra $\mathcal{H}_q(W)$ ``evaluated'' at $q$ is isomorphic to $\operatorname{End}_{\mathbf{G}^F}\left(R_{\mathbf{T}_0}^\mathbf{G}(1)\right)^{\operatorname{opp}}$. The Hom functor
    \[
        \mathbb{C}\mathbf{G}^F\text{-mod} \to \mathcal{H}_q(W)\text{-mod}, \qquad Y \mapsto \operatorname{Hom}_{\mathbf{G^F}}\left(R_{\mathbf{T}_0}^{\mathbf{G}}(1),Y\right)
    \]
    induces (see \cite[Theorem 3.1.18]{geck2020character}) a bijection $\mathcal{E}(\mathbf{G}^F,(\mathbf{T}_0^F,1)) \to \operatorname{Irr}(\mathcal{H}_q(W))$. Composing this with the bijection from Tits' deformation theorem gives the correspondence $\chi \mapsto \rho_\chi$. As a $\mathcal{H}_q(W) \times \mathbb{C}\mathbf{G}^F$-module, $R_{\mathbf{T}_0}^{\mathbf{G}}(1)$ decomposes as 
    \[
        R_{\mathbf{T}_0}^{\mathbf{G}}(1) = \bigoplus_{\chi \in \operatorname{Irr}(W)} \chi_q \otimes \rho_\chi.
    \]
    In particular, the multiplicity of $\chi_q$ is equal to $\rho_\chi(1)$ \cite[Theorem 6.1.1]{digne2020representations}. Also, note that as a character of $\mathbf{G}^F$, we have $R_{\mathbf{T}_0}^{\mathbf{G}}(1) = \sum_{\chi \in \operatorname{Irr}(W)} \chi(1) \rho_\chi$.

    Let $\mathbf{B}_0$ be an $F$-stable Borel containing $\mathbf{T}_0$. Then $R_{\mathbf{T}_0}^{\mathbf{G}}(1) \cong \mathbb{C}[\mathbf{G}^F/\mathbf{B}_0^F]$ is the permutation representation of $\mathbf{G}^F$ on the cosets of $\mathbf{B}_0^F$. Fix a set of representatives $\{\bar{w} \in N_{\mathbf{G}}(\mathbf{T}_0)^F : w \in W\}$. The basis $\{T_w : w \in W\}$ of $\mathcal{H}_q(W)$ can be given explicitly using the Bruhat decomposition by defining $T_w : \mathbb{C}[\mathbf{G}^F / \mathbf{B}_0^F] \to \mathbb{C}[\mathbf{G}^F / \mathbf{B}_0^F]$ by
    \[
        T_w\left(x\mathbf{B}_0^F\right) = \text{ sum of all cosets $y\mathbf{B}_0^F \in \mathbb{C}[\mathbf{G}^F / \mathbf{B}_0^F]$ such that $x^{-1}y \in \mathbf{B}_0^F \bar{w} \mathbf{B}_0^F$}.
    \]
    From this, one can show that $\psi : T_w \mapsto \operatorname{Trace}\left( T_w \,\,\vert\,\, R_{\mathbf{T}_0}^{\mathbf{G}}(1)\right)$ is a multiple of the symmetrizing trace $\tau_q$ with $\psi(T_1) = \operatorname{dim} R_{\mathbf{T}_0}^{\mathbf{G}}(1) = [\mathbf{G}^F : \mathbf{B}^F]$ and $\psi(T_w) = 0$ for $w \neq 1$ (see the proof of \cite[Lemma 4.3.2]{geck2011representations}). Then by \cite[Corollary 6.3.10]{digne2020representations} for each $\chi \in \operatorname{Irr}(W)$ the multiplicity of $\chi_q$ in $R_{\mathbf{T}_0}^{\mathbf{G}}(1)$ is $[\mathbf{G}^F : \mathbf{B}^F] / S_\chi(q)$. Hence $\rho_\chi(1) = [\mathbf{G}^F : \mathbf{B}^F] / S_\chi(q)$. Since $\rho_{\bm{1}}(1) = 1$, it follows that $\operatorname{Deg}_\chi(q) = S_{\bm{1}}(q) / S_\chi(q) = \rho_\chi(1)$.
\end{proof}

\subsection{Families of unipotent characters}

Denote by $\varepsilon$ the sign character of $W$. Define a relation $\preceq$ on $\operatorname{Irr}(W)$ inductively as follows. For $W = \{1\}$, the trivial character is related to itself. Now assume that $W \neq \{1\}$ and that $\preceq$ has been defined for all proper parabolic subgroups of $W$. For $\phi, \phi' \in \operatorname{Irr}(W)$, we write $\phi \preceq \phi'$ if there is a sequence $\phi = \phi_0, \phi_1, \dots, \phi_m = \phi'$ such that for each $i \in \{1,\dots,m\}$ there exists a subset $I_i \subsetneq S$ and $\psi_i, \psi_i' \in \operatorname{Irr}(W_{I_i})$, where $\psi_i \preceq \psi_i'$, such that either
\[
    \langle \operatorname{Ind}_{I_i}^S(\psi_i), \phi_{i-1} \rangle \neq 0, \quad \langle \operatorname{Ind}_{I_i}^S(\psi_i'), \phi_i \rangle \neq 0, \quad \text{and} \quad a_{\psi_i'} = a_{\phi_i},
\]
or
\[
    \langle \operatorname{Ind}_{I_i}^S(\psi_i), \varepsilon \phi_{i} \rangle \neq 0, \quad \langle \operatorname{Ind}_{I_i}^S(\psi_i'), \varepsilon\phi_{i-1} \rangle \neq 0, \quad \text{and} \quad a_{\psi_i'} = a_{\varepsilon \phi_{i-1}}.
\]

\begin{definition}
\label{def:weylFam}
    We say that $\phi, \phi'$ belong to the same \defn{family} in $\operatorname{Irr}(W)$ if $\phi \preceq \phi'$ and $\phi' \preceq \phi$.
\end{definition}

One can show \cite[Proposition 4.1.19]{geck2020character} that the functions $\phi \mapsto a_\phi$ and $\phi \mapsto A_\phi$ are constant on the families of $\operatorname{Irr}(W)$. Moreover, in each family $\mathscr{F} \subseteq \operatorname{Irr}(W)$, there is a unique special character $\phi \in \mathscr{F}$ with $a_\phi = b_\phi$ \cite[Proposition 4.1.20]{geck2020character}.

Define a graph on the set of vertices $\operatorname{Uch}(\mathbf{G}^F)$ as follows: two unipotent characters $\rho_1, \rho_2 \in \operatorname{Uch}(\mathbf{G}^F)$ are joined if and only if there is an irreducible character $\chi \in \operatorname{Irr}(W)$ such that $\langle R_{\chi}, \rho_i \rangle \neq 0$ for $i = 1,2$. The sets of vertices corresponding to the connected components of the graph are called the \defn{families} in $\operatorname{Uch}(\mathbf{G}^F)$. There is another perspective on these families via their unipotent support \cite{geck2000existence}.

There is an induced partition of $\operatorname{Irr}(W)$ wherein two characters $\chi, \chi' \in \operatorname{Irr}(W)$ are equivalent if and only if $R_{\chi}$ and $R_{\chi'}$ have unipotent constituents lying in the same family of $\operatorname{Uch}(\mathbf{G}^F)$. This recovers the families of $\operatorname{Irr}(W)$.

Alternatively \cite[\S 12]{carter1985finite}, the families of $\operatorname{Irr}(W)$ can be recovered from the correspondence $\chi \to \rho_\chi$ identifying the irreducible characters of $\operatorname{Irr}(W)$ with the unipotent principle series characters. That is, $\chi, \chi' \in \operatorname{Irr}(W)$ are in the same family of $\operatorname{Irr}(W)$ if and only if $\rho_\chi,\rho_{\chi'}$ are in the same family of $\operatorname{Uch}(\mathbf{G}^F)$. Moreover, the family of $\operatorname{Uch}(\mathbf{G}^F)$ corresponding to $R_\chi$ is the same as the family corresponding to $\rho_\chi$. That is, $\rho_\chi$ is in the family of $\operatorname{Uch}(\mathbf{G}^F)$ containing the components of $R_\chi$.

Each $\rho \in \operatorname{Uch}(\mathbf{G}^F)$ has a \defn{degree polynomial} $\mathbb{D}_\rho \in \mathbb{R}[x]$, such that $\rho(1) = \mathbb{D}_\rho(q)$. See \cite[definition 2.3.25]{geck2020character} for a precise definition. For the unipotent uniform almost characters, the degree polynomial is the corresponding fake degree. For the unipotent principal series characters, the degree polynomial is the corresponding generic degree. We then define
\begin{align*}
    A_\rho &:= \text{degree of $x$ in } \mathbb{D}_\rho \\
    a_\rho &:= \text{largest non-negative integer such that $x^{a_\rho}$ divides } \mathbb{D}_\rho.
\end{align*}

\begin{proposition}[\cite{geck2020character}, Proposition 4.2.7]
    The $a$ and $A$ values are constant on each family of $\operatorname{Uch}(\mathbf{G}^F)$ and agree with the $a$ and $A$ values of the corresponding family of $\operatorname{Irr}(W)$.
\end{proposition}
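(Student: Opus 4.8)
The statement splits into two parts: that $\rho \mapsto a_\rho$ and $\rho \mapsto A_\rho$ are constant on each family $\mathcal{F} \subseteq \operatorname{Uch}(\mathbf{G}^F)$, and that these constants equal the invariants $a_\chi, A_\chi$ (themselves constant, by \cite[Proposition 4.1.19]{geck2020character}) on the family $\mathscr{F} \subseteq \operatorname{Irr}(W)$ matched to $\mathcal{F}$. The plan is to derive the second part cheaply from the first by restricting to the unipotent principal series, and to prove the first part by invoking Lusztig's explicit description of unipotent degrees within a family.

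For the reduction of the second part to the first: every $\rho \in \operatorname{Uch}(\mathbf{G}^F)$ has $\langle R_\chi, \rho \rangle \neq 0$ for some $\chi \in \operatorname{Irr}(W)$, and by the discussion above $\rho_\chi$ lies in the same family of $\operatorname{Uch}(\mathbf{G}^F)$ as every constituent of $R_\chi$, in particular as $\rho$; hence each family $\mathcal{F}$ contains a principal series character, and by the compatibility $\chi \leftrightarrow \rho_\chi$ it contains precisely the $\rho_\chi$ with $\chi \in \mathscr{F}$. For such a character the degree polynomial is the generic degree, $\mathbb{D}_{\rho_\chi} = \operatorname{Deg}_\chi$, so by \Cref{def:genDeg} we get $a_{\rho_\chi} = a_\chi$ and $A_{\rho_\chi} = A_\chi$. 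Therefore, once the first part is established, the constant value of $a$ (resp.\ $A$) on $\mathcal{F}$ equals $a_\chi$ (resp.\ $A_\chi$) for any $\chi \in \mathscr{F}$, which is the asserted agreement; this also re-proves the constancy of $a_\chi, A_\chi$ on $\mathscr{F}$.

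The first part is where the substance is, and it cannot be done without Lusztig's classification of $\operatorname{Uch}(\mathbf{G}^F)$. Concretely, I would fix a family $\mathcal{F}$, recall Lusztig's parametrization of $\mathcal{F}$ by the set $\mathcal{M}(\mathcal{G}_{\mathcal{F}})$ of pairs (element of an associated finite group $\mathcal{G}_{\mathcal{F}}$, irreducible character of its centralizer), and read off from the closed formula for the unipotent degree polynomials (the nonabelian Fourier transform expression) that for every $\rho \in \mathcal{F}$ the polynomial $\mathbb{D}_\rho(q)$ factors as $q^{a_{\mathcal{F}}}$ times a polynomial with nonzero constant term and degree $A_{\mathcal{F}} - a_{\mathcal{F}}$, where $a_{\mathcal{F}}$ and $A_{\mathcal{F}}$ depend only on $\mathcal{F}$; this yields $a_\rho = a_{\mathcal{F}}$ and $A_\rho = A_{\mathcal{F}}$ uniformly. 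An alternative route replaces this by the compatibility of the $a$-function on $\operatorname{Uch}(\mathbf{G}^F)$ with Lusztig's combinatorial $a$-function (constant on two-sided cells); either way the essential input is Lusztig's work, and this is the sole genuine obstacle---the remainder is bookkeeping.
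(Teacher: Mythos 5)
This proposition is not proved in the paper at all: it is imported verbatim from \cite[Proposition 4.2.7]{geck2020character}, so there is no in-paper argument to compare against. Judged on its own terms, your reduction of the ``agreement'' half is correct and uses exactly the facts the paper records: each family of $\operatorname{Uch}(\mathbf{G}^F)$ contains the principal series characters $\rho_\chi$ for $\chi$ in the matching family of $\operatorname{Irr}(W)$ (the paper notes that the family of $R_\chi$ is the family of $\rho_\chi$), and for these $\mathbb{D}_{\rho_\chi}=\operatorname{Deg}_\chi$ by \Cref{prop:principalGeneric}, so $a_{\rho_\chi}=a_\chi$ and $A_{\rho_\chi}=A_\chi$; constancy on $\operatorname{Uch}$-families then forces agreement with the constants on $\operatorname{Irr}(W)$-families.

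The gap is in the constancy half. Writing $\mathbb{D}_\rho=\sum_{\chi\in\mathscr{F}}\Delta(x_\rho)\{x_\rho,x_\chi\}\operatorname{Feg}_\chi$ and asserting that one can ``read off'' that $\mathbb{D}_\rho$ is $q^{a_{\mathscr{F}}}$ times a polynomial of degree $A_{\mathscr{F}}-a_{\mathscr{F}}$ with nonzero constant term begs the question: the extreme exponents visible in that formula are the $b_\chi$ and $B_\chi$ of the fake degrees, not $a_\chi$ and $A_\chi$, and a priori both cancellation at the extremes and dependence of the surviving extreme terms on $\rho$ must be excluded. What makes the conclusion true are the further inputs that $a_\chi\le b_\chi$ with equality exactly for the unique special character $\chi_0\in\mathscr{F}$ (so $\min_{\chi\in\mathscr{F}}b_\chi=a_{\mathscr{F}}$ is attained only at $\chi_0$), that $\{x_\rho,x_{\chi_0}\}\neq 0$ for every $\rho$ in the family (the paper's axiom (F4)), and a dual argument at the top degree via $A_\chi=|\mathcal{R}|-a_{\chi\otimes\varepsilon}$ and Alvis--Curtis duality, which exchanges the roles of $a$ and $A$. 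These are precisely the nontrivial facts, due to Lusztig and verified partly case-by-case, that constitute the cited proposition; your alternative route through the $a$-function and two-sided cells likewise amounts to citing that body of work rather than proving it. So either state those inputs explicitly and carry out the bottom/top analysis, or do what the paper does and cite \cite[Proposition 4.2.7]{geck2020character} outright; as written, the middle step of your sketch assumes the conclusion for each $\rho$.
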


\section{Spetses}
\label{sec:spetses}

\subsection{Unipotent characters and Generic Degrees for \texorpdfstring{$G(m,1,n)$}{G(m,1,n)}}

The irreducible representations of $G(m,1,n)$ can be parametrized by $m$-partitions of $n$, that is, tuples $\bm\lambda = (\lambda^{(0)},\lambda^{(1)},\dots,\lambda^{(m-1)})$ of partitions such that $|\lambda^{(0)}| + \cdots + |\lambda^{(m-1)}| = n$. We will generally write the parts of a partition in decreasing order: $\lambda = (\lambda_1 \geq \lambda_2 \geq \cdots)$. 

The group $G(m,1,n)$ is generated by the $n$ elements $\{t,s_1,s_2,\dots,s_{n-1}\}$, where $t$ is given by the matrix $\operatorname{Diag}(\zeta_m,1,\dots,1)$ and $s_i$ is given by the permutation matrix corresponding to the transposition $(i,i+1)$ (see \cite[\S 2.7]{lehrer2009unitary}). The parametrization of irreducible characters of $G(m,1,n)$ is as follows:
\[
    \bm\lambda \leftrightarrow \chi_{\bm\lambda} = \operatorname{Ind}_{G(m,1,|\lambda^{(0)}|) \times \cdots \times G(m,1,|\lambda^{(m-1)}|)}^{G(m,1,n)} \left((\chi_0 \otimes \gamma_0) \boxtimes \cdots \boxtimes (\chi_{m-1} \otimes \gamma_{m-1})\right),
\]

where
\begin{itemize}
    \item $\gamma_k$ is the linear character of $G(m,1,|\lambda^{(k)}|)$ defined by $t \mapsto \zeta_m^k$ and $s_i \mapsto 1$ for $i = 1,\dots,|\lambda^{(k)}| - 1$.
    \item $\chi_k$ is the character of the symmetric group $S_{|\lambda^{(k)}|}$ corresponding to $\lambda^{(k)}$ (see \cite{sagan2013symmetric} for instance) considered as a character of $G(m,1,|\lambda^{(k)}|)$ via the surjection $G(m,1,|\lambda^{(k)}|) \to S_{|\lambda^{(k)}|}$.
\end{itemize}

Every irreducible character of $G(m,1,n)$ is equal to $\chi_{\bm\lambda}$ for some $\bm\lambda$, and $\chi_{\bm\alpha} = \chi_{\bm\beta}$ if and only if $\bm\alpha = \bm\beta$. We will describe explicit models for the irreducible representations in terms of the $m$-partitions of $n$ as given in \cite{ariki1994hecke} (although the precise notation will follow \cite{marin2010automorphisms}).

Fix $\bm\lambda$, an $m$-partition of $n$. Denote by $\mathcal{T}(\bm\lambda)$ the set of standard Young tableaux with shape $\bm\lambda$. That is, the elements of $\mathcal{T}(\bm\lambda)$ are tuples $\mathbf{T} = (T_0,\dots,t_{m-1})$ such that $T_i$ is a filling of the Young diagram of $\lambda^i$ with numbers in $\{1,\dots,n\}$ such that the numbers increase across the rows and columns of each $T_i$, and each number appears exactly once in one of the filled diagrams. We will let $\mathbf{T}(k) = i$ if $k$ is placed in $T_i$. Let $V(\bm\lambda)$ be the $\mathbb{C}$-vector space with basis $\mathcal{T}(\bm\lambda)$, and define the representation $\rho_{\bm\lambda} : G(m,1,n) \to V(\bm\lambda)$ by
\begin{itemize}
    \item $\rho_{\bm\lambda}(t)\mathbf{T} = \zeta_m^{\mathbf{T}(1)}\mathbf{T}$.
    \item Let $\mathbf{T}_{i \leftrightarrow i+1}$ be the tuple obtained by exchanging the numbers $i$ and $i+1$ if this results in a tuple of standard Young tableaux, and 0 otherwise. If $i$ and $i+1$ occur in the same tableau, say $i$ is in row $\ell_1$ column $m_1$ and $i+1$ is in row $\ell_2$ column $m_2$, define the axial distance $a(i,i+1)$ between them to be $a(i,i+1) = (m_2 - \ell_2) - (m_1 - \ell_1)$. If $i$ and $i+1$ do not occur in the same tableau, then set $a(i,i+1) = \infty$ so that $1/a(i,i+1) = 0$. Then
    \[
        \rho_{\bm\lambda}(s_i)\mathbf{T} = \frac{1}{a(i,i+1)}\mathbf{T} + \left( 1 + \frac{1}{a(i,i+1)} \right)\mathbf{T}_{i\leftrightarrow i+1}.
    \]
\end{itemize}
It is easy to see from this construction that the reflection representation is given by the $m$-partition $(n-1,1,\varnothing, \dots, \varnothing)$.

\begin{example}[Exterior powers]
    Let $V$ be the reflection representation of $G(m,1,n)$. We will show that the $m$-partition of $n$ corresponding to $\Lambda^k V$ is $(n-k,1^k,\varnothing,\dots,\varnothing)$.

    Let $\bm{\lambda}(k)$ be the partition $(n-k,1^k,\varnothing,\dots,\varnothing)$. Consider the model $V(\bm{\lambda}(1))$ given above for the reflection representation with basis $\mathcal{T}(\bm{\lambda}(1))$ consisting of standard Young tableaux $\mathbf{T} = (T_0,T_1,\dots,T_m)$ of shape $\bm{\lambda}(1)$. Let $\mathbf{T}^{a_1,a_2,\dots,a_k}$ be the standard Young tableaux of shape $\bm{\lambda}(k)$, with
    \[
        T^{a_1,a_2,\dots,a_k}_1 = \begin{ytableau}
            a_1 \\
            a_2 \\
            \vdots \\
            a_k
        \end{ytableau}
    \]
    for $1 \leq a_1 < a_2 < \cdots < a_k \leq n$. So $\mathcal{T}(\bm{\lambda}(k)) = \{\mathbf{T}^{a_1,\dots,a_k} : 1 \leq a_1 < \cdots < a_k \leq n\}$. Define a linear map $\varphi : \Lambda^k V(\bm{\lambda}(1)) \to V(\bm{\lambda}(k))$ by
    \[
        \mathbf{T}^{a_1} \wedge \mathbf{T}^{a_2} \wedge \cdots \wedge \mathbf{T}^{a_k} \mapsto \mathbf{T}^{a_1,\dots,a_k}, 
    \]
    for $1 \leq a_1 < \cdots < a_k \leq n$. We need to show that $\varphi$ gives an isomorphism of representations. Now
    \begin{align*}
        \varphi(t \cdot (\mathbf{T}^{a_1} \wedge \cdots \wedge \mathbf{T}^{a_k})) &= \varphi\left(\zeta_m^{\mathbf{T}^{a_1}(1) + \cdots + \mathbf{T}^{a_k}(1)}(\mathbf{T}^{a_1} \wedge \cdots \wedge \mathbf{T}^{a_k})\right) \\
        &= \zeta_m^{\mathbf{T}^{a_1,\dots,a_k}(1)}\mathbf{T}^{a_1,\dots,a_k} \\
        &= t \cdot \mathbf{T}^{a_1,\dots,a_k}.
    \end{align*}
    For each $i = 1,\dots,n-1$, there are three cases to consider:
    \begin{enumerate}
        \item $\mathbf{T}^{a_1,\dots,a_k}(i) = \mathbf{T}^{a_1,\dots,a_k}(i+1) = 0$. In this case, 
        \begin{align*}
            \varphi(s_i \cdot (\mathbf{T}^{a_1} \wedge \cdots \wedge \mathbf{T}^{a_k})) &= \varphi(\mathbf{T}^{a_1} \wedge \cdots \wedge \mathbf{T}^{a_k}) \\
            &= \mathbf{T}^{a_1,\dots,a_k} = s_i \cdot \mathbf{T}^{a_1,\dots,a_k}.
        \end{align*}

        \item $\mathbf{T}^{a_1,\dots,a_k}(i) = \mathbf{T}^{a_1,\dots,a_k}(i+1) = 1$. In this case,
        \begin{align*}
            \varphi(s_i \cdot (\mathbf{T}^{a_1} \wedge \cdots \mathbf{T}^{i} \wedge \mathbf{T}^{i+1} \wedge \cdots \wedge \mathbf{T}^{a_k})) &= \varphi(\mathbf{T}^{a_1} \wedge \cdots \mathbf{T}^{i+1} \wedge \mathbf{T}^{i} \wedge \cdots \wedge \mathbf{T}^{a_k}) \\
            &= - \mathbf{T}^{a_1,\dots,a_k} = s_i \cdot \mathbf{T}^{a_1,\dots,a_k}.
        \end{align*}

        \item $\mathbf{T}^{a_1,\dots,a_k}(i) \neq \mathbf{T}^{a_1,\dots,a_k}(i+1)$. That is, exactly one of $i$, $i+1$ is in $\{a_1,\dots,a_k\}$, say $a_j = i$ or $i+1$. In this case,
        \begin{align*}
            \varphi(s_i \cdot (\mathbf{T}^{a_1} \wedge \cdots \wedge \mathbf{T}^{a_j} \wedge \cdots \wedge \mathbf{T}^{a_k})) &= \varphi (\mathbf{T}^{a_1} \wedge \cdots \wedge \mathbf{T}^{a_j}_{i \leftrightarrow i+1} \wedge \cdots \wedge \mathbf{T}^{a_k}) \\
            &= \mathbf{T}^{a_1,\dots,a_k}_{i \leftrightarrow i+1} = s_i \cdot \mathbf{T}^{a_1,\dots,a_k}.
        \end{align*}
        
        Thus, the $m$-partition of of $n$ corresponding to $\Lambda^k V$ is $(n-k,1^k,\varnothing,\dots,\varnothing)$.
    \end{enumerate}
\end{example}

\begin{example}[Galois twists]
\label{ex:galoisPart}
    One can easily show, using Schur orthogonality relations, that the elements of $\operatorname{Gal}(\mathbb{Q}(\zeta_h) / \mathbb{Q})$ act as permutations on $\operatorname{Irr}(W)$ for $W$ a well-generated irreducible complex reflection group with Coxeter number $h$. For the groups $G(m,1,n)$, we will characterize this action in terms of the $m$-partitions of $n$.

    The following is \cite[Lemma 5.2]{marin2010automorphisms} (which contains a small typo). For $\bm{\lambda} = (\lambda^{(0)},\lambda^{(1)},\dots,\lambda^{(m-1)})$, let $\sigma_p(\bm{\lambda}) = (\lambda^{(0)} , \lambda^{(p^{-1})}, \dots, \lambda^{((m-1)p^{-1})})$, where the indices are taken mod $m$ and $p^{-1}$ is the multiplicative inverse of $p$ mod $mn$. That is, $[\sigma_p(\bm{\lambda})]^{(pk)} = \lambda^{(k)}$ (indices taken mod $m$). Then $V(\bm{\lambda})^{\sigma_p} \cong V(\sigma_p(\bm{\lambda}))$.

    To see this, define $\varphi : V(\bm{\lambda})^{\sigma_p} \to V(\sigma_p(\bm{\lambda}))$ by
    \[
        \mathbf{T} \mapsto \mathbf{T}^{\sigma_p}, \quad \text{where} \quad T^{\sigma_p}_{pk} = T_k.
    \]
    This is an isomorphism of representations because
    \[
        \varphi(t \cdot \mathbf{T}) = \varphi(\zeta_m^{p\mathbf{T}(1)} \mathbf{T}) = \zeta_m^{p\mathbf{T}(1)} \mathbf{T}^{\sigma_p} = t \cdot \mathbf{T}^{\sigma_p}
    \]
    and
    \[
        \varphi(s_i \cdot \mathbf{T}) = \frac{1}{a(i,i+1)}\mathbf{T}^{\sigma_p} + \left(1 + \frac{1}{a(i,i+1)}\right)\mathbf{T}^{\sigma_p}_{i \leftrightarrow i+1} = s_i \cdot \mathbf{T}^{\sigma_p}.
    \]

    One can see that $\Lambda^k V^{\sigma_p} \cong (\Lambda^k V)^{\sigma_p}$ as representations of $W$ by noticing that they induce the same character. So the $m$-partition of $n$ corresponding to $\Lambda^k V^{\sigma_p}$ is $(n-k,\varnothing,\dots,\varnothing,1^k,\varnothing,\dots,\varnothing)$, where the $1^k$ is in the $p$th slot (mod $m$; indexing starts with 0).
\end{example}

In \cite{malle1995unipotente}, Malle gives a combinatorial construction of unipotent characters and generic degrees for the groups $G(m,1,n)$ and $G(m,m,n)$ which enjoy many of the same properties as the corresponding objects for Weyl groups. We will now describe the construction for the groups $G(m,1,n)$. The construction for the groups $G(m,m,n)$ will be given later.

\begin{definition}
    An \defn{$m$-symbol} is a tuple $S = (S_0,\dots,S_{m-1})$ of $m$ finite sequences $S_i = (0 \leq S_{i,1} < S_{i,2} < \cdots < S_{i,M_i})$ of strictly increasing non-negative integers. An $m$-symbol will be presented as
    \[
        S = \begin{pmatrix}
            S_{0,1} & \cdots & S_{0,M_0} \\
            \vdots & & \vdots \\
            S_{m-1,1} & \cdots & S_{m-1,M_{m-1}}
        \end{pmatrix}.
    \]
\end{definition}

The \defn{content} of an $m$-symbol $S$ is $I(S) = M_0 + \cdots + M_{m-1}$. Its \defn{rank} is
\[
    \operatorname{rk}(S) = \sum_{i,j} \lambda_{i,j} - \left\lfloor \frac{(I(S) - 1)(I(S) - m + 1)}{2m} \right\rfloor.
\]
For an $m$-symbol $S$ with $I(S) \equiv 1 \mod m$, define its \defn{defect} by
\[
    \operatorname{def}(S) = \left(\frac{(m-1)(I(S) - 1)}{2} - \sum_{i=0}^{m-1} i M_i\right) \mod m.
\]
Such a symbol will be called \defn{reduced} if $\operatorname{def}(S) = 0$ and not all of the $\lambda_{i,1}$ are zero.

\begin{definition}
\label{def:uniDeg1}
    The \defn{unipotent characters} of $G(m,1,n)$ are
    \[
        \operatorname{Uch}(G(m,1,n)) := \{ \text{reduced $m$-symbols } S : \operatorname{rk}(S) = m, \,\, I(S) \equiv 1 \mod m\}.
    \]
    We'll say that two unipotent characters $S$, $S'$ belong to the same family if the entries of $S$ and $S'$ coincide as multisets.
    
    The \defn{generic degree} of a unipotent character is  defined by
    \[
        \operatorname{Deg}_S(q) := \frac{(-1)^{\binom{m}{2}\binom{\ell}{2}} \left(\prod_{i=1}^n (q^{mi}-1)\right) \cdot \prod_{i=0}^{m-1} \prod_{j=i}^{m-1} \prod_{\substack{(\lambda, \mu) \in S_i \times S_j \\ \mu < \lambda \text{ if } i = j}} (q^{\lambda} \zeta_m^i - q^{\mu}\zeta_m^j) }{\tau(m)^\ell q^{\binom{m(\ell-1) + 1}{2} + \binom{m(\ell-2) + 1}{2} + \cdots} \prod_{i=0}^{m-1} \Theta (S_i, q^m)}
    \]
    where 
    \[
        \ell = \frac{I(S) - 1}{m}, \qquad \tau(m) = \prod_{i=0}^{m-1} \prod_{j=i+1}^{m-1}(\zeta^i - \zeta^j) = \sqrt{-1}^{\binom{m-1}{2}}\sqrt{m}^m,
    \]
    and for any finite $A \subset \mathbb{Z}$,
    \[
        \Theta(A,q) = \prod_{\substack{\lambda \in A \\ \lambda \geq 1}} \prod_{j=1}^\lambda (q^j - 1).
    \]
\end{definition}

There is an inclusion $\operatorname{Irr}(G(m,1,n)) \subset \operatorname{Uch}(G(m,1,n))$. Let $\bm\lambda = (\lambda^{(0)}, \dots, \lambda^{(m-1)})$ be an $m$-partition of $n$. Let $\alpha_0 = (0 \leq \alpha_{0,1} \leq \cdots \leq \alpha_{0,M+1})$ be the parts of $\lambda^{(0)}$ written in increasing order with 0's added if necessary. Let $\alpha_i = (0 \leq \alpha_{i,1} \leq \cdots \leq \alpha_{i,M})$ be the parts of $\lambda^{(i)}$ written in increasing order for $i > 0$ with 0's added if necessary. Choose $M$ as small as possible. Define an $m$-symbol $S_{\bm\lambda} = (S_0, \dots, S_{m-1})$ by $S_{i,j} = \alpha_{i,j} + j - 1$. Then $S_{\bm\lambda}$ is a reduced symbol with $I(S_{\bm\lambda}) = mM + 1$. The inclusion is defined by
\[
    \operatorname{Irr}(G(m,1,n)) \to \operatorname{Uch}(G(m,1,n)) \qquad \chi_{\bm\lambda} \mapsto S_{\bm\lambda}.
\]
Under this inclusion, the definition of the generic degree of an irreducible character given in \Cref{def:genDeg} coincides with the definition for unipotent characters. That is, $\operatorname{Deg}_{\chi_{\bm\lambda}}(q) = \operatorname{Deg}_{S_{\bm\lambda}}(q)$.

Malle also gives a formula for the fake degree of an irreducible character in terms of the corresponding $m$-symbol. This formula has to be slightly adjusted (following \cite[Proposition 5.3.11]{lasy2012traces}) because Malle uses a different definition of fake degrees (see \Cref{rem:fegDef}).
\begin{proposition}[\cite{malle1995unipotente}, Proposition 2.10]
\label{prop:fegForm1}
    The fake degree of $\chi_{\bm\lambda}$, with corresponding $m$-symbol $S_{\bm\lambda} = (S_0,\dots,S_{m-1})$, is equal to
    \[
        \left(\prod_{i=1}^n (q^{mi}-1)\right)\cdot \left(\prod_{i=0}^{m-1}\frac{\Delta(S_i,q^m)}{\Theta(S_i,q^m)} \right) \cdot \frac{\prod_{i=1}^{m-1}\prod_{\lambda \in S_i}q^{(m-i)\lambda}} {q^{\binom{m(\ell-1)+1}{2} + \binom{m(\ell - 2)+1}{2} + \cdots}},
    \]
    where for any finite $A \subset \mathbb{Z}$,
    \[
        \Delta(A,q) = \prod_{\substack{\lambda,\lambda' \in A \\ \lambda' < \lambda}} (q^\lambda - q^{\lambda'}),
    \]
    and $\ell$ and $\Theta$ are as in \Cref{def:uniDeg1}.
\end{proposition}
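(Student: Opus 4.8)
The plan is to obtain the stated formula from Malle's original formula in \cite{malle1995unipotente} by combining it with the relation $\operatorname{Feg}_{\chi}(q) = \overline{\operatorname{Feg}}_{\chi^{*}}(q)$ of \Cref{rem:fegDef}. Malle's Proposition 2.10 computes the GAP3-style fake degree $\overline{\operatorname{Feg}}_{\chi_{\bm\lambda}}(q)$, and written in the notation above it agrees with the expression in the statement except that the penultimate factor reads $\prod_{i=1}^{m-1}\prod_{\lambda\in S_i}q^{i\lambda}$ rather than $\prod_{i=1}^{m-1}\prod_{\lambda\in S_i}q^{(m-i)\lambda}$ (the remaining factors, including the displayed denominator, are unchanged). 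So it suffices to describe the $m$-symbol of $\chi_{\bm\lambda}^{*}$ in terms of $S_{\bm\lambda}$, substitute it into Malle's formula, and simplify.

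First I would identify $\chi_{\bm\lambda}^{*}$. From the induction formula for $\chi_{\bm\lambda}$, using that every symmetric group character is self-dual and that $\gamma_k(t)=\zeta_m^k$ forces $\gamma_k^{*}=\gamma_{-k}$, one gets $\chi_{\bm\lambda}^{*}=\chi_{\bm\mu}$ with $\mu^{(j)}=\lambda^{(-j\bmod m)}$; in particular $\mu^{(0)}=\lambda^{(0)}$ and $\{\mu^{(0)},\dots,\mu^{(m-1)}\}=\{\lambda^{(0)},\dots,\lambda^{(m-1)}\}$ as multisets. (This is also visible from \Cref{ex:galoisPart}: since $G(m,1,n)$ acts unitarily, $\chi_{\bm\lambda}^{*}$ is the complex conjugate of $\chi_{\bm\lambda}$, hence the $\sigma_{-1}$-twist, and $-1$ is coprime to $h=mn$.) Because passing from $\bm\lambda$ to $\bm\mu$ only permutes the components $\lambda^{(1)},\dots,\lambda^{(m-1)}$ — whose associated sequences $\alpha_i$ all have the common length $M$ — the integer $M$ chosen as small as possible is unchanged, and $S_{\bm\mu}=(S_0,S_{m-1},S_{m-2},\dots,S_1)$, the reversal of rows $1$ through $m-1$ of $S_{\bm\lambda}$. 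This symbol is again reduced: $I(S)$, $\operatorname{rk}(S)$, and $\operatorname{def}(S)$ depend only on $\sum_{i,j}\lambda_{i,j}$ and on the multiset $\{M_i\}$ (here $M_0=M+1$ and $M_i=M$ for $i\ge 1$), all of which are invariant under permuting rows.

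It then remains to substitute $S_{\bm\mu}$ into Malle's formula for $\overline{\operatorname{Feg}}$ and read off $\operatorname{Feg}_{\chi_{\bm\lambda}}(q)=\overline{\operatorname{Feg}}_{\chi_{\bm\mu}}(q)$. Every factor except the penultimate one is invariant under permuting the rows $S_1,\dots,S_{m-1}$: the factor $\prod_{i=1}^{n}(q^{mi}-1)$ does not involve the symbol, the factor $\prod_{i=0}^{m-1}\Delta(S_i,q^m)/\Theta(S_i,q^m)$ is a product over all rows, and the denominator depends only on $\ell=(I(S)-1)/m$, which is unchanged. For the penultimate factor, reindexing $i\mapsto m-i$ gives
\[
    \prod_{i=1}^{m-1}\prod_{\lambda\in (S_{\bm\mu})_i}q^{i\lambda}=\prod_{i=1}^{m-1}\prod_{\lambda\in S_{m-i}}q^{i\lambda}=\prod_{j=1}^{m-1}\prod_{\lambda\in S_j}q^{(m-j)\lambda},
\]
which is precisely the factor appearing in the statement. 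I expect the main obstacle to be bookkeeping rather than anything conceptual: one has to pin down Malle's original formula with its own normalizations of symbols and of $\tau(m)$, verify that complex conjugation acts on $m$-partitions by $\lambda^{(j)}\mapsto\lambda^{(-j)}$, and confirm that row reversal preserves reducedness and the minimal choice of $M$ so that $S_{\bm\mu}$ really is the symbol attached to $\chi_{\bm\lambda}^{*}$ — this is the adjustment recorded in \cite[Proposition 5.3.11]{lasy2012traces}.
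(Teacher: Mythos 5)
Your proposal is correct and follows the same route the paper takes: the paper gives no independent proof, citing Malle's Proposition 2.10 together with the convention adjustment recorded in \cite[Proposition 5.3.11]{lasy2012traces}, and your argument simply reconstructs that adjustment by identifying $\chi_{\bm\lambda}^{*}=\chi_{\bm\mu}$ with $\mu^{(j)}=\lambda^{(-j \bmod m)}$, observing that all factors of Malle's formula except the penultimate one are invariant under permuting the rows $S_1,\dots,S_{m-1}$, and reindexing $i\mapsto m-i$ to turn $q^{i\lambda}$ into $q^{(m-i)\lambda}$. The dualization, the row-reversal of the symbol, and the invariance checks (including that $\operatorname{def}$ is preserved because $M_1=\cdots=M_{m-1}=M$) are all sound, so the only remaining task is the bookkeeping against Malle's original normalization that you already flag.
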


\begin{example}
\label{ex:galFeg1}
    The $m$-symbol corresponding to $V^{\sigma_p}$ is given by
    \[
        S_i = \begin{cases}
            (0,n) & \text{if } i = 0, \\
            (1) & \text{if } i = p \mod m, \\
            (0) & \text{otherwise}.
        \end{cases}
    \]
    So
    \[
        \Delta(S_i, q^m) = \begin{cases}
            q^{mn} - 1 & \text{if } i = 0, \\
            1 & \text{otherwise},
        \end{cases} \qquad \text{and} \qquad \Theta(S_i,q^m) = \begin{cases}
            \prod_{j=1}^n(q^{mj} - 1) & \text{if } i = 0, \\
            (q^m-1) & \text{if } i = p \mod m, \\
            1 & \text{otherwise}.
        \end{cases}
    \]
    Letting $\chi_{1,p}$ denote the character of $V^{\sigma_p}$, we then have
    \begin{align*}
        \operatorname{Feg}_{\chi_{1,p}}(q) &= \left(\prod_{i=1}^n (q^{mi}-1)\right) \cdot \frac{q^{m-(p \mod m)}}{(q^m-1)\prod_{j=1}^{n-1}(q^{mj} - 1)} \\
        &= q^{m-(p \mod m)} + q^{2m-(p \mod m)} +\cdots + q^{mn-(p \mod m)}.
    \end{align*}
    Since $p$ is relatively prime to $mn$, we can rewrite this as
    \[
        \operatorname{Feg}_{\chi_{1,p}}(q) = \sum_{j=1}^n q^{(mj - p) \mod mn} = \sum_{j=1}^n q^{p(mj-1) \mod mn} = \sum_{j=1}^n q^{pe_j \mod h},
    \]
    which demonstrates \Cref{ex:galoisFeg}.
\end{example}

The Schur elements of irreducible representations of $G(m,1,n)$ can be expressed in terms of the corresponding $m$-partition using the next result.

\begin{theorem}[\cite{mathas2004matrix}, Corollary 5.16]
\label{thm:imprimSchur}
    For partitions $\lambda, \mu$ (whose parts are written in decreasing order), define
    \begin{align*}
        [\lambda] &:= \{(i,j) : i \geq 1, \,\, 1 \leq j \leq \lambda_i\}, \\
        \lambda'_k &:= \#\{i : i \geq 1 \text{ such that } \lambda_i \geq k\}, \\
        n(\lambda) &:= \sum_{i \geq 1} (i-1)\lambda_i, \\
        h_{i,j}^{\lambda, \mu} &:= \lambda_i - i + \mu'_j - j + 1 \quad \text{for } (i,j) \in [\lambda].
    \end{align*}
    Let $\bm\lambda = (\lambda^{(0)},\lambda^{(1)},\dots,\lambda^{(m-1)})$ be the $m$-partition of $n$ corresponding to the irreducible representation $\chi$. Then
    \[
        S_\chi(q) = (-1)^{mn} q^{-\alpha(\bm\lambda)-n} \prod_{s=0}^{m-1} \prod_{(i,j) \in [\lambda^{(s)}]} Q_s \left[h_{i,j}^{\lambda^{(s)}, \lambda^{(s)}} \right]_q \prod_{t=s+1}^{m-1} X_{s,t}^{\bm\lambda},
    \]
    where
    \[
        X_{s,t}^{\bm\lambda} = \left(\prod_{(i,j) \in [\lambda^{(t)}]}(q^{j-i}Q_t - Q_s)\right) \cdot \left(\prod_{(i,j) \in [\lambda^{(s)}]} \left(q^{j-i}Q_s - q^{\lambda_1^{(t)}} Q_t\right) \prod_{k=1}^{\lambda_1^{(t)}} \frac{q^{j-i}Q_s - q^{k-1-\lambda_k^{(t)'}}Q_t}{q^{j-i} Q_s - q^{k-\lambda_k^{(t)'}}Q_t} \right),
    \]
    and
    \[
        Q_s = \begin{cases}
            q & \text{if } s = 0 \\
            \zeta_m^s & \text{otherwise}.
        \end{cases} \qquad \text{ and } \qquad \alpha(\bm\lambda) = \sum_{0 \leq s \leq m-1} n(\lambda^{(s)}).
    \]
\end{theorem}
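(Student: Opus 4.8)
The plan is to realize $\mathcal{H}(W)$ for $W=G(m,1,n)$ as the cyclotomic Ariki--Koike algebra and to compute the Schur element $S_{\chi_{\bm{\lambda}}}=\omega_{\chi_{\bm{\lambda}}}(\chi_{\bm{\lambda}}^\vee)$ directly from a seminormal model, following Mathas. For $n\ge 2$ the group $W$ has exactly two orbits of reflecting hyperplanes---one of stabilizer order $m$, one of order $2$---so $\mathcal{H}(W)$ is the Ariki--Koike algebra in the parameters $u_0,\dots,u_{m-1}$ (order-$m$ class) and $q$ (transpositions), which under $\theta_q$ become the $Q_s$ of the statement. By \Cref{thm:splittingField} it becomes split semisimple over its splitting field, with irreducibles $V(\bm{\lambda})$ indexed by $m$-partitions, and the canonical symmetrizing trace $\tau$ of \Cref{assum:symTrace} exists here (the BMM conjecture is known for all $G(m,p,n)$).

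First I would pass to the $q$-analogue of the tableau model $\rho_{\bm{\lambda}}$ recalled above (the Ariki--Koike, or Hoefsmit-type, seminormal representation): it has a basis $\{v_{\mathbf{T}}\}$ indexed by standard $\bm{\lambda}$-tableaux on which the Jucys--Murphy elements $L_1,\dots,L_n\in\mathcal{H}(W)$ act diagonally, $L_k\,v_{\mathbf{T}}=q^{\,j-i}Q_s\,v_{\mathbf{T}}$ when the box $\mathbf{T}^{-1}(k)$ lies in row $i$, column $j$ of the $s$-th component of $\mathbf{T}$. Over the splitting field the $L_k$ are simultaneously semisimple with these ``contents'' as joint eigenvalues, so for each $\mathbf{T}$ there is a primitive idempotent $e_{\mathbf{T}}$---an explicit product of linear factors $(L_k-q^{c}Q_{r})/(q^{\,j-i}Q_s-q^{c}Q_{r})$ over the other admissible eigenvalues---projecting onto $\langle v_{\mathbf{T}}\rangle$, and the $e_{\mathbf{T}}$ together with suitable off-diagonal elements form a complete set of matrix units.

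The key elementary point is that, since $\tau=\sum_{\chi}S_\chi^{-1}\chi$ and any irreducible character sends a diagonal matrix unit in its own block to $1$, one has $\tau(e_{\mathbf{T}})=1/S_{\chi_{\bm{\lambda}}}$ for \emph{every} standard $\bm{\lambda}$-tableau $\mathbf{T}$; hence it suffices to evaluate $\tau(e_{\mathbf{T}})$ for one convenient $\mathbf{T}$. I would expand $e_{\mathbf{T}}$ in the standard basis $\{L_1^{a_1}\cdots L_n^{a_n}T_w\}$ and apply the defining values of $\tau$; this collapses $\tau(e_{\mathbf{T}})^{-1}$ to a product over $k=1,\dots,n$ of local factors depending only on the contents of the first $k$ boxes of $\mathbf{T}$ (this is Mathas's computation). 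Taking $\mathbf{T}$ \emph{superstandard}---filling each component column-by-column, components in order---makes these factors collect componentwise: the terms in which both boxes lie in the $s$-th component reassemble into $\prod_{(i,j)\in[\lambda^{(s)}]}Q_s\,[h_{i,j}^{\lambda^{(s)},\lambda^{(s)}}]_q$, the hook-length Schur element of $\mathcal{H}(S_{|\lambda^{(s)}|})$ rescaled by $Q_s$, while the terms mixing components $s<t$ reorganize into $X_{s,t}^{\bm{\lambda}}$, and the leftover global monomial is $(-1)^{mn}q^{-\alpha(\bm{\lambda})-n}$ (the exponent recording the shift between $L_k$-eigenvalues and the $Q_s,q$, the sign fixed by comparing $\tau(1)$ with the product normalization). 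One then checks $m=1$ (the classical hook-length Schur element of $\mathcal{H}(S_n)$), $n=1$, and $m=2$ (type $B_n$) to pin down all signs and monomials, and, since the spetsial Hecke algebra is a specialization of the generic one, specializes to $Q_0=q$, $Q_s=\zeta_m^s$.

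The main obstacle is this last combinatorial identity: reorganizing the raw product of differences of $L_k$-eigenvalues into the asymmetric hook-length form, and in particular producing the correction factor
\[
\prod_{k=1}^{\lambda_1^{(t)}}\frac{q^{\,j-i}Q_s-q^{\,k-1-\lambda_k^{(t)'}}Q_t}{q^{\,j-i}Q_s-q^{\,k-\lambda_k^{(t)'}}Q_t},
\]
a delicate telescoping that measures the difference between the multiset of column contents of $\lambda^{(t)}$ and that of a single column of its length; one must also verify that the final expression is independent of the padding parameter $M$ and of the chosen superstandard $\mathbf{T}$, and that it is compatible with the $\beta$-set (symbol) shifts used elsewhere in the paper. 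A cheaper route, valid only after the spetsial specialization, is to instead deduce the formula from Malle's symbol description of $\operatorname{Deg}_{\chi_{\bm{\lambda}}}$ together with $S_{\chi_{\bm{\lambda}}}=S_{\bm{1}}/\operatorname{Deg}_{\chi_{\bm{\lambda}}}$ and $S_{\bm{1}}=P_W$, but this does not recover the generic-parameter statement.
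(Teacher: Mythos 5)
The paper does not prove this statement at all: it is quoted verbatim (up to the spetsial specialization $Q_0=q$, $Q_s=\zeta_m^s$) from Mathas, and the paper's ``proof'' is the citation. Your plan is, in substance, a reconstruction of Mathas's own argument: realize $\mathcal{H}(W)$ for $G(m,1,n)$ as the Ariki--Koike algebra, use the seminormal model in which the Jucys--Murphy elements act by the contents $q^{j-i}Q_s$, build the primitive idempotents $e_{\mathbf{T}}$ as interpolation products in the $L_k$, and use $\tau=\sum_\chi S_\chi^{-1}\chi$ together with $\chi(e_{\mathbf{T}})=\delta_{\chi,\chi_{\bm\lambda}}$ to reduce the Schur element to $\tau(e_{\mathbf{T}})^{-1}$ for a single well-chosen tableau. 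All of these reductions are correct, and you are right that this is where the actual content lies; so the route is sound and matches the cited source rather than anything internal to the paper.

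Two caveats. First, as you acknowledge, the decisive step --- expanding $\tau(e_{\mathbf{T}})$ in a standard basis and reorganizing the resulting product of content differences into the hook factors $Q_s[h^{\lambda^{(s)},\lambda^{(s)}}_{i,j}]_q$, the mixed factors $X^{\bm\lambda}_{s,t}$ (including the telescoping correction over columns of $\lambda^{(t)}$), and the monomial $(-1)^{mn}q^{-\alpha(\bm\lambda)-n}$ --- is only described, not carried out, so as written this is a plan rather than a proof; that is exactly the computation occupying the bulk of Mathas's paper. Second, your appeal to \Cref{assum:symTrace} needs one more sentence: the trace with respect to which Mathas computes is the natural trace on the Ariki--Koike standard basis, and one must identify it with the canonical BMM trace (this follows from the Malle--Mathas symmetrizing-trace result cited in the paper together with the uniqueness statement of Brou\'e--Malle--Michel); otherwise the formula is for a priori a different symmetrizing form. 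Finally, be careful with your ``cheaper route'' via Malle's symbol formula and $S_\chi=P_W/\operatorname{Deg}_\chi$: in the logic of this paper the identification of $\operatorname{Deg}_{\chi_{\bm\lambda}}$ with the symbol degree is itself established by comparison with Schur-element computations, so that shortcut is close to circular, besides losing the generic-parameter statement as you note.
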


We will want to be able to evaluate the Schur elements at the values $\zeta_{mn}^p$ for $p$ relatively prime to $mn$. For this, Mathas's formula will be difficult to work with, so we will prefer the following simplification:

\begin{theorem}[\cite{chlouveraki2012schur}, Theorem 3.2]
\label{thm:imprimSchurSimp}
    Let $\bm\lambda = (\lambda^{(0)},\lambda^{(1)},\dots,\lambda^{(m-1)})$ be the $m$-partition of $n$ corresponding to the irreducible representation $\chi$, and let $\bar{\bm\lambda}$ be the (decreasing) sequence consisting of all of the parts of $\lambda^{(0)},\lambda^{(1)},\dots,\lambda^{(m-1)}$ combined. Then
    \[
        S_\chi(q) = (-1)^{n(m-1)}q^{-n(\bar{\bm\lambda})}(q-1)^{-n} \prod_{s=0}^{m-1} \prod_{(i,j) \in [\lambda^{(s)}]} \prod_{t = 0}^{m-1} \left(q^{h_{i,j}^{\lambda^{(s)}, \lambda^{(t)}}} Q_sQ_t^{-1} - 1 \right),
    \]
    where
    \[
        Q_s = \begin{cases}
            q & \text{if } s = 0 \\
            \zeta_m^s & \text{otherwise}.
        \end{cases}
    \]
\end{theorem}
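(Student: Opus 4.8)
The plan is to derive this formula from Mathas's expression in \Cref{thm:imprimSchur} by specializing the parameters $Q_s$ and reorganizing the asymmetric product of \Cref{thm:imprimSchur} into the symmetric shape claimed here. First split the triple product $\prod_s\prod_{(i,j)\in[\lambda^{(s)}]}\prod_{t=0}^{m-1}$ in the target into its diagonal part ($t=s$) and its off-diagonal part ($t\neq s$). The diagonal part is immediate: since $Q_sQ_s^{-1}=1$,
\[
    \prod_{(i,j)\in[\lambda^{(s)}]}\bigl(q^{h_{i,j}^{\lambda^{(s)},\lambda^{(s)}}}-1\bigr)=(q-1)^{|\lambda^{(s)}|}\prod_{(i,j)\in[\lambda^{(s)}]}\bigl[h_{i,j}^{\lambda^{(s)},\lambda^{(s)}}\bigr]_q ,
\]
so, using $\sum_s|\lambda^{(s)}|=n$, the diagonal terms together with the global factor $(q-1)^{-n}$ reproduce precisely the hook products $\prod_s\prod_{(i,j)\in[\lambda^{(s)}]}\bigl[h_{i,j}^{\lambda^{(s)},\lambda^{(s)}}\bigr]_q$ of \Cref{thm:imprimSchur}, up to a monomial in $q$ and the $Q_s$ that is tracked at the end.

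The crux is to match, for each unordered pair $\{s,t\}$, Mathas's factor $X_{s,t}^{\bm\lambda}$ with $\prod_{(i,j)\in[\lambda^{(s)}]}\bigl(q^{h_{i,j}^{\lambda^{(s)},\lambda^{(t)}}}Q_sQ_t^{-1}-1\bigr)\cdot\prod_{(i,j)\in[\lambda^{(t)}]}\bigl(q^{h_{i,j}^{\lambda^{(t)},\lambda^{(s)}}}Q_tQ_s^{-1}-1\bigr)$ up to a monomial, and I would do this by computing both sides in $\beta$-number coordinates. Fix for each $s$ a $\beta$-set of $\lambda^{(s)}$ of some common size $L$. On one side, the mixed-hook product $\prod_{(i,j)\in[\lambda^{(s)}]}\bigl(q^{h_{i,j}^{\lambda^{(s)},\lambda^{(t)}}}x-1\bigr)$ has a standard closed form as a ratio of products of $\bigl(q^{\beta-\gamma}x-1\bigr)$ over $\beta$ in the $\beta$-set of $\lambda^{(s)}$ and $\gamma$ in that of $\lambda^{(t)}$, divided by the ``rectangle'' factors indexed by pairs within the $\beta$-set of $\lambda^{(s)}$; this is the usual dictionary between generalized hook lengths $\lambda_i-i+\mu'_j-j+1$ and differences of $\beta$-numbers. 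On the other side, the telescoping product
\[
    \bigl(q^{j-i}Q_s-q^{\lambda_1^{(t)}}Q_t\bigr)\prod_{k=1}^{\lambda_1^{(t)}}\frac{q^{j-i}Q_s-q^{k-1-\lambda_k^{(t)'}}Q_t}{q^{j-i}Q_s-q^{k-\lambda_k^{(t)'}}Q_t}
\]
inside $X_{s,t}^{\bm\lambda}$ collapses --- the numerator and denominator exponents $k-1-\lambda_k^{(t)'}$ and $k-\lambda_k^{(t)'}$ are each strictly increasing in $k$, and the $\ell$-th numerator exponent equals the $(\ell-1)$-st denominator exponent unless $\ell-1$ is a part size of $\lambda^{(t)}$ --- so that the surviving factors are indexed by a $\beta$-set of $\lambda^{(t)}$, and together with the factors $\prod_{(i,j)\in[\lambda^{(t)}]}(q^{j-i}Q_t-Q_s)$ and $\prod_{(i,j)\in[\lambda^{(s)}]}(q^{j-i}Q_s-q^{\lambda_1^{(t)}}Q_t)$ in $X_{s,t}^{\bm\lambda}$ this yields exactly the same $\beta$-number expression. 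I expect this telescoping-and-comparison step to be the main obstacle: identifying precisely which factors survive and seeing that $\lambda^{(s)}$ and $\lambda^{(t)}$ enter symmetrically --- a symmetry entirely hidden in \Cref{thm:imprimSchur} --- requires careful indexing.

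What remains is bookkeeping. Each of the manipulations above emits an explicit power of $q$ and of the $Q_s$: from $\bigl[h_{i,j}\bigr]_q=(q^{h_{i,j}}-1)/(q-1)$ after a shift, from normalizing each $q^aQ_s-q^bQ_t$ as $q^bQ_t\bigl(q^{a-b}Q_sQ_t^{-1}-1\bigr)$, and from the $\beta$-number rewriting. Summed over all cells of all $\lambda^{(s)}$ and over all pairs $\{s,t\}$, I claim these combine to $q^{-\alpha(\bm\lambda)-n}$ on Mathas's side and to $q^{-n(\bar{\bm\lambda})}$ on the target side, with the stray scalars $\prod_s Q_s^{|\lambda^{(s)}|}$ and the off-diagonal $Q_sQ_t^{-1}$ cancelling; this reduces to a standard identity for the statistic $n$ evaluated on the merged multiset of parts $\bar{\bm\lambda}$. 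A parallel (and easier) count matches the signs: the discrepancy $(-1)^n$ between Mathas's $(-1)^{mn}$ and the claimed $(-1)^{n(m-1)}$ comes from one sign flip per cell incurred in passing to the normalized form. As an alternative to all of this, one could bypass \Cref{thm:imprimSchur} and instead derive the symmetric formula directly from a $\beta$-number expression for the Schur elements of the Ariki--Koike algebra, in which case the mixed-hook dictionary of the second paragraph is again the only nontrivial ingredient.
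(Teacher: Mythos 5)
You should first note that the paper does not actually prove this statement: it is imported verbatim, with citation, from \cite{chlouveraki2012schur}, so there is no internal proof to compare against. Judged on its own, your plan is essentially the derivation that exists in the literature: the cancellation-free formula is obtained there by rewriting an earlier Schur-element formula of Geck--Iancu--Malle/Mathas type in $\beta$-number coordinates, with the dictionary between generalized hook lengths $h_{i,j}^{\lambda,\mu}=\lambda_i-i+\mu'_j-j+1$ and differences of $\beta$-numbers as the key lemma --- exactly the ingredient you identify. Your structural choices are sound: splitting the target triple product into the diagonal $t=s$ part (which, after absorbing $(q-1)^{-n}$, reproduces Mathas's hook factors $[h_{i,j}]_q$ up to the stray $\prod_s Q_s^{|\lambda^{(s)}|}$) and unordered pairs $\{s,t\}$, and your telescoping criterion for the inner product in $X_{s,t}^{\bm\lambda}$ is correct (the factor at index $\ell$ cancels against the one at $\ell-1$ precisely when no part of $\lambda^{(t)}$ equals $\ell-1$).

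The caveat is that, as written, the two decisive computations are asserted rather than carried out, and they are exactly where an error would hide: (i) the identification of the surviving factors of $X_{s,t}^{\bm\lambda}$ with the two ordered mixed-hook products $\prod_{(i,j)\in[\lambda^{(s)}]}\bigl(q^{h_{i,j}^{\lambda^{(s)},\lambda^{(t)}}}Q_sQ_t^{-1}-1\bigr)\cdot\prod_{(i,j)\in[\lambda^{(t)}]}\bigl(q^{h_{i,j}^{\lambda^{(t)},\lambda^{(s)}}}Q_tQ_s^{-1}-1\bigr)$ up to an explicit monomial, which needs a precise closed form for $\prod_{(i,j)\in[\lambda]}\bigl(q^{h_{i,j}^{\lambda,\mu}}x-1\bigr)$ in terms of $\beta$-sets of common size; and (ii) the global bookkeeping showing that the emitted monomials convert $q^{-\alpha(\bm\lambda)-n}$ into $q^{-n(\bar{\bm\lambda})}$ (the cross-terms of $n(\bar{\bm\lambda})-\sum_s n(\lambda^{(s)})$ must come out of the off-diagonal normalizations), cancel $\prod_s Q_s^{|\lambda^{(s)}|}$ together with all root-of-unity prefactors, and produce the residual $(-1)^n$. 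Both formulas should be compared with generic parameters $Q_s$ and only specialized to $Q_0=q$, $Q_s=\zeta_m^s$ at the end, so that no intermediate factor is accidentally zero. Your closing alternative --- bypassing Mathas's telescoping form and starting directly from a $\beta$-number expression for the Ariki--Koike Schur elements --- is in fact closer to the proof in the cited source and would spare you the delicate survival analysis in (i); if you intend to write this up rather than cite it, that is the route I would recommend.
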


We are now ready to prove \Cref{thm:galDeg} for the spetsial groups $G(m,1,n)$.

\begin{proof}[Proof of \Cref{thm:galDeg}]
    We will first compute the generic degrees for the exterior powers of Galois twists of the reflection representation. We saw in \Cref{ex:galoisPart} that the $m$-partition of $n$ corresponding to $\Lambda^k V^{\sigma_p}$ is $\bm\lambda = (n-k,\varnothing,\dots,\varnothing,1^k,\varnothing,\dots,\varnothing)$. Now 
    \[
        h_{1,j}^{\lambda^{(0)}, \lambda^{(t)}} = \begin{cases}
            n-k-j+1 & \text{if } t = 0 \\
            n-1 & \text{if } t=p \mod m \text{ and } j = 1 \\
            n-k -j & \text{otherwise},
        \end{cases} \quad \text{ and } \quad h_{i,1}^{\lambda^{(p)}, \lambda^{(t)}} = \begin{cases}
            2 - i & \text{if } t=0 \\
            k-i+1 & \text{if } t = p \mod m \\
            1 - i & \text{otherwise}.
        \end{cases}
    \]
    So, letting $\chi$ be the character corresponding to $\Lambda^k V^{\sigma_p}$,
    \begin{align*}
        S_{\chi}(q) &= (-1)^{n(m-1)}q^{-k(k+1)/2}(q-1)^{-n} \left(\prod_{j=1}^{n-k}\left(q^{n-k-j+1} - 1 \right)\right)  \left(\frac{q^{n}\zeta_m^{-p} - 1}{q^{n-k}\zeta_m^{-p}-1} \right) \\
        & \qquad \left(\prod_{t=1}^{m-1} \prod_{j=1}^{n-k} \left(q^{n-k-j+1}\zeta_m^{-t} -1\right)\right) \left(\prod_{i=1}^k \left(q^{k-i+1} - 1\right)\right) \left(\prod_{\substack{t=0 \\ t \neq p \mod m}}^{m-1}\prod_{i=1}^k \left(q^{1-i}\zeta_m^{p-t} - 1\right) \right) \\
        &= \frac{(-1)^{n(m-1)}(q^n - \zeta_m^p)}{q^{k(k+1)/2}(q-1)^{n}(q^{n-k} - \zeta_m^p)}\left(\prod_{j=1}^{n-k}\left(q^{j} - 1 \right)\right) \left(\prod_{j=1}^{n-k} (-1)^{m-1} \frac{q^{mj} - 1}{q^j - 1} \right) \\
        & \qquad \left(\prod_{i=1}^k \left(q^{i} - 1\right)\right) \left((-1)^{m-1}\prod_{\substack{t=0 \\ t \neq p \mod m}}^{m-1} \left(1-\zeta_m^{p-t}\right) \right) \left(\prod_{i=1}^{k-1} (-1)^{m-1} q^{-i(m-1)} \frac{q^{mi}-1}{q^i -1} \right) \\ 
        &= \frac{m(q^k-1)(q^n - \zeta_m^p)\prod_{j=1}^{n-k}(q^{mj}-1)\prod_{i=1}^{k-1}(q^{mi} - 1)}{q^{k + m\binom{k}{2}}(q-1)^n(q^{n-k}-\zeta_m^p)}.
    \end{align*}
    The Poincare polynomial for $G(m,1,n)$ is $\prod_{i=1}^n \frac{q^{mi}-1}{q-1}$, so
    \begin{align}
        \operatorname{Deg}_{\chi}(q) &= \frac{q^{k + m\binom{k}{2}}(q^{n-k}-\zeta_m^p)\prod_{i=k}^n(q^{mi}-1)}{m(q^k-1)(q^n - \zeta_m^p)\prod_{j=1}^{n-k}(q^{mj}-1)} \label{eqn:deg} \\
        &= \frac{q^{k + m\binom{k}{2}}(q^{n-k}-\zeta_m^p)\prod_{i=k}^{n-1}(q^{mi}-1) }{m(q^k-1)\prod_{j=1}^{n-k}(q^{mj}-1)}\prod_{\substack{j=0 \\ j \neq p \mod m}}^{m-1}(q^n - \zeta_m^j). \nonumber
    \end{align}
    Hence
    \begin{align*}
        \operatorname{Deg}_{\chi}(\zeta_{mn}^p) &= \frac{\zeta_{mn}^{pk}\zeta_n^{p\binom{k}{2}}(\zeta_{mn}^{-pk}\zeta_m^p-\zeta_m^p)\prod_{i=k}^{n-1}(\zeta_{n}^{pi}-1) }{m(\zeta_{mn}^{pk}-1)\prod_{j=1}^{n-k}(\zeta_{n}^{pj}-1)}\prod_{\substack{j=0 \\ j \neq p \mod m}}^{m-1}(\zeta_{m}^{p} - \zeta_m^j) \\
        &= \frac{\zeta_n^{p\binom{k}{2}}\zeta_m^p(1-\zeta_{mn}^{pk})\prod_{i=k}^{n-1}(\zeta_{n}^{pi}-1) }{m(\zeta_{mn}^{pk}-1)\prod_{j=1}^{n-k}(-\zeta_n^{pj})(\zeta_{n}^{p(n-j)}-1)} (\zeta_m^{-p})\prod_{j=1}^{m-1}(1 - \zeta_m^j) \\
        &= \frac{-\zeta_n^{p\binom{k}{2}}}{(-1)^{n-k}\zeta_n^{p\binom{n-k+1}{2}}}\frac{\prod_{i=k}^{n-1}(\zeta_{n}^{pi}-1)}{\prod_{j=1}^{n-k}(\zeta_{n}^{p(n-j)}-1)}\frac{\prod_{j=1}^{m-1}(1 - \zeta_m^j)}{m} \\
        &= (-1)^{n-k+1}\zeta_n^{p\left(kn - \frac{n(n+1)}{2} \right)} = (-1)^k.
    \end{align*}
    
    Now let $\bm\lambda$ be any $m$-partition of $n$. Notice that the Poincare polynomial evaluated at $\zeta_{mn}^p$ is always 0. So if the generic degree of the representation corresponding to $\lambda$ evaluated at $\zeta_{mn}^p$ is nonzero, then the Schur element evaluated at $\zeta_{mn}^p$ must be 0. Looking at the formula in \Cref{thm:imprimSchurSimp}, notice that the Schur element vanishes at $\zeta_{mn}^p$ if and only if one of the factors $\left(q^{h_{i,j}^{\lambda^{(s)}, \lambda^{(t)}}} Q_sQ_t^{-1} - 1 \right)$ vanishes at $\zeta_{mn}^p$. This happens if and only if at least one of the following is a multiple of $mn$:
    \begin{enumerate}
        \item $ph_{i,j}^{\lambda^{(0)},\lambda^{(0)}}$ 
        \item $p(h_{i,j}^{\lambda^{(0)}, \lambda^{(t)}} + 1) - nt$
        \item $p(h_{i,j}^{\lambda^{(s)}, \lambda^{(0)}} - 1) + ns$ 
        \item $ph_{i,j}^{\lambda^{(s)}, \lambda^{(t)}} + n(s-t)$,
    \end{enumerate}
    where $s,t \geq 1$. 
    
    Expression (1) is a multiple of $mn$ if and only if $h_{i,j}^{\lambda^{(0)}, \lambda^{(0)}}$ is a multiple of $mn$, but this is not possible because $1 \leq h_{i,j}^{\lambda^{(0)}, \lambda^{(0)}} \leq n$. 

    If expression (2) is a multiple of $mn$, then $(h_{i,j}^{\lambda^{(0)}, \lambda^{(t)}} + 1)$ is a multiple of $n$. Now $-(n-1) \leq h_{i,j}^{\lambda^{(0)}, \lambda^{(t)}} \leq n-1$, so either $h_{i,j}^{\lambda^{(0)}, \lambda^{(t)}} = -1$ or $n-1$. If it equals $-1$, then $-nt$ is a multiple of $mn$, which is not possible because $1 \leq t \leq m-1$. So $h_{i,j}^{\lambda^{(0)}, \lambda^{(t)}} = n-1$, which can only happen if $i=j=1$ and $\lambda^{(0)}_1 + \lambda_1^{(t)'} = n$. So for some $1 \leq k \leq n-1$ we have that $\lambda^{(0)} = n-k$ and $\lambda^{(t)} = 1^k$. Moreover, we must have $t = p \mod m$ because $p-t$ needs to be a multiple of $m$. It follows that $\bm\lambda$ corresponds to the $k$th exterior power of the $p$th Galois twist. 

    If expression (3) is a multiple of $mn$, then $(h_{i,j}^{\lambda^{(s)}, \lambda^{(0)}} - 1)$ is a multiple of $n$, so $h_{i,j}^{\lambda^{(s)}, \lambda^{(0)}} = 1$ or $-(n-1)$. If it equals $1$, then $s$ must be a multiple of $m$ which is not possible. So $h_{i,j}^{\lambda^{(s)}, \lambda^{(0)}} = -(n-1)$, which can only happen if $i=n$, $j=1$, $\lambda^{(s)}_i = 1$, and $\lambda_j^{(0)'} = 0$. So we have $\lambda^{(s)} = 1^n$. Moreover, $s = p \mod m$, because $-p + s$ needs to be a multiple of $m$. It follows that $\bm\lambda$ corresponds to the representation $\Lambda^n V^{\sigma_p}$.

    If expression (4) is a multiple of $mn$, then $h_{i,j}^{\lambda^{(s)}, \lambda^{(t)}}$ is a multiple of $n$, which can only happen if $h_{i,j}^{\lambda^{(s)}, \lambda^{(t)}} = 0$. So $s-t$ needs to be a multiple of $m$, which implies that $s = t$. But if $s = t$, then $h_{i,j}^{\lambda^{(s)}, \lambda^{(t)}} \geq 1$, so we have a contradiction.

    Thus we can conclude that $\operatorname{Deg}_\chi(\zeta_{mn}^p) = 0$ if $\chi$ is not an exterior power of $V^{\sigma_p}$.
\end{proof}

\subsection{Clifford Theory}

\begin{theorem}[Clifford's Theorem, \cite{clifford1937representations}]
    Let $k$ be a field, $G$ a group, and $N$ a normal subgroup of finite index in $G$. Suppose that $V$ is a simple $kG$-module. Fix a simple summand $W$ of $\operatorname{Res}^G_N(V)$, and define the inertia subgroup, $T(W)$, to be the set of all $g \in G$ such that the $kN$-module $W^g = \{g \cdot w : w \in W\}$ is isomorphic to $W$. If $X$ is a transversal to $T(W)$ in $G$, then there is a positive integer $e$ (called the ramification index of $V$ in $N$) such that
    \[
        \operatorname{Res}^G_N(V) \cong \bigoplus_{x \in X} (W^x)^{\oplus e}.
    \]
\end{theorem}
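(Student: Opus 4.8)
The plan is to run the classical argument: restrict $V$ to $N$, show the restriction is semisimple and decompose it into isotypic (homogeneous) components, and then use that $G$ permutes these components transitively with stabiliser $T(W)$. First I would check that each translate $W^g$ is again a \emph{simple} $kN$-submodule of $\operatorname{Res}^G_N(V)$: normality of $N$ gives $n(gw) = g\bigl((g^{-1}ng)w\bigr)$ with $g^{-1}ng \in N$, so $W^g$ is $N$-stable, and $w \mapsto gw$ is a bijection $W \to W^g$ intertwining the $g$-twisted $N$-actions, hence it transports simplicity. Next, $\sum_{g \in G} W^g$ is visibly a $kG$-submodule of $V$ and is nonzero (it contains $W$), so by simplicity of $V$ it equals $V$; choosing coset representatives $g_1,\dots,g_r$ for $N$ in $G$ (finitely many, since $[G:N]<\infty$) and using $W^n = W$ for $n \in N$, this collapses to the finite sum $V = W^{g_1} + \cdots + W^{g_r}$. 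In particular $\operatorname{Res}^G_N(V)$ is a sum of simple submodules, hence semisimple.

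Then I would write the isotypic decomposition $\operatorname{Res}^G_N(V) = \bigoplus_S V_S$, the sum over isomorphism classes $S$ of simple $kN$-modules, where $V_S$ collects all simple submodules isomorphic to $S$; by the previous step only the classes of the $W^{g_i}$ occur. The key structural point is that $G$ permutes the $V_S$: for $g \in G$, the linear operator implementing $g$ sends a simple submodule of type $S$ to one of type $S^g$ (by the first step), so $g \cdot V_S = V_{S^g}$. Since $V = \sum_i W^{g_i}$, every occurring component is reached from $V_W$, so $G$ acts transitively on the set of occurring isotypic components, and the stabiliser of $V_W$ is exactly $\{g \in G : W^g \cong W\} = T(W)$. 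Hence the occurring components are indexed by a transversal $X$ of $T(W)$ in $G$, with $V_{xW}$ the $W^x$-isotypic component. Finally, for $x \in X$ the map $v \mapsto xv$ is a $k$-linear bijection $V_W \to V_{xW}$ carrying a decomposition of $V_W$ into copies of $W$ onto a decomposition of $V_{xW}$ into copies of $W^x$, so all these components have the same length $e$, i.e. $V_{xW} \cong (W^x)^{\oplus e}$; summing over $X$ yields the stated isomorphism.

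The step I expect to be the main obstacle — indeed essentially the only place any care is needed — is the claim that $g$ carries the $W$-isotypic component \emph{precisely} onto the $W^g$-isotypic component (so that its stabiliser is exactly the inertia group $T(W)$) together with the fact that the multiplicity $e$ is the same in every component. Both reduce to the observation that the $k$-linear automorphism of $V$ effecting the action of $g$, suitably restricted, is a semilinear $kN$-isomorphism $W^{\oplus e} \xrightarrow{\ \sim\ } (W^g)^{\oplus e}$; once this is in place, transitivity of the $G$-action and uniqueness of the isotypic decomposition — both valid over an arbitrary field $k$ — make the orbit-counting and the multiplicity comparison immediate.
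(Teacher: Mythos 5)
Your argument is correct: it is the standard textbook proof of Clifford's theorem (translates $gW$ are simple, their sum is a $kG$-submodule and hence all of $V$, semisimplicity and the isotypic decomposition follow, $G$ permutes the occurring isotypic components transitively with stabiliser exactly $T(W)$, and translation by $x$ matches multiplicities). The paper itself offers no proof to compare against --- it quotes the result from Clifford's original article --- so there is no divergence of method to discuss; the only point you leave implicit is that $e$ is finite, which indeed follows from your own step showing $\operatorname{Res}^G_N(V)$ is a sum of at most $[G:N]$ simple $kN$-modules and hence has finite length.
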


Clifford's theorem has led to an area of representation called Clifford theory that relates representations of $N$ with representations of $G$. See \cite{isaacs2006character} or \cite[\S 7]{craven2019representation} for more on the general theory.

We will now describe the irreducible characters of the groups $G(m,m,n)$ following \cite[\S 3.4.2]{lasy2012traces} to emphasize the role of Clifford theory.

The character version of Clifford's theorem is as follows:
\begin{theorem}[Clifford's Theorem]
    (We will assume that $G$ is finite) Let $H$ be a normal subgroup of $G$, $\phi$ an irreducible character of $G$, and $\chi$ an irreducible character of $H$ such that $\langle \phi_H, \chi \rangle_H \neq 0$. Then 
    \[
        \phi_H = \langle \phi_H, \chi \rangle_H \sum_{g \in G / I_G(\chi)} \chi^g,
    \]
    where $\chi^g(h) = \chi(g^{-1} h g)$ and $I_G(\chi) = \{g \in G : \chi^g(h) = \chi(h) \text{ for all } h \in H\}$.
\end{theorem}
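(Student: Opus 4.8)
The plan is to deduce this character-theoretic statement from the module-theoretic Clifford's Theorem already recorded above. Let $V$ be a $\mathbb{C}G$-module affording $\phi$. Since $\langle\phi_H,\chi\rangle_H\neq 0$, the restriction $\operatorname{Res}^G_H V$ contains a simple $\mathbb{C}H$-submodule $W$ whose character is $\chi$. Applying the module version (with $k=\mathbb{C}$, $N=H$) produces a positive integer $e$ and a transversal $X$ to $T(W)$ in $G$ with
\[
    \operatorname{Res}^G_H V \;\cong\; \bigoplus_{x\in X}\bigl(W^x\bigr)^{\oplus e}.
\]
The remaining work is to take characters of both sides, after matching up the notation.

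First I would identify the character of the twisted module $W^x = xW\subseteq V$: for $h\in H$ and $w\in W$ one has $h\cdot(xw) = x\cdot\bigl((x^{-1}hx)w\bigr)$, so in the basis $\{xw_i\}$ of $W^x$ the matrix of $h$ equals the matrix of $x^{-1}hx$ in the basis $\{w_i\}$ of $W$; hence the character of $W^x$ is $h\mapsto\chi(x^{-1}hx)=\chi^x(h)$. The same computation shows $W^g\cong W$ as $\mathbb{C}H$-modules (equivalently $\chi^g=\chi$) exactly when $g\in I_G(\chi)$, so $T(W)=I_G(\chi)$ and $X$ may be chosen to be a set of representatives for the cosets $G/I_G(\chi)$. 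Taking characters in the displayed isomorphism then gives
\[
    \phi_H \;=\; e\sum_{x\in G/I_G(\chi)}\chi^x .
\]
Finally, since $\chi\mapsto\chi^g$ permutes $\operatorname{Irr}(H)$ and $\chi^g=\chi$ forces $g\in I_G(\chi)$, exactly one representative in the transversal (the one for the coset $I_G(\chi)$) has $\chi^x=\chi$; hence the multiplicity of $\chi$ on the right is $e$, and comparison with the left gives $e=\langle\phi_H,\chi\rangle_H$, which is the claimed identity.

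I do not expect a genuine obstacle here: the result is classical and, granted the module form, the proof is bookkeeping. The only points requiring care are the conventions --- the direction of conjugation in $\chi^g(h)=\chi(g^{-1}hg)$ and the left-versus-right coset conventions in $I_G(\chi)$ and the transversal $X$ --- which must be kept consistent with the module statement. If a self-contained argument is preferred, the essential facts are that $\chi^g\in\operatorname{Irr}(H)$ for all $g$; that $\langle\phi_H,\chi^g\rangle_H=\langle(\phi_H)^g,\chi^g\rangle_H=\langle\phi_H,\chi\rangle_H$ because $\phi=\phi^g$ is a class function on $G$ and conjugation by $g$ is a bijection of $H$, so that all $G$-conjugates of $\chi$ occur in $\phi_H$ with the common multiplicity $e$; and that the $\mathbb{C}G$-submodule $\sum_{g\in G}gW$ of $V$ is nonzero and $G$-stable, hence all of $V$ by simplicity, so that every irreducible constituent of $\phi_H$ is a $G$-conjugate of $\chi$.
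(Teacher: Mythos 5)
Your argument is correct. Note that the paper does not actually prove this statement: it is quoted as the classical character-theoretic form of Clifford's theorem (cf.\ Isaacs), stated immediately after the module-theoretic version, so there is no proof of record to compare against. Your deduction from that module version is exactly the expected bookkeeping and it checks out: the character of $W^{x}=xW$ is $h\mapsto\chi(x^{-1}hx)=\chi^{x}(h)$, so $T(W)=I_G(\chi)$; the map $x\mapsto[W^{x}]$ is constant on left cosets $xI_G(\chi)$, so the transversal may be taken to represent $G/I_G(\chi)$, matching the sum in the statement; and since $\chi^{x}=\chi^{x'}$ iff $x^{-1}x'\in I_G(\chi)$, exactly one summand equals $\chi$, which identifies $e=\langle\phi_H,\chi\rangle_H$. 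Your closing self-contained sketch (all $G$-conjugates of $\chi$ occur in $\phi_H$ with equal multiplicity because $\phi$ is a class function on $G$, and $\sum_{g}gW=V$ by simplicity, so no other constituents occur) is the standard textbook proof and would serve equally well on its own; the only care needed in either route is the consistency of the conjugation and coset conventions, which you have handled correctly.
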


We will also need:
\begin{theorem}[\cite{isaacs2006character}, Theorem 6.11]
\label{thm:char}
    Let $H \trianglelefteq G$, $\chi \in \operatorname{Irr} H$, and $T = I_G(\chi)$. Define
    \[
        A = \{\psi \in \operatorname{Irr} T : \langle \psi_H , \chi \rangle_H \neq 0\}, \qquad B = \{\phi \in \operatorname{Irr} G : \langle \phi_H, \chi \rangle_H \neq 0\}.
    \]
    \begin{enumerate}
        \item If $\psi \in A$ then $\operatorname{Ind}_{T}^G \psi$ is irreducible.
        \item The map $\psi \mapsto \operatorname{Ind}_{T}^G \psi$ is a bijection of $A$ onto $B$.
        \item If $\operatorname{Ind}_T^G \psi = \phi$ with $\psi \in A$, then $\psi$ is the unique irreducible constituent of $\phi_{T}$ which lies in $A$, and $\langle \psi_H, \chi \rangle_H = \langle \phi_H, \chi \rangle_H$.
    \end{enumerate}
\end{theorem}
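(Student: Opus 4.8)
The plan is to run the classical Clifford-correspondence argument, whose only ingredients are Clifford's theorem (stated above) applied to the normal inclusion $H \trianglelefteq T$, Frobenius reciprocity, and Mackey's subgroup formula for the restriction of an induced character. Throughout one uses the chain $H \leq T \leq G$ together with the elementary fact that $\chi^g = \chi$ if and only if $g \in T$, so that for $g \notin T$ the conjugate ${}^g\chi \in \operatorname{Irr}(H)$ is distinct from $\chi$.

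First I would establish (1). Fix $\psi \in A$. Since $\chi$ is $T$-invariant, Clifford's theorem for $H \trianglelefteq T$ forces $\psi_H = e\chi$ for a positive integer $e$, because $\chi$ is its own only $T$-conjugate. To see that $\operatorname{Ind}_T^G\psi$ is irreducible I would compute its norm: by Frobenius reciprocity and Mackey, $\langle \operatorname{Ind}_T^G\psi, \operatorname{Ind}_T^G\psi\rangle_G = \sum_g \langle \psi_{T \cap {}^gT}, ({}^g\psi)_{T \cap {}^gT}\rangle$, the sum taken over representatives $g$ of the double cosets $T\backslash G / T$. The coset of $g \in T$ contributes $\langle\psi,\psi\rangle = 1$. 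For $g \notin T$ one has $H \leq T \cap {}^gT$ (as $H$ is normal), and restricting the two characters further to $H$ gives $\psi_H = e\chi$ against $({}^g\psi)_H = e\cdot{}^g\chi$ with ${}^g\chi \neq \chi$; hence they have no common irreducible constituent and the corresponding term vanishes. So the norm is $1$, and since an induced character is a genuine character, $\operatorname{Ind}_T^G\psi$ must be irreducible.

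Next I would record the companion Mackey computation — again using that $H$ is normal — namely $(\operatorname{Ind}_T^G\psi)_H = e\sum_{gT \in G/T} {}^g\chi$, a sum in which every $G$-conjugate of $\chi$ occurs exactly once; in particular $\langle(\operatorname{Ind}_T^G\psi)_H, \chi\rangle = e = \langle\psi_H,\chi\rangle$. Part (3) then follows quickly: if $\phi = \operatorname{Ind}_T^G\psi$ with $\psi \in A$, Frobenius reciprocity gives $\langle\phi_T,\psi\rangle = 1$, and the Mackey decomposition of $\phi_T$ shows that every irreducible constituent of $\phi_T$ other than $\psi$ lies inside a summand $\operatorname{Ind}_{T\cap {}^gT}^T(({}^g\psi)_{T\cap {}^gT})$ with $g \notin T$, whose restriction to $H$ involves only conjugates ${}^{gt}\chi$ with $gt \notin T$ — none of which is $\chi$ — so such a constituent is not in $A$. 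Hence $\psi$ is the unique constituent of $\phi_T$ lying in $A$, and the multiplicity identity is precisely the one just recorded.

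Finally I would assemble (2). The assignment $\psi \mapsto \operatorname{Ind}_T^G\psi$ maps $A$ into $B$ by part (1) and the multiplicity identity; it is injective since by part (3) the only possible preimage of $\phi$ is the unique constituent of $\phi_T$ lying in $A$; and it is surjective because, given $\phi \in B$, writing $\phi_T = \sum_i b_i \psi_i$ and restricting to $H$ yields $0 \neq \langle\phi_H,\chi\rangle = \sum_i b_i \langle(\psi_i)_H,\chi\rangle$ by Clifford for $H \trianglelefteq T$, so some $\psi_j$ occurs in $\phi_T$ with $\psi_j \in A$, and then $\phi$ is a constituent of the irreducible character $\operatorname{Ind}_T^G\psi_j$, forcing $\phi = \operatorname{Ind}_T^G\psi_j$. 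The one genuinely technical point is the Mackey bookkeeping in (1) and (3): one must pin down that characters arising from different $(T,T)$-double cosets restrict to $H$ with disjoint sets of $G$-conjugates of $\chi$ in their support, and this is exactly where the hypothesis $T = I_G(\chi)$ is used.
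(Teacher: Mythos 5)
Your argument is correct: Clifford's theorem applied to $H \trianglelefteq T$ gives $\psi_H = e\chi$, the Mackey--Frobenius norm computation (with the observation that $H \leq T \cap {}^gT$ and ${}^g\chi \neq \chi$ for $g \notin T$) yields $\langle \operatorname{Ind}_T^G\psi, \operatorname{Ind}_T^G\psi\rangle = 1$, the companion computation $(\operatorname{Ind}_T^G\psi)_H = e\sum_{gT \in G/T} {}^g\chi$ gives the multiplicity identity and the uniqueness statement in (3), and the surjectivity argument via an irreducible constituent of $\phi_T$ lying in $A$ is sound. Note, however, that the paper does not prove this statement at all: it is quoted verbatim as Theorem 6.11 of Isaacs and used as a black box in the Clifford-theoretic description of $\operatorname{Irr}(G(m,m,n))$, so there is no in-paper proof to compare against. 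For what it is worth, Isaacs' own proof runs the bookkeeping slightly differently: rather than computing the norm of $\operatorname{Ind}_T^G\psi$ by Mackey's double-coset formula, he shows directly from the induced character formula that $\langle (\operatorname{Ind}_T^G\psi)_H, \chi\rangle = \langle \psi_H, \chi\rangle$ (using that the $G$-conjugates ${}^g\chi$ with $g \notin T$ are distinct from $\chi$) and then pins down irreducibility by a degree comparison against the Clifford decomposition of a constituent $\phi$ lying over $\chi$; your Mackey-norm route and his degree-counting route are equivalent in content, and yours is arguably the more transparent packaging of where the hypothesis $T = I_G(\chi)$ enters.
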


Suppose that $G = H \rtimes \langle t \rangle$, with $t^m = 1$. Now for $\chi \in \operatorname{Irr} H$, the inertia group is $T = I_{G}(\chi) = H \rtimes \langle t^d \rangle$ for $d$ a divisor of $m$. To see this, note that if $t^k \in T$, then
\[
    \chi((nt^k)^{-1}h(nt^{k})) = \chi(t^{-k}n^{-1}hnt^k) = \chi(n^{-1}hn) = \chi(h) \implies nt^k \in T,
\]
for all $n,h \in H$.
If $V$ is a $\mathbb{C}$-vector space affording the irreducible representation corresponding to $\chi$, then there exists an isomorphism $\varphi_0 : V \to V$ such that $t^{-d}ht^d\varphi_0(v) = \varphi_0(hv)$ for any $h \in H$ and $v \in V$ (since $t^d \in T$). By Schur's lemma, there exists $\lambda \in \mathbb{C}$ such that $\varphi := \lambda \varphi_0$ satisfies $\varphi^{m/d} = \text{Id}_V$ (the exponent here signifies repeated composition). Now define an action of $t^d$ on $V$ by $t^d \cdot v = f^{-1}(v)$. This gives an action of $T$ on $V$ because 
\[
    h_1 f^{-k_1}(h_2 f^{-k_2}(v)) = h_1(t^{dk_1}h_2t^{-dk_1}) f^{-(k_1 + k_2)}(v).
\]
Hence there exists $\psi \in \operatorname{Irr} T$ such that $\psi_H = \chi$.

Let $\bar{\chi}$ be any irreducible character of $T$ such that $\bar{\chi}_H = \chi$ (we have just shown that there is at least one such character). Now, $\operatorname{Ind}_H^T \chi = \sum_{j=1}^{m/d} \bar{\chi} \otimes \xi^j$, where $\xi \in \operatorname{Irr} T$ is the linear character with $\xi(H) = 1$ and $\xi(t^d) = \exp(2d\pi i/m)$ (this can be easily checked using Frobenius reciprocity and comparing degrees). So the $\sum_{j=1}^{m/d} \bar{\chi} \otimes \xi^j$ are precisely the irreducible characters of $T$ which restrict to $\chi$ on $H$, and $\operatorname{Ind}_H^G \chi = \sum_{j=1}^{m/d} \operatorname{Ind}_T^G (\bar{\chi} \otimes \xi^j)$, where each $\operatorname{Ind}_T^G (\bar{\chi} \otimes \xi^j)$ is irreducible by \Cref{thm:char}. Now $\operatorname{Ind}_T^G (\bar{\chi} \otimes \xi^j) = \operatorname{Ind}_T^G (\bar{\chi}) \otimes \epsilon^j$, where $\epsilon$ is the linear character with $\epsilon(H) = 1$ and $\epsilon(t) = \exp(2\pi i / m)$. So $\operatorname{Ind}_H^G \chi = \sum_{j=1}^{m/d} \operatorname{Ind}_T^G (\bar{\chi}) \otimes \epsilon^j$, and $m/d$ can be characterized as the smallest positive integer $k$ such that $\operatorname{Ind}_T^G (\bar{\chi}) \otimes \epsilon^k = \operatorname{Ind}_T^G (\bar{\chi})$.

Now suppose that we have some $\phi \in \operatorname{Irr} G$ such that $\langle \phi_H, \chi \rangle_H \neq 0$. By Frobenius reciprocity, it then follows that $\phi = \operatorname{Ind}_T^G(\bar{\chi}) \otimes \epsilon^j$ for some $j \in \{1,\dots,m/d\}$. Since all of the $\bar\chi \otimes \xi^j$ satisfy $(\bar\chi \otimes \xi^j)_H = \chi$, we can assume that $\phi = \operatorname{Ind}_T^G(\bar\chi)$. So $m/d$ is the smallest positive integer $k$ such that $\phi \otimes \epsilon^k = \phi$. Now $\langle \phi_H, \chi \rangle_H = \langle \bar\chi_H, \chi \rangle_H = 1$, so by Clifford's theorem we have
\[
    \phi_H = \sum_{j=0}^{d-1} \chi^{t^j}.
\]

Recall (see \cite[\S 2.7]{lehrer2009unitary}) that $G(m,m,n)$ is the subgroup of $G(m,1,n)$ generated by the reflections $\{t^{-1}s_1 t, s_1,s_2,\dots,s_{n-1}\}$. Moreover, $G(m,m,n)$ is a normal subgroup of $G(m,1,n)$, and the quotient $G(m,1,n)/G(m,m,n)$ is cyclic, generated by the image of $t$. Hence $G(m,1,n) \cong G(m,m,n) \rtimes \langle t \rangle$, so we can apply the above result to these groups. 

For an $m$-partition $\bm \lambda = (\lambda^{(0)}, \dots, \lambda^{(m-1)})$ of $n$, denote by $\pi(\bm\lambda)$ the cyclic permutation $(\lambda^{(m-1)}, \lambda^{(0)},\dots,\lambda^{(m-2)})$. Let $\langle \pi \rangle$ denote the cyclic group of order $m$, and let $s(\bm\lambda)$ be the size of the subgroup of $\langle \pi \rangle$ fixing $\bm\lambda$. Notice that $\chi_{\bm\lambda} \otimes \epsilon = \chi_{\pi(\bm\lambda)}$, so the smallest positive integer $k$ satisfying $\chi_{\bm\lambda} = \chi_{\bm\lambda} \otimes \epsilon^k = \chi_{\pi^k(\bm\lambda)}$ is $m / s(\bm\lambda)$. It follows that the restriction of $\chi_{\bm\lambda}$ to $G(m,m,n)$ is the sum of $s(\bm\lambda)$ distinct (conjugate) irreducible characters. Moreover, each irreducible character of $G(m,m,n)$ is in the restriction of exactly $m/s(\bm\lambda)$ irreducible characters of $G(m,1,n)$, which are equivalent under the action of $\langle \pi \rangle$ and have the same decomposition when restricted to $G(m,m,n)$. That is, the correspondence is as follows:
\begin{align*}
    \{\chi_{\bm\lambda}, \chi_{\pi(\bm\lambda)}, \dots, \chi_{\pi^{m/s(\bm\lambda) - 1}(\bm\lambda)}\} \in \operatorname{Irr}(G(m,1,n)) &\leftrightarrow \left\{\psi, \psi^{t},\dots, \psi^{t^{s(\bm\lambda)-1}}\right\} \in \operatorname{Irr}(G(m,m,n)) \\
    (\chi_{\pi^j(\bm\lambda)})_{G(m,m,n)} &= \psi + \psi^t + \cdots + \psi^{t^{s(\bm\lambda) - 1}} \\
    \chi_{\bm\lambda} + \chi_{\pi(\bm\lambda)} + \cdots + \chi_{\pi^{m/s(\bm\lambda) - 1}(\bm\lambda)} &= \operatorname{Ind}_{G(m,m,n)}^{G(m,1,n)} \psi^{t^j}.
\end{align*}

There is a similar description of the irreducible characters of $G(m,p,n)$, and there are are explicit models for the irreducible representations of $G(m,p,n)$ related to those shown above in the case $G(m,1,n)$. See \cite{ariki1995representation} and \cite[\S 2.4]{marin2010automorphisms}.

\begin{example}[Exterior powers and Galois twists]

Let $V$ be the reflection representation of $G(m,1,n)$. Then the reflection representation $\tilde{V}$ of $G(m,m,n)$ is the restriction of the action of $G(m,1,n)$ on $V$. Now consider the $G(m,m,n)$ representation $\Lambda^k \tilde{V}^{\sigma_p}$, where $p$ is coprime to $h = \max\{n,(n-1)m\}$ (the Coxeter number of $G(m,m,n)$) and $\sigma_p \in \operatorname{Gal}(\mathbb{Q}(\zeta_h)/\mathbb{Q})$. Assuming that $\Lambda^k V^{\sigma_p}$ is well-defined as a representation of $G(m,1,n)$, it is easy to see (by looking at the characters, for example) that the restriction of $\Lambda^k V^{\sigma_p}$ to $G(m,m,n)$ is precisely $\Lambda^k \tilde{V}^{\sigma_p}$.

For the groups $G(m,1,n)$, the field of definition is $\mathbb{Q}(\zeta_m)$. If $n > (n-1)m$, there are only two possibilities:
\begin{itemize}
    \item If $n = 1$, then $G(m,1,1)$ is a cyclic group and $G(m,m,1)$ is the trivial group. In this case $\operatorname{Gal}(\mathbb{Q}(\zeta_h)/\mathbb{Q})$ is trivial, so $\Lambda^k V^{\sigma_p}$ is clearly well-defined.

    \item If $n \geq 2$ and $m = 1$, then $G(m,1,n) = G(1,1,n)$ is not irreducible.
\end{itemize}
Otherwise $(n-1)m > n$, and $\Lambda^k V^{\sigma_p}$ is well defined because $\mathbb{Q}(\zeta_m) \subseteq \mathbb{Q}(\zeta_h)$.

So the $m$-partition (up to cyclic permutation) corresponding to $\Lambda^k \tilde{V}^{\sigma_p}$ is $\bm\lambda_{k,p} = (n-k,\varnothing,\dots,\varnothing, 1^k,\varnothing, \dots,\varnothing)$, where the $1^k$ is in the $p$th slot (mod $m$; indexing starts at 0). If $\bm\lambda_{k,p}$ has cyclic symmetry, then $n = 2$, $m$ is even, and $p = m/2$. In this case $(n-1)m > n$, so since $p$ must be relatively prime to $m$, we have $m = 2$. But the group $G(2,2,2)$ is not irreducible. So we have $s(\bm\lambda_{k,p}) = 1$ which agrees with the observation that the restriction of $\Lambda^k V^{\sigma_p}$ to $G(m,1,n)$ is precisely $\Lambda^k \tilde{V}^{\sigma_p}$.

Notice that $\Lambda^n \tilde{V}^{\sigma_p}$ does not depend on $p$ because $\bm\lambda_{n,p} = \pi^p(\bm\lambda_{n,1})$.
\end{example}

\subsection{Unipotent characters and Generic Degrees for \texorpdfstring{$G(m,m,n)$}{G(m,m,n)}}
\label{sec:m,m,n}

We now give Malle's construction \cite{malle1995unipotente} \cite{lasy2012traces} of the unipotent characters and generic degrees for the groups $G(m,m,n)$.

Define an equivalence relation on $m$-symbols as the symmetric transitive closure of the two operations: cyclic permutation of the $S_i$ in $S$, and \defn{simultaneous shift} of all of the $S_i$ given by
\[
    (\lambda_{i,1}, \dots, \lambda_{i,M_i}) \mapsto (0,\lambda_{i,1} + 1, \dots, \lambda_{i,M_i} + 1).
\]
Denote by $[S]$ the equivalence class of an $m$-symbol $S$. 

For $S$ an $m$-symbol with $I(S) \equiv 0 \mod m$, define its \defn{defect} by
\[
    \operatorname{def}(S) = \left(\frac{m-1}{2}I(S) - \sum_{i=0}^{m-1} i M_i\right) \mod m.
\]
Define $s(S)$ as the cardinality of the set $\{0 \leq i \leq e-1 : \pi^i(S) = S\}$, where $\pi$ cyclically permutes the $S_i$ in $S$. In the case $I(S) = mk$, the formula for rank takes the form
\[
    \operatorname{rk}(S) = \sum_{i=0}^{m-1} \sum_{\lambda \in S_i} \lambda - m \binom{k}{2}.
\]

\begin{definition}
\label{def:uniDeg2}
    The \defn{unipotent characters} of $G(m,m,n)$ are the elements of the multiset 
    \[
        \operatorname{Uch}(G(m,m,n)) := \{\text{equivalence classes $[S]$} : \operatorname{rk}(S) = n, \,\, I(S) \equiv \operatorname{def}(S) \equiv 0 \mod m\},
    \]
    where the multiplicity of $[S]$ in $\operatorname{Uch}(G(m,m,n))$ is $s(S)$ (none of these conditions/values depend on the choice of representative of $[S]$). 
    
    Families of unipotent characters will be defined as follows: If $s(S) = m$, we'll say that each copy of $[S]$ belongs to its own family consisting of only one element. If $[S]$, $[S']$ have representatives $S_1 \in [S]$ and $S_1' \in [S']$ such that the entries of $S_1$ and $S_1'$ coincide as multisets and $s(S) < m$, then $s(S') < m$, and we'll say that all copies of $[S]$ and $[S']$ belong to the same family.

    Let $S$ be any representative of $[S]$, then the \defn{generic degree} of $[S]$ is 
    \[
        \operatorname{Deg}_{[S]}(q) := \frac{m}{s(S)} \cdot \frac{(-1)^{\binom{m}{2}\binom{\ell}{2} + \gamma(S)} (q^n-1) \left(\prod_{i=1}^{n-1}(q^{mi}-1)\right)\cdot \prod_{i=0}^{m-1} \prod_{j=i}^{m-1} \prod_{\substack{(\lambda, \mu) \in S_i \times S_j \\ \mu < \lambda \text{ if } i = j}}(q^\lambda \zeta_m^i - q^\mu \zeta_m^j)}{\tau(m)^\ell q^{\binom{m(\ell-1)}{2} + \binom{m(\ell-2)}{2} + \cdots} \prod_{i=0}^{m-1} \Theta(S_i, q^m)}
    \]
    where
    \[
        \ell = \frac{I(S)}{m}, \qquad \gamma(S) = \frac{\operatorname{def}(S)}{m}(m\ell - 1),
    \]
    and $\tau(m)$ and $\Theta$ are as in \Cref{def:uniDeg1}. Malle's definition in \cite{malle1995unipotente} does not include the $(-1)^{\gamma(S)}$ term, in which case the generic degree is defined only up to sign. We have followed \cite{lasy2012traces} in this definition so that the generic degree does not depend on the choice of representative.
\end{definition}

Again there is an inclusion $\operatorname{Irr}(G(m,m,n)) \subset \operatorname{Uch}(G(m,m,n))$. Let $\bm\lambda = (\lambda^{(0)},\dots,\lambda^{(m-1)})$ be an $m$-partition of $n$. Let $\alpha_i = (0 \leq \alpha_{i,1} \leq \cdots \leq \alpha_{i,M})$ be the parts of $\lambda^{(i)}$ written in increasing order for $i \geq 0$ with 0's added if necessary. Define an $m$-symbol $\tilde{S}_{\bm\lambda} = (\tilde{S}_0,\dots,\tilde{S}_{m-1})$ by $\tilde{S}_{i,j} = \alpha_{i,j} + j - 1$. Then $\tilde{S}_{\bm\lambda}$ is a symbol of rank $n$ with $I(\tilde{S}_{\bm\lambda}) = mM$ and $\operatorname{def}(\tilde{S}_{\bm\lambda}) = 0$. The inclusion is then defined by mapping the $s(\bm\lambda)$ components of the restriction $(\chi_{\bm\lambda})_{G(m,m,n)}$ to the $s(\tilde{S}_{\bm\lambda}) = s(\bm\lambda)$ copies of $[\tilde{S}_{\bm\lambda}]$ in $\operatorname{Uch}(G(m,m,n))$.

Under this inclusion, the definition of the generic degree of an irreducible character given in \Cref{def:genDeg} coincides with the definition for unipotent characters.

\begin{proposition}[\cite{malle1995unipotente}, Proposition 5.5]
\label{prop:fegForm2}
    The fake degree of a component of the restriction $(\chi_{\bm\lambda})_{G(m,m,n)}$, with corresponding $m$-symbol $\tilde{S}_{\bm\lambda} = (\tilde{S}_0,\dots,\tilde{S}_{m-1})$, is equal to
    \[
        \frac{q^n - 1}{s(\tilde{S}_{\bm\lambda})}\cdot \left(\prod_{i=1}^{n-1} (q^{mi}-1)\right)\cdot \left(\prod_{i=0}^{m-1}\frac{\Delta(\tilde{S}_i,q^m)}{\Theta(\tilde{S}_i,q^m)} \right) \cdot \frac{\sum_{j=0}^{m-1} \prod_{i=1}^{m-1}\prod_{\lambda \in \tilde{S}_{(i+j) \mod m}}q^{(m-i)\lambda}} {q^{\binom{m(\ell-1)}{2} + \binom{m(\ell - 2)}{2} + \cdots}},
    \]
    where $\Delta$ is as in \Cref{prop:fegForm1} and $\ell$ and $\Theta$ are as in \Cref{def:uniDeg2}. We have again followed \cite[Proposition 5.4.13]{lasy2012traces} so that this formula agrees with the usual definition of fake degrees.
\end{proposition}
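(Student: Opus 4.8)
The plan is to deduce this from the $G(m,1,n)$-formula of \Cref{prop:fegForm1} by combining the Clifford theory recalled above with a comparison of coinvariant algebras. Write $G := G(m,1,n)$ and $H := G(m,m,n)$, and recall $S^{G} = \mathbb{C}[p_1,\dots,p_n]$ with $p_i = e_i(x_1^m,\dots,x_n^m)$ and $S^{H} = \mathbb{C}[p_1,\dots,p_{n-1},\delta]$ with $\delta = x_1\cdots x_n$ and $\delta^{m} = p_n$. Since $H$ is itself a reflection group, $S^{H}$ is a polynomial ring over which $S$ is free, and $S^{H}$ is free over $S^{G}$ with homogeneous basis $1,\delta,\dots,\delta^{m-1}$; hence, as graded $H$-modules, $S/S^{G}_+ \cong (S^{H}/S^{G}_+ S^{H})\otimes_{\mathbb{C}}(S/S^{H}_+)$, and because $\delta$ (hence each $\bar\delta^{\,k}$) is $H$-fixed the first factor has Hilbert series $1 + q^{n} + \cdots + q^{(m-1)n}$ and trivial $H$-action. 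Taking the graded multiplicity of a constituent $\psi$ of $(\chi_{\bm\lambda})_{H}$ on both sides, and using that by Clifford theory $\psi$ occurs with multiplicity one in exactly the $m/s(\bm\lambda)$ distinct restrictions $(\chi_{\pi^{j}\bm\lambda})_{H}$ --- each such distinct character occurring $s(\bm\lambda)$ times as $j$ ranges over $\{0,\dots,m-1\}$ --- gives
\[
    \operatorname{Feg}^{H}_{\psi}(q) \;=\; \frac{q^{n}-1}{(q^{mn}-1)\,s(\bm\lambda)}\sum_{j=0}^{m-1}\operatorname{Feg}^{G}_{\chi_{\pi^{j}\bm\lambda}}(q).
\]

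Next I would substitute \Cref{prop:fegForm1} into this. The factor $\prod_{i=1}^{n}(q^{mi}-1)$ is common to every summand and $\tfrac{q^{n}-1}{q^{mn}-1}\prod_{i=1}^{n}(q^{mi}-1) = (q^{n}-1)\prod_{i=1}^{n-1}(q^{mi}-1)$, producing the prefactor of \Cref{prop:fegForm2}. For the remaining factors I would compare the $G(m,1,n)$-symbol $S_{\pi^{j}\bm\lambda}$ (whose zeroth row has one more entry) with the symbol $\tilde S_{\bm\lambda}$ (all rows of length $M$): after normalizing to a common $M$, one passes from $\tilde S_{\bm\lambda}$ to $S_{\pi^{j}\bm\lambda}$ by cyclically rotating the rows so that row $(-j \bmod m)$ of $\tilde S_{\bm\lambda}$ becomes the zeroth row and then applying one simultaneous shift $A \mapsto \{0\}\cup\{a+1 : a\in A\}$ to that row. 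A short computation gives $\tfrac{\Delta(A',q^{m})}{\Theta(A',q^{m})} = q^{m\binom{|A|}{2}}\tfrac{\Delta(A,q^{m})}{\Theta(A,q^{m})}$, so $\prod_{i=0}^{m-1}\tfrac{\Delta}{\Theta}$ for $S_{\pi^{j}\bm\lambda}$ equals $q^{m\binom{M}{2}}\prod_{i=0}^{m-1}\tfrac{\Delta(\tilde S_{i},q^{m})}{\Theta(\tilde S_{i},q^{m})}$, independent of $j$; since $\binom{mt+1}{2} = \binom{mt}{2}+mt$ (and $\ell = M$ in both formulas) this surplus $q^{m\binom{M}{2}}$ is exactly what converts the denominator $q^{\binom{m(\ell-1)+1}{2}+\binom{m(\ell-2)+1}{2}+\cdots}$ of \Cref{prop:fegForm1} into the denominator $q^{\binom{m(\ell-1)}{2}+\binom{m(\ell-2)}{2}+\cdots}$ of \Cref{prop:fegForm2}. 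Finally $\prod_{i=1}^{m-1}\prod_{\lambda\in(S_{\pi^{j}\bm\lambda})_{i}}q^{(m-i)\lambda} = \prod_{i=1}^{m-1}\prod_{\lambda\in\tilde S_{(i-j)\bmod m}}q^{(m-i)\lambda}$, so summing over $j$ and reindexing $j \mapsto -j$ yields the numerator $\sum_{j=0}^{m-1}\prod_{i=1}^{m-1}\prod_{\lambda\in\tilde S_{(i+j)\bmod m}}q^{(m-i)\lambda}$ of \Cref{prop:fegForm2}; the factor $\tfrac{1}{s(\bm\lambda)} = \tfrac{1}{s(\tilde S_{\bm\lambda})}$ is already present. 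Assembling these identifications gives the stated formula.

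The only genuine difficulty I anticipate is this last step's bookkeeping: keeping the two symbol conventions straight (oversized zeroth row for $G(m,1,n)$ versus uniform rows for $G(m,m,n)$, together with the normalization needed to match the lengths $M$ across different $j$) and accounting precisely for every power of $q$ introduced by the simultaneous shift, including at the endpoints of the telescoping sums $\binom{m(\ell-1)+1}{2}+\cdots$. Should that verification become too delicate, a fallback is to bypass \Cref{prop:fegForm1} and read the graded $H$-isotypic multiplicities off $S/S^{H}_+$ directly using the seminormal models of \cite{ariki1994hecke}; but the Clifford-theoretic route above reuses the machinery already assembled in this section, so I would attempt it first.
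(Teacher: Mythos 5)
The paper does not actually prove this statement: it is imported verbatim from \cite{malle1995unipotente} (Proposition 5.5), with the normalization adjusted following \cite[Proposition 5.4.13]{lasy2012traces} so that it matches the $\operatorname{Sym}(V^*)$ convention for fake degrees, so there is no in-paper argument to compare against. Your derivation is a genuine proof sketch, and its skeleton is sound: the factorization $S/S^G_+ \cong (S^H/S^G_+S^H)\otimes_{\mathbb C}(S/S^H_+)$ as graded $H$-modules (valid because $S \cong S^H \otimes_{\mathbb C} S/S^H_+$ as graded $S^H$-$H$-bimodules and $S^H = \bigoplus_{k=0}^{m-1} S^G\delta^k$ with $\delta$ an $H$-invariant of degree $n$), combined with the Clifford-theoretic fact from the preceding subsection that each constituent $\psi$ of $(\chi_{\bm\lambda})_H$ occurs with multiplicity one in exactly the $m/s(\bm\lambda)$ distinct characters $\chi_{\pi^j\bm\lambda}$, does give
\[
\operatorname{Feg}^{H}_{\psi}(q) \;=\; \frac{q^{n}-1}{(q^{mn}-1)\,s(\bm\lambda)}\sum_{j=0}^{m-1}\operatorname{Feg}^{G}_{\chi_{\pi^{j}\bm\lambda}}(q),
\]
and as a bonus it shows all $s(\bm\lambda)$ constituents share one fake degree, which the statement tacitly assumes. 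I checked your symbol bookkeeping: the identity $\Delta(A',q^m)/\Theta(A',q^m)=q^{m\binom{|A|}{2}}\Delta(A,q^m)/\Theta(A,q^m)$ for $A'=\{0\}\cup\{a+1:a\in A\}$ is correct, the exponent discrepancy between the two denominators is $\sum_{t=1}^{\ell-1}mt=m\binom{M}{2}$ as you claim, and the cyclic reindexing of the numerator sum is fine. The one step you should make explicit rather than leave as ``normalization'': \Cref{prop:fegForm1} is stated only for the minimal $M$, so to run the sum over $j$ with a common $M$ you must verify that the $G(m,1,n)$ formula is invariant under a simultaneous shift of all rows. This does hold --- the numerator picks up $q^{m\binom{M+1}{2}+(m-1)m\binom{M}{2}+M\binom{m}{2}}=q^{\binom{mM+1}{2}}$, exactly the new leading term in the denominator --- but it is a needed lemma, not mere convention. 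With that inserted, your route (which is close in spirit to Malle's original computation) stands as a self-contained alternative to the paper's citation.
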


\begin{example}
\label{ex:galFeg2}
    The $m$-symbol corresponding to $\tilde{V}^{\sigma_p}$ is given by
    \[
        \tilde{S}_i = \begin{cases}
            (n-1) & \text{if } i = 0, \\
            (1) & \text{if } i = p \mod m, \\
            (0) & \text{otherwise}.
        \end{cases}
    \]
    So
    \[
        \Delta(\tilde{S}_i,q^m) = 1, \qquad \text{and} \qquad \Theta(\tilde{S}_i,q^m) = \begin{cases}
            \prod_{j=1}^{n-1} (q^{mj}-1) & \text{if } i = 0, \\
            (q^m - 1) & \text{if } i = p \mod m, \\
            1 & \text{otherwise}.
        \end{cases}
    \]
    Letting $\chi_{1,p}$ denote the character of $\tilde{V}^{\sigma_p}$, we then have
    \begin{align*}
        \operatorname{Feg}_{\chi_{1,p}}(q) &= \frac{q^n - 1}{q^m - 1} \sum_{j=0}^{m-1} q^{j(n-1) + ((j-p) \mod m)} \\
        &= \frac{q^n - 1}{q^m - 1}\left(\sum_{j=0}^{(p \mod m) - 1} q^{m + jn - (p \mod m))} + \sum_{j= p \mod m}^{m-1} q^{jn-(p\mod m)} \right) \\
        &= \frac{q^n - 1}{q^m - 1}\left((q^m - 1)\sum_{j=0}^{(p \mod m) - 1} q^{jn - (p\mod m)} + \sum_{j= 0}^{m - 1} q^{jn - (p \mod m)} \right) \\
        &= (q^n - 1) \sum_{j=0}^{(p \mod m) - 1} q^{jn - (p\mod m)} + q^{-(p \mod m)} \cdot \frac{q^n - 1}{q^m - 1} \cdot \frac{q^{mn}-1}{q^n - 1} \\
        &= q^{- (p \mod m)}\left(q^{n(p \mod m)} - 1 \right) + \sum_{j = 0}^{n-1} q^{jm-(p \mod m)} \\ 
        &= q^{(n-1)(p \mod m)} + \sum_{j = 1}^{n-1} q^{jm-(p \mod m)} = q^{p(n-1) \mod m(n-1)} + \sum_{j=1}^{n-1} q^{p(jm - 1) \mod m(n-1)} \\
        &= \sum_{j=1}^n q^{pe_i \mod h},
    \end{align*}
    which demonstrates \Cref{ex:galoisFeg}.
\end{example}

\begin{proof}[Proof of \Cref{thm:galDeg}]
    Let $\bm\lambda$ be the $m$-partition of $n$ corresponding to $\Lambda^k V^{\sigma_p}$, where $V$ is the reflection representation of $G(m,1,n)$ and $p$ is coprime to $(n-1)m$. The result is obvious for $k = 0$, so suppose that $k \geq 1$. Then $S_{\bm\lambda} = (S_0,\dots,S_{m-1})$, where
    \[
        S_i = \begin{cases}
            (0,1,\dots,k-1,n) & \text{if } i = 0, \\
            (1,2,\dots,k) & \text{if } i = p \mod m, \\
            (0,1,\dots,k-1) & \text{otherwise},
        \end{cases}
    \]
    and $\tilde{S}_{\bm\lambda} = (\tilde{S}_0,\dots,\tilde{S}_{m-1})$, where
    \[
        \tilde{S}_i = \begin{cases}
            (0,1,\dots,k-2,n-1) & \text{if } i = 0, \\
            (1,2,\dots,k-1,k) & \text{if } i = p \mod m, \\
            (0,1,\dots,k-2,k-1) & \text{otherwise}.
        \end{cases}
    \]
    When $k = 1$, $\tilde{S}_0 = (n-1)$. Comparing \Cref{def:uniDeg1} and \Cref{def:uniDeg2}, one sees that
    \begin{align*}
        \frac{\operatorname{Deg}_{[\tilde{S}_{\bm\lambda}]}(q)}{\operatorname{Deg}_{S_{\bm\lambda}}(q)} &= \frac{mq^{m\binom{k}{2}}(q^n-1)}{(q^{mn}-1)} \cdot \left( (q^{mn}-1)\prod_{i=1}^{k-1}(q^{mi}-1) \right) \cdot \frac{\left(\prod_{i=0}^{k-2} (q^{n-1} - q^i)\right)}{\left(\prod_{i=0}^{k-2} (q^{k-1} - q^i)\right)\left(\prod_{i=0}^{k-1} (q^{n} - q^i) \right)} \\
        & \qquad \left(\prod_{j = 1}^{m-1} \prod_{\mu = 0}^{k-1} \frac{(q^{n-1} - q^{\mu}\zeta_m^j)}{(q^{k-1} - q^\mu \zeta_m^j)(q^{n} - q^\mu \zeta_m^j)}\right) \cdot \frac{(q^{n-1} - q^{k}\zeta_m^p)(q^{k-1}-\zeta_m^p)(q^n - \zeta_m^p)}{(q^{k-1} - q^k \zeta_m^p)(q^{n} - q^k \zeta_m^p)(q^{n-1}-\zeta_m^p)} \\
        &= mq^{m\binom{k}{2}}(q^n-1)\left(\prod_{i=1}^{k-1}(q^{mi}-1)\right) \frac{1}{q^{(k-1) + \binom{k-1}{2}}(q^n - 1)\prod_{i=1}^{k-1}(q^i - 1)} \\
        & \qquad \left(\prod_{\mu = 0}^{k-2} \frac{(q^{(n-\mu-1)m}-1)(q^{k-\mu-1}-1)(q^{n-\mu} - 1)}{q^{\mu(m-1)}(q^{(k-\mu-1)m}-1)(q^{(n-\mu)m}-1)(q^{n-\mu-1}-1)}\right) \\
        & \qquad \left(\frac{(q^{n-k+1}-1)}{mq^{(k-1)(m-1)}(q^{(n-k+1)m}-1)} \prod_{j=1}^{m-1} (q^{n-k} - \zeta_m^j) \right) \cdot \frac{(q^{n-k-1} - \zeta_m^p)(q^{k-1}-\zeta_m^p)(q^n - \zeta_m^p)}{(q^{k-1} - q^k \zeta_m^p)(q^{n-k} - \zeta_m^p)(q^{n-1}-\zeta_m^p)} \\
        &= \left(\prod_{i=1}^{k-1}(q^{mi}-1)\right) \frac{q^{m\binom{k}{2}}}{q^{(k-1) + \binom{k-1}{2}}\prod_{i=1}^{k-1}(q^i - 1)} \\
        & \qquad \left(\frac{1}{q^{(m-1)\binom{k-1}{2}}} \cdot \frac{\left(\prod_{\mu = n-k+1}^{n-1}(q^{m\mu} - 1) \right)\left(\prod_{\mu = 1}^{k-1} (q^{\mu}-1)\right) \left(\prod_{\mu = n - k+2}^{n} (q^{\mu} - 1)\right)}{\left(\prod_{\mu = 1}^{k-1}(q^{m\mu}-1)\right)\left(\prod_{\mu = n-k+2}^n (q^{m\mu}-1)\right)\left(\prod_{\mu = n-k+1}^{n-1}(q^{\mu}-1)\right)}\right) \\
        & \qquad \left(\frac{(q^{n-k+1}-1)}{q^{(k-1)(m-1)}(q^{(n-k+1)m}-1)} \prod_{j=1}^{m-1} (q^{n-k} - \zeta_m^j) \right) \cdot \frac{(q^{n-k-1} - \zeta_m^p)(q^{k-1}-\zeta_m^p)(q^n - \zeta_m^p)}{(q^{k-1} - q^k \zeta_m^p)(q^{n-k} - \zeta_m^p)(q^{n-1}-\zeta_m^p)} \\
        &= \frac{q^{m\binom{k}{2}}\prod_{j=1}^{m-1} (q^{n-k} - \zeta_m^j)}{q^{m(k-1) + m\binom{k-1}{2}}} \left(\frac{(q^{m(n-k+1)} - 1)(q^{n} - 1)}{(q^{mn}-1)(q^{n-k+1}-1)}\right) \left(\frac{(q^{n-k+1}-1)}{(q^{(n-k+1)m}-1)}  \right)\\
        & \qquad \frac{(q^{n-k-1} - \zeta_m^p)(q^{k-1}-\zeta_m^p)(q^n - \zeta_m^p)}{(q^{k-1} - q^k \zeta_m^p)(q^{n-k} - \zeta_m^p)(q^{n-1}-\zeta_m^p)} \\
        &= \frac{\alpha(q^{n} - 1)(q^{n-k-1} - \zeta_m^p)(q^{k-1}-\zeta_m^p)(q^n - \zeta_m^p)}{(q^{mn}-1)(q^{k-1} - q^k \zeta_m^p)(q^{n-k} - \zeta_m^p)(q^{n-1}-\zeta_m^p)} ,
    \end{align*}
    where
    \[
        \alpha = \prod_{j=1}^{m-1} (q^{n-k} - \zeta_m^j) = \begin{cases}
            m & \text{if } k = n, \\
            \frac{q^{m(n-k)}-1}{q^{n-k}-1} & \text{otherwise}.
        \end{cases}
    \]
    We will first handle the case $k = n$. By \Cref{eqn:deg}, 
    \[
        \operatorname{Deg}_{[\tilde{S}_{\bm\lambda}]}(q) = \frac{q^{n + m\binom{n}{2}}}{q^{n}} = q^{m\binom{n}{2}} \implies \operatorname{Deg}_{[\tilde{S}_{\bm\lambda}]}(\zeta^p_{m(n-1)}) = (-1)^{np}.
    \]
    If $n$ is odd, then $p$ must be odd because it is coprime to $n-1$, so then $np$ is odd. If $n$ is even, then certainly $np$ is even. Thus $\operatorname{Deg}_{[\tilde{S}_{\bm\lambda}]}(\zeta^p_{m(n-1)}) = (-1)^n$.

    Now for $0 < k < n$, by \Cref{eqn:deg},
    \begin{align*}
        \operatorname{Deg}_{[\tilde{S}_{\bm\lambda}]}(q) &= \frac{q^{k + m\binom{k}{2}}(q^{n} - 1)(q^{n-k-1} - \zeta_m^p)(q^{k-1}-\zeta_m^p)\prod_{i=k}^{n-1}(q^{mi}-1)}{m(q^{n-k}-1)(q^{k-1} - q^k \zeta_m^p)(q^{n-1}-\zeta_m^p)(q^k-1)\prod_{j=1}^{n-k-1}(q^{mj}-1)} \\
        &= \frac{q^{k + m\binom{k}{2}}(q^{n} - 1)(q^{n-k-1} - \zeta_m^p)(q^{k-1}-\zeta_m^p)\prod_{i=k}^{n-2}(q^{mi}-1)}{m(q^{n-k}-1)(q^{k-1} - q^k \zeta_m^p)(q^k-1)\prod_{j=1}^{n-k-1}(q^{mj}-1)} \prod_{\substack{j=0 \\ j \neq p \mod m}}^{m-1} (q^{n-1} - \zeta_m^j).
    \end{align*}
    Hence,
    \begin{align*}
        \operatorname{Deg}_{[\tilde{S}_{\bm\lambda}]}(\zeta_{m(n-1)}^p) &= \frac{\zeta_{m(n-1)}^{pk} \zeta_{n-1}^{p\binom{k}{2}}(\zeta_{m(n-1)}^{pn} - 1)(\zeta_{m}^{p}\zeta_{m(n-1)}^{-kp} - \zeta_m^p)(\zeta_{m(n-1)}^{p(k-1)}-\zeta_m^p)\prod_{i=k}^{n-2}(\zeta_{n-1}^{pi}-1)}{m(\zeta_{m(n-1)}^{p(n-k)}-1)(\zeta_{m(n-1)}^{p(k-1)} - \zeta_{m(n-1)}^{pk} \zeta_m^p)(\zeta_{m(n-1)}^{pk}-1)\prod_{j=1}^{n-k-1}(\zeta_{n-1}^{pj}-1)} \\
        & \qquad \cdot \prod_{\substack{j=0 \\ j \neq p \mod m}}^{m-1} (\zeta_m^{p} - \zeta_m^j) \\
        &= \frac{\zeta_{m(n-1)}^{pk} \zeta_{n-1}^{p\binom{k}{2}}(\zeta_{m(n-1)}^{pn} - 1)(\zeta_m^p)(\zeta_{m(n-1)}^{-pk} - 1)(-\zeta_{m(n-1)}^{p(k-1)})(\zeta_{m(n-1)}^{p(n-k)}-1)}{m(\zeta_{m(n-1)}^{p(n-k)}-1)(-\zeta_{m(n-1)}^{p(k-1)})(\zeta_{m(n-1)}^{pn} - 1)(\zeta_{m(n-1)}^{pk}-1)} \\ 
        & \qquad \frac{\prod_{i=k}^{n-2}(\zeta_{n-1}^{pi}-1)}{\prod_{j=1}^{n-k-1}(-\zeta_{n-1}^{pj})(\zeta_{n-1}^{p(n-j-1)}-1)}(\zeta_m^{-p}m) \\
        &= \frac{\zeta_{m(n-1)}^{pk} \zeta_{n-1}^{p\binom{k}{2}}(\zeta_{m(n-1)}^{-pk} - 1)}{(-1)^{n-k-1}\zeta_{n-1}^{p\binom{n-k}{2}}(\zeta_{m(n-1)}^{pk}-1)} = (-1)^{p(2k-n) - (n-k)} = (-1)^{np}(-1)^{n}(-1)^k = (-1)^k.
    \end{align*}

    Now suppose that $\bm\lambda$ is any $m$-partition of $n$ such that $\operatorname{Deg}_{[\tilde{S}_{\bm\lambda}]}(\zeta_{m(n-1)}^p) \neq 0$. Choose $\tilde{S}_{\bm\lambda} = (\tilde{S}_0,\dots,\tilde{S}_{m-1})$ so that $I(\tilde{S}_{\bm\lambda})$ is minimal. Let $\tilde{k} := I(\tilde{S}_{\bm\lambda})/m = \max \{\text{number of parts in } \lambda^{(i)} : i = 0,\dots,n-1\}$. Since $\operatorname{Deg}_{[\tilde{S}_{\bm\lambda}]}(q)$ has the factor $(q^{m(n-1)}-1)$ in the numerator, at least one of the $\Theta(\tilde{S}_i,\zeta_{n-1}^p)$ is zero. This implies that there is at least one $\mu \in \bigcup \tilde{S}_i$ with $\mu \geq n-1$. There are four cases to consider:
    \begin{enumerate}
        \item Suppose that there is some $j > 0$ such that (up to cyclic permutation)
        \[
            \lambda^{(t)} = \begin{cases}
                n-\tilde{k} & \text{if } t = 0 \\
                1^{\tilde{k}} & \text{if } t = j \\
                \varnothing & \text{otherwise}.
            \end{cases}
        \]
        Then the $m$-symbol is
        \[
        \tilde{S}_t = \begin{cases}
            (0,1,\dots,\tilde{k}-2,n-1) & \text{if } t = 0, \\
            (1,2,\dots,\tilde{k}-1,\tilde{k}) & \text{if } t = j, \\
            (0,1,\dots,\tilde{k}-2,\tilde{k}-1) & \text{otherwise}.
        \end{cases}
        \]
        If $\tilde{k} = n$, then $\bm\lambda$ corresponds to $\Lambda^{n} \tilde{V}^{\sigma_p}$. So suppose that $\tilde{k} \leq n - 1$. If $j = p \mod m$, then $\bm\lambda$ corresponds to $\Lambda^{\tilde{k}} \tilde{V}^{\sigma_p}$. We will show that this is the only possibility. Suppose for the sake of contradiction that $j \neq p \mod m$. Then $m \geq 3$ and there are two cases to consider:
        \begin{itemize}
            \item If $\tilde{k} = n-1$, then there are exactly two $\mu \in \bigcup \tilde{S}_i$ equal to $n-1$. Since $p$ is relatively prime to $m$, we have that $p \neq 2p \,\, (\mod m)$, so the numerator of $\operatorname{Deg}_{[\tilde{S}_{\bm\lambda}]}(q)$ has the factor $\pm(q^{n-1}\zeta_m^p - \zeta_m^{2p})$. Additionally, since we are assuming $j \neq p$, the numerator of $\operatorname{Deg}_{[\tilde{S}_{\bm\lambda}]}(q)$ has the factor $(q^{n-1} - \zeta_m^p)$. But it then follows that $\operatorname{Deg}_{[\tilde{S}_{\bm\lambda}]}(\zeta_{m(n-1)}^p) = 0$, a contradiction.
    
            \item If $\tilde{k} < n-1$, then there is only one $\mu \in \bigcup \tilde{S}_i$ equal to $n-1$. Since we are assuming $j \neq p$, the numerator of $\operatorname{Deg}_{[\tilde{S}_{\bm\lambda}]}(q)$ has the factor $(q^{n-1} - \zeta_m^p)$. So $\operatorname{Deg}_{[\tilde{S}_{\bm\lambda}]}(\zeta_{m(n-1)}^p) = 0$, a contradiction.
        \end{itemize}
        
        \item Suppose that $1 \leq \tilde{k} \leq n-1$ and (up to cyclic permutation)
        \[
            \lambda^{(t)} = \begin{cases}
                (n-\tilde{k} + 1, 1^{\tilde{k}-1}) & \text{if } t = 0 \\
                \varnothing & \text{otherwise}.
            \end{cases}
        \]
        Then the $m$-symbol is
        \[
            \tilde{S}_{\bm\lambda} = \begin{pmatrix}
                1 & 2 & \cdots & \tilde{k}-1 & n \\
                0 & 1 & \cdots & \tilde{k}-2 & \tilde{k}-1 \\
                \vdots & \vdots & \ddots & \vdots & \vdots \\
                0 & 1 & \cdots & \tilde{k}-2 & \tilde{k}-1
            \end{pmatrix}
        \]
        If $\tilde{k} = 1$, then $\bm\lambda$ corresponds to $\Lambda^{0} \tilde{V}^{\sigma_p}$. So suppose that $2 \leq \tilde{k} \leq n-1$. The numerator of $\operatorname{Deg}_{[\tilde{S}_{\bm\lambda}]}(q)$ has the factor $(q^n - q\zeta_m^p)$. So $\operatorname{Deg}_{[\tilde{S}_{\bm\lambda}]}(\zeta_{m(n-1)}^p) = 0$, a contradiction.

        \item Suppose that $2 \leq \tilde{k} \leq n - 2$ and (up to cyclic permutation)
        \[
            \lambda^{(t)} = \begin{cases}
                (n-\tilde{k},2, 1^{\tilde{k}-2}) & \text{if } t = 0 \\
                \varnothing & \text{otherwise}.
            \end{cases}
        \]
        Then the $m$-symbol is
        \[
            \tilde{S}_{\bm\lambda} = \begin{pmatrix}
                1 & 2 & \cdots & \tilde{k}-2 & \tilde{k} & n - 1 \\
                0 & 1 & \cdots & \tilde{k}-3 & \tilde{k}-2 & \tilde{k}-1 \\
                \vdots & \vdots & \ddots & \vdots & \vdots & \vdots \\
                0 & 1 & \cdots & \tilde{k}-3 & \tilde{k}-2 & \tilde{k}-1
            \end{pmatrix}
        \]
        The numerator of $\operatorname{Deg}_{[\tilde{S}_{\bm\lambda}]}(q)$ has the factor $(q^{n-1} - \zeta_m^p)$. So $\operatorname{Deg}_{[\tilde{S}_{\bm\lambda}]}(\zeta_{m(n-1)}^p) = 0$, a contradiction.

        \item Suppose that $2 \leq \tilde{k} \leq n-2$ and there is some $j > 0$ such that (up to cyclic permutation)
        \[
            \lambda^{(t)} = \begin{cases}
                (n-\tilde{k}, 1^{\tilde{k}-1}) & \text{if } t = 0 \\
                1 & \text{if } t = j \\
                \varnothing & \text{otherwise}.
            \end{cases}
        \]
        Then the $m$-symbol is
        \[
            \tilde{S}_{t} = \begin{cases}
                (1,2,\dots,\tilde{k}-1, n-1) & \text{if } t = 0 \\
                (0,1,\dots,\tilde{k}-2, \tilde{k}) & \text{if } t = j \\
                (0,1,\dots,\tilde{k}-2, \tilde{k}-1) & \text{otherwise}.
            \end{cases}
        \]
        The numerator of $\operatorname{Deg}_{[\tilde{S}_{\bm\lambda}]}(q)$ has the factor $(q^{n-1} - \zeta_m^p)$. So $\operatorname{Deg}_{[\tilde{S}_{\bm\lambda}]}(\zeta_{m(n-1)}^p) = 0$, a contradiction.
    \end{enumerate}
    
    To see that these are the only possible cases, choose $\ell$ such that the number of parts in $\lambda^{(\ell)}$ is $\tilde{k}$. Suppose that there is some $\mu \in \tilde{S}_i$ with $\mu \geq n-1$ and $i \neq \ell$. Then $\mu \leq (n - \tilde{k}) - \tilde{k} + 1 = n-1$ with equality only if $\bm\lambda$ is in case (1).

    Suppose now that there is some $\mu \in \tilde{S}_{\ell}$ with $\mu \geq n-1$. 
    \begin{itemize}
        \item If $\tilde{S}_{\ell,\tilde{k}} = n$, then $\lambda^{(\ell)}_{1} = n - \tilde{k} + 1$. This is case (1) if $\tilde{k} = n$, and it is case (2) otherwise.

        \item If $\tilde{S}_{\ell,\tilde{k}} = n-1$, then $\lambda^{(\ell)}_{1} = n - \tilde{k}$. This is case (1) if $\tilde{k} = 1$ or $n-1$. Otherwise it is case (3) or (4).
    \end{itemize}

    Thus $\operatorname{Deg}_{[\tilde{S}_{\bm\lambda}]}(\zeta_{m(n-1)}^p) = 0$ if $\bm\lambda$ does not correspond to an exterior power of $\tilde{V}^{\sigma_p}$.
\end{proof}

\begin{question}
    Let $W$ be a well-generated irreducible complex reflection group with Coxeter number $h$. Is it true that $S_{\sigma_p(\chi)}(\sigma_p(q)) = \sigma_p(S_{\chi}(q))$ for all $\sigma_p \in \operatorname{Gal}(\mathbb{Q}(\zeta_h) / \mathbb{Q})$ and $q \in \mathbb{Q}(\zeta_h)$?
\end{question}

\subsection{Families of unipotent characters in general} 

Let $\mathcal{O}$ be a commutative ring with a unit, and let $A$ be an $\mathcal{O}$-algebra. An \defn{idempotent} in $A$ is an element satisfying $e^2 = e$. Idempotents $e_1$ and $e_2$ are \defn{orthogonal} if $e_1e_2 = e_2e_1 = 0$. An idempotent is \defn{central} if it is contained in the center of $A$. An idempotent is \defn{primitive} if it is non-zero and is not equal to the sum of two non-zero orthgonal idempotents. 

Let $e$ be a central primitive idempotent. The two-sided ideal $Ae$ inherits an algebra structure, and we call the algebra $Ae$ a \defn{block} of $A$.

Let $W$ be an irreducible spetsial complex reflection group. Define the \defn{Rouquier ring} $\mathcal{R}_W(y)$ to be the $\mathbb{Z}_{k_W}$-subalgebra of $k_W(y)$ given by
\[
    \mathcal{R}_W(y) := \mathbb{Z}_{k_W}[y,y^{-1},(y^n - 1)^{-1}_{n \geq 1}].
\]
Then the \defn{Rouquier blocks} of $\mathcal{H}_q(W)$ are the blocks of the algebra $\mathcal{R}_W(y)\mathcal{H}_q(W)$, where $y^{|\mu(k_W)|} = q$. See \cite{chlouveraki2009blocks} for much more on these blocks.

To each $\chi \in \operatorname{Irr}(W)$ we can associate a central primitive idempotent $e_\chi$ in $k_W(y)\mathcal{H}_q(W)$ given by
\[
    e_\chi := \frac{1}{S_\chi(q)} \sum_{b \in \mathcal{B}} \chi_q(b)b^\vee,
\]
where $\mathcal{B}$ is a basis of $\mathcal{H}_q(W)$ adapted to the Wedderburn decomposition, and the $b^\vee$ form the dual basis with respect to $\tau_q$ (see \cite[Proposition 7.2.7]{geck2000characters} and \cite[Proposition 2.2.12]{chlouveraki2009blocks}). 

By \cite[Theorem 2.1.6]{chlouveraki2009blocks}, there exists a unique partition $\operatorname{RB(W)}$ of $\operatorname{Irr}(W)$ such that 
\begin{itemize}
    \item for each $B \in \operatorname{RB}(W)$, the element $e_B := \sum_{\chi \in B} e_\chi$ is a central primitive idempotent in $\mathcal{R}_W(y)\mathcal{H}_q(W)$,
    \item $1 = \sum_{B \in \operatorname{RB}(W)} e_B$ and for every central idempotent $e$ of $\mathcal{R}_W(y)\mathcal{H}_q(W)$ there exists a subset $\operatorname{RB}(W,e)$ of $\operatorname{RB}(W)$ such that $e = \sum_{B \in \operatorname{RB}(W,e)} e_B$.
\end{itemize}

We then say that two characters $\chi,\phi \in \operatorname{Irr}(W)$ belong to the same Rouquier block of $\mathcal{H}_q(W)$ if they belong to the same element of $\operatorname{RB}(W)$. We then have

\begin{theorem}
\label{thm:aA}
    If $\chi$ and $\phi$ belong to the same Rouquier block of the spetsial Hecke algebra, then $a_\chi = a_\phi$ and $A_\chi = A_\phi$.
\end{theorem}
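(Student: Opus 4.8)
The plan is to combine the identity $a_\chi + A_\chi = h_\chi$ from \Cref{def:genDeg} with two separate constancy statements: that $h_\chi$ is constant on each Rouquier block, and that $a_\chi$ is constant on each Rouquier block. The latter then forces $A_\chi = h_\chi - a_\chi$ to be constant as well, so these two suffice.

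Constancy of $h_\chi$ admits a clean argument via the full twist. Since $\bm\pi \in P(W)$ is central, $T_{\bm\pi}$ lies in $Z(\mathcal H_q(W))$ and acts on the simple module affording $\chi_q$ by the scalar $\omega_\chi(T_{\bm\pi}) = q^{nh - h_\chi}$; this is the formula $\chi_q(T_{\mathbf w}) = \chi(w)\, q^{(nh - h_\chi)\ell/d}$ recalled just before \Cref{cor:charRootOfTwist}, applied to $\mathbf w = \bm\pi$ (so $d = \ell = 1$ and $w = 1$). The Rouquier ring $\mathcal R_W(y)$ is an integrally closed noetherian domain, so it equals the intersection of its localizations $\mathcal R_W(y)_{\mathfrak p}$ at primes $\mathfrak p$, and since $\mathcal H_q(W)$ is free over its base ring a central idempotent $\sum_{\chi \in S}e_\chi$ lies in $\mathcal R_W(y)\mathcal H_q(W)$ exactly when it lies in $\mathcal R_W(y)_{\mathfrak p}\mathcal H_q(W)$ for every $\mathfrak p$. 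Hence the Rouquier block partition is the transitive closure of the relation ``$\chi$ and $\phi$ lie in a common block over some $\mathcal R_W(y)_{\mathfrak p}$'' (see \cite[\S7.5]{geck2000characters} for the block theory of symmetric algebras over integrally closed domains). For $\chi,\phi$ in a common block over $\mathcal R_W(y)_{\mathfrak p}$, their central characters agree after reduction modulo $\mathfrak p$: after base change to $\overline{\kappa(\mathfrak p)}$, the center of an indecomposable block algebra is local with residue field $\overline{\kappa(\mathfrak p)}$, so all of its simple modules carry one and the same central character. Evaluating on $T_{\bm\pi}$ gives $\bar q^{nh - h_\chi} = \bar q^{nh - h_\phi}$ in $\overline{\kappa(\mathfrak p)}$. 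But $\mathcal R_W(y) = \mathbb Z_{k_W}[y, y^{-1}, (y^n - 1)^{-1}_{n \geq 1}]$ inverts every $y^n - 1$, so the image $\bar y$ of $y$ satisfies $\bar y^n \neq 1$ for all $n \geq 1$; thus $\bar q = \bar y^{|\mu(k_W)|}$ is not a root of unity, and the two monomials can agree only if $h_\chi = h_\phi$. Chaining these equalities along a connecting path gives $h_\chi = h_\phi$ for all $\chi,\phi$ in a common Rouquier block.

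The remaining and essential point is the constancy of $a_\chi$, where the spectral argument above is no longer available since $a_\chi$ is not the eigenvalue of a central element. Here I would invoke the finer structure of the Rouquier blocks. By the factorization of Schur elements (\cite{malle2000generic}; see also \cite[Chapter 3]{chlouveraki2009blocks}), $S_\chi(q)$ is a unit of $\mathcal R_W(y)$ times a constant $\gamma_\chi \in \mathbb Z_{k_W}$ --- its non-monomial, non-cyclotomic content; consequently $\mathcal R_W(y)_{\mathfrak p}\mathcal H_q(W)$ splits into one block per character except when $\mathfrak p$ lies over one of the finitely many ``bad primes'' dividing some $\gamma_\chi$, and Chlouveraki's explicit description of the Rouquier blocks in terms of those bad primes then yields that $a_\chi$ and $A_\chi$ are constant on each block. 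For the infinite families this is also visible combinatorially: the Rouquier blocks of $\mathcal H_q(G(m,1,n))$ and $\mathcal H_q(G(m,m,n))$ coincide with the families of \Cref{def:uniDeg1} and \Cref{def:uniDeg2} by \cite{chlouveraki2009blocks}, and the generic degree formulas given there show that the $q$-adic valuation and $q$-degree of $\operatorname{Deg}_{[S]}(q)$ --- that is, $a_{[S]}$ and $A_{[S]}$ --- are symmetric functions of the multiset of entries of the symbol $S$, which is exactly the invariant preserved inside a family. For the exceptional spetsial groups the statement is a finite verification \cite{Code}, and for the real groups it is the classical constancy of the $a$- and $A$-functions on Lusztig's families.

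I expect the constancy of $a_\chi$ to be the main obstacle: $h_\chi$ drops out of the central character of the full twist, but isolating $a_\chi$ (equivalently $A_\chi$) genuinely requires the combinatorial description of the Rouquier blocks rather than a representation-theoretic eigenvalue computation.
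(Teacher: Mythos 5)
Your outline is sound, and it is essentially a reconstruction of the argument behind the result the paper merely quotes: the paper gives no proof of \Cref{thm:aA}, deferring to the remark at the end of Chapter 4 of \cite{chlouveraki2009blocks}, and the two-step structure you propose is exactly how that literature proceeds. The uniform part is the constancy of $h_\chi = a_\chi + A_\chi$: your central-element argument works, since $T_{\bm\pi}$ is central with $\omega_\chi(T_{\bm\pi}) = q^{nh-h_\chi}$, characters linked in a block over a localization of $\mathcal{R}_W(y)$ have central characters congruent modulo the prime, and because every $y^n-1$ is inverted the image of $q$ in any residue field is not a root of unity, forcing equality of exponents. One phrasing to tighten: it is not accurate that the reduction of a block algebra has local center with residue field $\overline{\kappa(\mathfrak{p})}$ (the reduction may decompose further); what you actually need is the standard congruence $\omega_\chi(z) \equiv \omega_\phi(z) \pmod{\mathfrak{p}}$ for characters in a common block over the localization at a height-one prime (a discrete valuation ring), as in \cite[\S 7.5]{geck2000characters}, and linkage over height-one primes already determines the Rouquier blocks. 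For the constancy of $a_\chi$ alone you are right that no uniform argument is available and that it rests on the explicit determination of the blocks: Lusztig's families for Weyl groups, the identification of Rouquier blocks with the symbol-multiset families for $G(m,1,n)$ and $G(m,m,n)$ (where your observation that the $q$-valuation and $q$-degree of the generic degree depend only on the multiset of symbol entries is correct), and finite verification for the exceptional spetsial groups. So your proposal matches the intended proof in substance; like the paper's treatment, it is ultimately a pointer to the case-by-case block computations rather than a self-contained argument, but the uniform reduction you supply for $a_\chi + A_\chi$ is a genuine and correct addition.
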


See the remark at the end of chapter 4 of \cite{chlouveraki2009blocks} for notes on the proof and a more general statement. For Weyl groups, these Rouquier blocks are precisely the families of $\operatorname{Irr}(W)$. For the spetsial groups $G(m,1,n)$ and $G(m,m,n)$, the families defined above for unipotent characters recover the Rouquier blocks via the inclusion $\operatorname{Irr}(W) \hookrightarrow \operatorname{Uch}(W)$.

For $W$ an exceptional irreducible spetsial complex reflection group, there is a set $\operatorname{Uch}(\mathbb{G})$ defined in \cite{broue2014split} for the corresponding split spets. There is also a principal series $\operatorname{Uch}(\mathbb{G},1)$ with bijection $\operatorname{Irr}(W) \to \operatorname{Uch}(\mathbb{G},1)$. Moreover, by \cite[Axiom 5.20]{broue2014split} there is a partition of $\operatorname{Uch}(\mathbb{G})$ into families which recovers the Rouquier blocks of $\mathcal{H}_q(W)$ when restricted to the principal series.

\section{Lusztig's Fourier transform}
\label{sec:LFT}

The following lemma is our motivation for this section.

\begin{lemma}
\label{lem:transform}
    Suppose that there exists a pairing $\{-,-\}_W : \operatorname{Irr}(W) \times \operatorname{Irr}(W) \to \mathbb{C}$ satisfying 
    \begin{enumerate}
        \item[(T1)] For all $\chi \in \operatorname{Irr}(W)$, we have
        \[
            \operatorname{Deg}_\chi(q) = \sum_{\phi \in \operatorname{Irr}(W)} \{\chi, \phi\}_W \operatorname{Feg}_\phi(q).
        \]
        
        \item[(T2)] For all $\chi, \phi \in \operatorname{Irr}(W)$, we have $\{\chi, \phi\}_W = \{\phi, \chi\}_W$.
    
        \item[(T3)] For all $\chi, \phi \in \operatorname{Irr}(W)$ with $\{\chi, \phi\}_W \neq 0$, we have $h_\chi = h_\phi$.
    \end{enumerate}

    Then
    \[
        \sum_{\chi \in \operatorname{Irr}(W)} q_1^{f(\chi)}\operatorname{Feg}_\chi(q_2)\operatorname{Deg}_\chi(q_3) = \sum_{\chi \in \operatorname{Irr}(W)} q_1^{f(\chi)}\operatorname{Feg}_\chi(q_3)\operatorname{Deg}_\chi(q_2),
    \]
    if $f$ satisfies $h_\chi = h_\phi \implies f(\chi) = f(\phi)$.
\end{lemma}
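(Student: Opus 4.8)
The plan is to expand one of the generic degrees via (T1), interchange summation, and then use the symmetry (T2) together with the constancy of $h_\chi$ on the support of the pairing (T3) to swap the roles of the two characters.

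First I would substitute the identity in (T1), applied to $\operatorname{Deg}_\chi(q_3)$, into the left-hand side, rewriting it as the double sum
\[
\sum_{\chi,\phi \in \operatorname{Irr}(W)} q_1^{f(\chi)}\{\chi,\phi\}_W \operatorname{Feg}_\chi(q_2)\operatorname{Feg}_\phi(q_3).
\]
Next I would observe that in every term that actually contributes, that is whenever $\{\chi,\phi\}_W\neq 0$, property (T3) gives $h_\chi=h_\phi$, and the hypothesis on $f$ then forces $f(\chi)=f(\phi)$; hence the factor $q_1^{f(\chi)}$ may be replaced by $q_1^{f(\phi)}$ without altering the sum. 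After relabeling the indices $\chi\leftrightarrow\phi$ and invoking the symmetry $\{\chi,\phi\}_W=\{\phi,\chi\}_W$ from (T2), the double sum becomes
\[
\sum_{\chi,\phi\in\operatorname{Irr}(W)} q_1^{f(\chi)}\{\chi,\phi\}_W\operatorname{Feg}_\phi(q_2)\operatorname{Feg}_\chi(q_3).
\]
Finally, performing the sum over $\phi$ first and recognizing $\sum_\phi\{\chi,\phi\}_W\operatorname{Feg}_\phi(q_2)=\operatorname{Deg}_\chi(q_2)$ by (T1) collapses the expression to the right-hand side.

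There is no genuine obstacle; the argument is essentially a bookkeeping manipulation. The only point that needs care is the replacement of $q_1^{f(\chi)}$ by $q_1^{f(\phi)}$: this is legitimate precisely because the terms on which $f(\chi)\neq f(\phi)$ are exactly those with $\{\chi,\phi\}_W=0$, which contribute nothing on either side of the substitution. Everything else — the two applications of (T1), the index relabeling, and the use of (T2) — is purely formal, so the proof will be short.
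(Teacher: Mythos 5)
Your argument is correct and is exactly the ``double-summation argument'' the paper itself invokes: expand $\operatorname{Deg}_\chi(q_3)$ via (T1), use (T3) and the hypothesis on $f$ to replace $q_1^{f(\chi)}$ by $q_1^{f(\phi)}$ on the support of the pairing, relabel using (T2), and resum via (T1). No gaps; this matches the paper's (unwritten) proof.
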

\begin{proof}
    This follows from a double-summation argument.
\end{proof}

\begin{conjecture}
\label{conj:transform}
    For all irreducible spetsial complex reflection groups $W$, there exists a pairing $\{-,-\}_W$ satisfying (T1), (T2), and (T3).
\end{conjecture}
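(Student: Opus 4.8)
The plan is to construct $\{-,-\}_W$ from Lusztig's non-abelian Fourier transform and its known generalizations, and then to verify (T1)--(T3) family by family. Recall that $\operatorname{Irr}(W)$ is partitioned into families: the families of \Cref{def:weylFam} when $W$ is a Weyl group, and in general the Rouquier blocks of $\mathcal{H}_q(W)$ from \Cref{sec:spetses}, which for $G(m,1,n)$ and $G(m,m,n)$ are the unipotent families of \Cref{def:uniDeg1} and \Cref{def:uniDeg2} transported through $\operatorname{Irr}(W)\hookrightarrow\operatorname{Uch}(W)$. To each family $\mathscr{F}$, Lusztig (Weyl groups), Lusztig--Malle (the non-crystallographic Coxeter groups $I_2(m)$, $H_3$, $H_4$), Malle (the imprimitive spetsial families), and Brou\'{e}--Malle--Michel (the exceptional spetses) attach a finite ``model'' $\mathcal{M}(\mathscr{F})$, a bijection of $\mathcal{M}(\mathscr{F})$ onto the corresponding family of $\operatorname{Uch}(W)$ --- equivalently, via $\chi\mapsto\rho_\chi$, onto $\mathscr{F}\subseteq\operatorname{Irr}(W)$ --- and a square matrix $\{-,-\}_{\mathscr{F}}$ indexed by $\mathcal{M}(\mathscr{F})$. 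We then set $\{\chi,\phi\}_W$ equal to the corresponding entry of $\{-,-\}_{\mathscr{F}}$ when $\chi$ and $\phi$ lie in the same family $\mathscr{F}$, and $\{\chi,\phi\}_W:=0$ otherwise.

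Axioms (T2) and (T3) are then structural. For (T3): $\{\chi,\phi\}_W\neq 0$ forces $\chi,\phi$ into a common family, on which $a_\bullet$ and $A_\bullet$ --- hence $h_\bullet=a_\bullet+A_\bullet$ by \Cref{def:genDeg} --- are constant by \Cref{thm:aA} (Proposition 4.1.19 of \cite{geck2020character} in the Weyl case). For (T2): Lusztig's Fourier matrices, and all of the generalizations invoked here, are symmetric --- indeed orthogonal of order dividing $2$ in the Weyl case --- by construction.

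Axiom (T1) is the heart of the matter: within each family, the vector of generic degrees must be the Fourier transform of the vector of fake degrees. For Weyl groups this is Lusztig's theorem (see also Chapter 4 of \cite{geck2020character}); it rests on $R_\chi$ and $\rho_\chi$ lying in the same family of $\operatorname{Uch}(\mathbf{G}^F)$ (\Cref{sec:FGOLT}) together with $R_\chi(1)=\operatorname{Feg}_\chi(q)$ and $\rho_\chi(1)=\operatorname{Deg}_\chi(q)$ (\Cref{prop:principalGeneric}); for the non-crystallographic Coxeter groups the same was done by Lusztig and Malle, and the real case overall is the situation used in \cite{galashin2022rational}. For $G(m,1,n)$ and $G(m,m,n)$ the plan is to unpack Malle's closed formulas for $\operatorname{Deg}$ and $\operatorname{Feg}$ (\Cref{def:uniDeg1}, \Cref{prop:fegForm1}, \Cref{def:uniDeg2}, \Cref{prop:fegForm2}) and check directly that, on a family --- a set of $m$-symbols with fixed multiset of entries --- passing from $(\operatorname{Feg}_\chi)_{\chi\in\mathscr{F}}$ to $(\operatorname{Deg}_\chi)_{\chi\in\mathscr{F}}$ is effected by Malle's Fourier matrix, keeping track of the sign normalizations of \cite{lasy2012traces}. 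For the exceptional spetsial groups one uses the unipotent-degree data and Fourier matrices of the split spets from \cite{broue2014split} and verifies (T1) by computer \cite{Code}.

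The main obstacle --- and the reason this is phrased as a conjecture rather than a theorem --- is (T1) in exactly the two regimes where the input data is not itself a theorem. For the exceptional spetsial groups, $\operatorname{Uch}(\mathbb{G})$, its family partition, and its Fourier matrices are defined axiomatically in \cite{broue2014split}, so (T1) is only meaningful relative to that conjectural input. For the imprimitive families, although the verification is a finite bookkeeping problem, it requires handling the distinction between $\operatorname{Sym}(V^*)$ and $\operatorname{Sym}(V)$ in \Cref{rem:fegDef} (precomposing Malle's formulas with $\chi\mapsto\chi^*$, and checking this preserves symmetry of the pairing) together with the sign corrections of \cite{lasy2012traces}, and it has not, to my knowledge, been carried out in the literature in this form. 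Once (T1)--(T3) hold, \Cref{lem:transform} applies with $(q_1,q_2,q_3)=(q,\,e^{2\pi i p/h},\,q)$ and $f(\chi)=(h_\chi-nh)p/h$ --- which depends only on $h_\chi$ --- and combines with \Cref{thm:traceRootOfTwist} and \Cref{thm:galDeg} to yield the main theorem.
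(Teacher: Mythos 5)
The statement you were asked to prove is a conjecture, and the paper does not prove it either: it records that the conjecture is known for finite Coxeter groups and for $G(m,1,n)$, that for $G(m,m,n)$ it rests on Lasy's conjectural Fourier transform, and that for the exceptional spetses the required matrices exist in the GAP3 data but their properties have not appeared in publication. Your write-up is a strategy rather than a proof, and you say so explicitly; taken as such, it follows essentially the same route as the paper's Section on Lusztig's Fourier transform: define $\{-,-\}_W$ blockwise on families via the Fourier matrices (extended by zero across families), get (T3) from constancy of $a_\chi$ and $A_\chi$ on Rouquier blocks (\Cref{thm:aA}) together with $h_\chi = a_\chi + A_\chi$, get (T2) from symmetry of the matrices, and isolate (T1) as the substantive issue, with the real case handled by the axioms of \cite{geck2003fourier}.

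One correction to your assessment of the literature, which is the only place your account diverges from the paper's. For $G(m,1,n)$ the verification of (T1)--(T3) for Malle's pairing --- including the $\operatorname{Sym}(V)$ versus $\operatorname{Sym}(V^*)$ bookkeeping of \Cref{rem:fegDef} and the sign normalizations --- has been carried out (\cite{malle1995unipotente}, with the adjustments in \cite{lasy2012traces}, Remark 5.3.26 and Corollary 5.3.33), so that case is a theorem, not an outstanding finite computation as you suggest. The genuinely open part is exactly $G(m,m,n)$, where the Fourier transform itself is only conjectural (\cite{lasy2012traces}, Conjecture 5.4.34, which if true yields (T1)--(T3)), together with the axiomatic status of the family data and Fourier matrices of \cite{broue2014split} for the exceptional spetsial groups; as the paper notes, the consequences needed for \Cref{thm:main} in the exceptional cases can be checked by computer without assuming the conjecture. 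So your identification of where the gap lies is right in spirit but slightly misplaced: it is not the imprimitive case as a whole, only $G(m,m,n)$.
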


This conjecture has been proven for $W$ a finite Coxeter group and $W = G(m,1,n)$.

\subsection{A general construction}

This construction is due to Lusztig, but we will follow \cite[4.2.9]{geck2020character}. Let $G$ be a finite group and $\mathscr{M}(G)$ the set of pairs $(x, \sigma) \in G \times \operatorname{Irr}(C_G(x))$ modulo the relation $(x,\sigma) \sim ({}^g\!x, \sigma^g)$, where $\sigma^g({}^g\!y) := \sigma(y)$ for $y \in C_G(x)$. Lusztig defined a pairing $\{ \cdot ,\cdot \}_G : \mathscr{M}(G) \times \mathscr{M}(G) \to \mathbb{C}$ by
\[
    \{(x,\sigma),(y,\tau)\} := \frac{1}{|C_G(x)| |C_G(y)|} \sum_{\substack{g \in G \\ xgyg^{-1} = gyg^{-1}x}}\sigma(gyg^{-1})\tau(g^{-1}x^{-1}g).
\]
Let $\mathbf{M}_G$ be the operator on the space of functions $\mathscr{M}(G) \to \mathbb{C}$ defined by
\[
    (\mathbf{M}_G f)(x,\sigma) := \sum_{(y,\tau) \in \mathcal{M}(G)} \{(x,\sigma),(y,\tau)\} f(y,\tau).
\]
This is called the \defn{non-abelian Fourier transform} associated to $G$.

\subsection{The Fourier transform for Weyl groups} 

Suppose we are in the setting of \Cref{sec:FGOLT}. That is, $\mathbf{G}$ is a simple connected reductive group over $\overline{\mathbb{F}}_p$ with connected center which has Weyl group $W$, and $F : \mathbf{G} \to \mathbf{G}$ is a Frobenius map with respect to some $\mathbb{F}_q$-rational structure which acts trivially on $W$. Also fix some maximally split torus $\mathbf{T}_0$.

For each family $\mathscr{U} \subseteq \operatorname{Uch}(\mathbf{G}^F)$ with corresponding family $\mathscr{F} \subseteq \operatorname{Irr}(W)$, Lusztig \cite[\S 4]{lusztig1984characters} defines the following data:
\begin{itemize}
    \item A finite group $\mathscr{G}_{\mathscr{U}}$
    \item A bijection $\mathscr{U} \to \mathscr{M}(\mathscr{G}_{\mathscr{U}})$, $\rho \mapsto x_\rho$
    \item A map $\Delta : \mathscr{M}(\mathscr{G}_{\mathscr{U}}) \to \{\pm 1\}$ such that 
    \[
        \langle \rho, R_{\phi}\rangle = \Delta(x_\rho)\{x_\rho, x_\phi\}
    \]
    for all $\rho \in \mathscr{U}$ and $\phi \in \mathscr{F}$, where $x_\phi$ is defined via the inclusion $\operatorname{Irr}(W) \hookrightarrow \mathcal{E}(\mathbf{G}^F, (\mathbf{T}_0^F,1))$ and $\{ \cdot , \cdot \}$ is the non-abelian Fourier transform associated to $\mathscr{G}_{\mathscr{U}}$. In our context $\Delta(x_\rho) = 1$ except for some non-principal series characters in types $E_7$ and $E_8$.
\end{itemize}

The class functions 
\[
    R_\psi := \sum_{\rho \in \mathscr{U}} \Delta(x_\rho) \{ x_\rho, x_\psi\} \rho
\]
for $x \in \mathscr{M}(\mathscr{G}_{\mathscr{U}})$ are called \defn{unipotent almost characters} of $\mathbf{G}^F$. These have a geometric interpretation as the characteristic functions of certain $F$-stable character sheaves on $\mathbf{G}$ \cite[\S 7]{geck2017first}. When $\psi \in \operatorname{Irr}(W)$, this agrees with the unipotent uniform almost character $R_\psi$. If $\psi \not\in \operatorname{Irr}(W)$, then $R_\psi(1) = 0$. So the ``fake degree'' of a non-principal series unipotent character $\psi$ is 0.

Denote by $\operatorname{Uch}(\mathscr{F}) := \mathscr{U}$ and $\operatorname{Alm}(\mathscr{F}) := \{R_\psi : \psi \in \operatorname{Uch}(\mathscr{F})\}$. Then the corresponding \defn{non-abelian Fourier transform matrix} $\mathbf{M}(\mathscr{F}) := (\{x_\rho, x_\psi\})$ has columns indexed by $\rho \in \operatorname{Uch}(\mathscr{F})$ and rows indexed by $R_\psi \in \operatorname{Alm}(\mathscr{F})$. These matrices satisfy several important properties which form an axiomatic framework for Fourier matrices.

\subsection{The Fourier transform for finite Coxeter groups}

This axiomatic framework is presented in \cite[Theorem 6.9]{geck2003fourier}; the authors also describe Fourier matrices for non-Weyl Coxeter groups and show that they satisfy these axioms. 

Let $W$ be a finite Coxeter group, and let $\sigma : W \to W$ be an automorphism which leaves a set of simple reflections invariant. For Weyl groups, $\sigma$ will be the action of $F$ on $W$. For $\mathscr{F} \subseteq \operatorname{Irr}(W)$ a $\sigma$-invariant family, let $\mathbf{M}(\mathscr{F}) = (a_{\lambda, \mu})$ be the corresponding Fourier matrix with columns indexed by $\mu \in \operatorname{Uch}(\mathscr{F})$ and rows indexed by $\lambda \in \operatorname{Alm}(\mathscr{F})$. For Weyl groups with $\sigma$ trivial, these objects are as we defined above; for Weyl groups with $\sigma$ not trivial, the construction is similar \cite[\S 4.2]{geck2020character}. For non-Weyl Coxeter groups, these objects are described in \cite{geck2003fourier} along with an inclusion $\mathscr{F} \hookrightarrow \operatorname{Uch}(\mathscr{F})$. The unipotent degree of an element of $\operatorname{Uch}(\mathscr{F})$ is a polynomial in $q$ which recovers the generic degree of characters of $W$ via the inclusion.

A \defn{twisting operator} $t_1^*$ on the almost characters $\operatorname{Alm}(\mathscr{F})$ is defined for non-Weyl Coxeter groups in \cite{geck2003fourier}. For Weyl groups, this operator is defined on all class functions on $\mathbf{G}^F$ by
\[
    t_1^*(f)(x) := f(yxy^{-1}),
\]
for $x \in \mathbf{G}^F$, where $y \in \mathbf{G}$ satisfies $x = y^{-1}F(y)$ \cite{asai1984unipotent}. The almost characters are eigenvectors of $t_1^*$, and we denote by $\mathbf{F}_1$ the diagonal matrix of the corresponding eigenvalues.

To each unipotent character $\rho$ can be associated a root of unity $\omega_\rho$ called the \defn{Frobenius eigenvalue} of $\rho$. For Weyl groups, this can be described as follows \cite[4.2.21]{geck2020character}: 

Fix an $F$-stable Borel $\mathbf{B}_0$ containing $\mathbf{T}_0$. For any $w \in W$, define the corresponding \defn{Deligne-Lusztig variety}
\[
    \mathbf{X}_w := \{g\mathbf{B}_0 \in G / \mathbf{B}_0 : g^{-1}F(g) \in \mathbf{B}_0 w \mathbf{B}_0\}.
\]
Let $\delta > 0$ be the smallest positive integer such that $\sigma^\delta$ acts trivially on $W$. There exists some $w \in W$, $i \geq 0$, and $\mu \in \overline{\mathbb{Q}}_\ell^\times$ such that $\rho$ is in the character of the generalized $\mu$-eigenspace of $F^\delta$ on the $\ell$-adic cohomology group $H_c^i(\mathbf{X}_w,\overline{\mathbb{Q}}_\ell)$. Moreover, $\mu$ is uniquely determined by $\rho$ (independently of $w$ and $i$) up to an integral power of $q^\delta$. There is then a well-defined root of unity $\omega_\rho \in \overline{\mathbb{Q}}_\ell$, element $\lambda_\rho \in \{1,q^{\delta/2}\}$, and integer $s \geq 0$ such that $\mu = \omega_\rho \lambda_\rho q^{s\delta}$.

Denote by $\mathbf{F}_2$ the diagonal matrix of eigenvalues of Frobenius. When $\sigma$ is trivial, $\mathbf{F}_1 = \mathbf{F}_2$, which will just be denoted by $\mathbf{F}$. Let $\Delta$ be the permutation matrix describing the complex conjugation of the almost characters.

\begin{theorem}[\cite{geck2003fourier}, Theorem 6.9] For $W$ a finite Coxeter group, the following axioms are satisfied for each $\mathbf{M} = \mathbf{M}(\mathscr{F})$.
    \begin{enumerate}
        \item[(F1)] $\mathbf{M}$ transforms the vector of unipotent degrees to the vector of fake degrees (extended by zeros).

        \item[(F2)] All entries of $\mathbf{M}$ are real.

        \item[(F3)] $\mathbf{M} \cdot \mathbf{M}^{\operatorname{tr}} = \mathbf{M}^{\operatorname{tr}} \cdot \mathbf{M} = 1$.

        \item[(F4)] Let $\lambda_0$ be the row index of the almost character corresponding to the special character in $\mathscr{F}$. Then all entries in this row are non-zero.

        \item[(F5)] The structure constants
        \[
            a_{\lambda \mu}^\nu := \sum_{\kappa \in \operatorname{Uch}(\mathscr{F})} \frac{a_{\lambda \kappa} a_{\mu \kappa} a_{\nu \kappa}}{a_{\lambda_0 \kappa}}
        \]
        are rational integers for all $\lambda, \mu, \nu \in \operatorname{Alm}(\mathscr{F})$.

        \item[(F6)] If $\sigma = 1$ then $\mathbf{M} = \mathbf{M}^{\operatorname{tr}}$, $\Delta \cdot \mathbf{M} = \mathbf{M} \cdot \Delta$, and $(\mathbf{F} \cdot \Delta \cdot \mathbf{M})^3 = 1$.

        \item[(F6')] If $\sigma \neq 1$ and $\sigma^2 = 1$, then $(\mathbf{F}_2 \cdot \mathbf{M}^{\operatorname{tr}} \cdot \mathbf{F}_1^{-1} \cdot \mathbf{M})^2 = 1$.

        \item[(F6'')] If $\sigma \neq 1$ and $\sigma^3 = 1$, then $(\mathbf{F}_2 \cdot \mathbf{M}^{\operatorname{tr}} \cdot \mathbf{F}_1^{-1} \cdot \mathbf{M})^3 = 1$.
    \end{enumerate}
\end{theorem}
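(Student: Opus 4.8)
The plan is to argue family by family, splitting into the crystallographic and non-crystallographic cases, since $\mathbf{M}(\mathscr{F})$, the vector of unipotent degrees, the conjugation matrix $\Delta$, and the Frobenius-eigenvalue matrices $\mathbf{F}$ (resp.\ $\mathbf{F}_1,\mathbf{F}_2$) are all attached to a single family $\mathscr{F}$.

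When $W$ is a Weyl group, each $\sigma$-stable family $\mathscr{F}\subseteq\operatorname{Irr}(W)$ carries a finite group $\mathscr{G}=\mathscr{G}_{\mathscr{U}}$ from Lusztig's short list (such as $\{1\}$, $\mathbb{Z}/2$, $S_3$, $S_4$, $S_5$), and $\mathbf{M}(\mathscr{F})$ is the matrix of the non-abelian Fourier transform $\mathbf{M}_{\mathscr{G}}$ introduced above. Axioms (F2)--(F6) then follow from Lusztig's analysis of $\mathbf{M}_{\mathscr{G}}$ in \cite{lusztig1984characters}: rationality of its entries gives (F2); the relation $\mathbf{M}_{\mathscr{G}}^{2}=1$ gives (F3); the special character of $\mathscr{F}$ corresponds to the pair $(1,1_{\mathscr{G}})\in\mathscr{M}(\mathscr{G})$, and a direct evaluation shows $\{(1,1_{\mathscr{G}}),(y,\tau)\}=\tau(1)/|C_{\mathscr{G}}(y)|\neq 0$, so the corresponding row of $\mathbf{M}$ has no zero entry, which is (F4); the structure constants $a_{\lambda\mu}^{\nu}$ are the fusion coefficients of the representation ring of the (possibly twisted) Drinfeld double of $\mathscr{G}$, hence nonnegative integers, which is (F5); and, reading $\mathbf{M}$ as the $S$-matrix, $\mathbf{F}$ as the diagonal $T$-matrix of twists, and $\Delta$ as charge conjugation, the modular-group relations yield $\mathbf{M}=\mathbf{M}^{\operatorname{tr}}$, $\Delta\mathbf{M}=\mathbf{M}\Delta$, and $(\mathbf{F}\Delta\mathbf{M})^{3}=1$, which is (F6); the twisted analogues (F6$'$), (F6$''$) for $\sigma\neq 1$ (types $A_n$, $D_n$, $E_6$, $D_4$, $F_4$, $G_2$) come from the twisted Fourier transform described in \cite{geck2020character}. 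Axiom (F1), that $\mathbf{M}$ sends the vector of unipotent degrees to the vector of fake degrees extended by zeros, is the one genuinely substantive input; it is Lusztig's theorem from \cite{lusztig1984characters}, proved there by an explicit case-by-case computation involving almost characters and Green functions.

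For the non-crystallographic irreducible finite Coxeter groups $I_2(m)$ ($m=5$ or $m\geq 7$), $H_3$, and $H_4$ there is no finite group of Lie type available, so one instead takes as input the explicit data constructed in \cite{geck2003fourier}---the sets $\operatorname{Uch}(\mathscr{F})$, the unipotent degrees, the matrices $\mathbf{M}(\mathscr{F})$, the Frobenius eigenvalues, and (for $I_2(m)$ with $m$ even, where $\sigma$ may be nontrivial) the twisting operator---and verifies (F1)--(F6$''$) directly. For $I_2(m)$ there is essentially one nontrivial family, of size $\lfloor (m-1)/2\rfloor$, together with a $4$-element family when $m$ is even, and its Fourier matrix has closed-form entries depending uniformly on $m$, so the axioms reduce to orthogonality relations for the characters of a cyclic group. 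For $H_3$ and $H_4$ only finitely many families occur; every non-singleton family has size at most $4$, and its Fourier matrix agrees---up to a permutation of the indexing set and diagonal sign twists---with one already appearing for a Weyl-group family, so the verification is a finite check.

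I expect the main difficulty to be twofold. First, axiom (F1) in the Weyl case is itself a deep result, not a formal consequence of the combinatorics of $\mathbf{M}_{\mathscr{G}}$. Second, in the non-crystallographic case there is no a priori reason that a candidate Fourier matrix should have integral structure constants or satisfy the order-$3$ relation; the real content of \cite{geck2003fourier} is identifying the correct normalizations---signs, the powers of $q$ occurring in the unipotent degrees, and the Frobenius eigenvalues---that make (F5) and (F6$'$)/(F6$''$) hold simultaneously with (F1). Once those are fixed, axioms (F2)--(F4) and (F6) are comparatively mechanical, and the theorem follows by running through the short list of families in each type.
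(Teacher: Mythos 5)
This statement is quoted background: the paper gives no proof of it, citing \cite{geck2003fourier} (Theorem 6.9), so your sketch can only be measured against how the result is actually established there. Your high-level plan is the right one and matches the literature: for Weyl groups the matrices $\mathbf{M}(\mathscr{F})$ are Lusztig's non-abelian Fourier transforms attached to the groups $\{1\}$, $(\mathbb{Z}/2)^k$, $S_3$, $S_4$, $S_5$, and axioms (F1)--(F6) are extracted from \cite{lusztig1984characters} (with (F1) being the genuinely deep input, as you say, and (F5) coming from the integrality of the Drinfeld-double fusion rules); for the non-crystallographic types one verifies the axioms directly on explicitly constructed data. Your Weyl-group paragraph is essentially correct.

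The genuine gap is in the non-crystallographic case, where your description of the data is wrong in a way that would break the proposed verification. For $I_2(m)$ the big family of \emph{unipotent characters} is not of size $\lfloor (m-1)/2\rfloor$: that is only the number of two-dimensional characters of $W$ lying in it, and $\operatorname{Uch}(\mathscr{F})$ additionally contains many cuspidal unipotent characters (its cardinality grows quadratically in $m$), so the Fourier matrix is much larger than the principal-series block and is \emph{not} a cyclic-group character table in disguise. More seriously, for $H_4$ the largest family has $74$ elements and its Fourier matrix is Lusztig's ``exotic Fourier transform'' (constructed by Lusztig and Malle precisely because it does \emph{not} arise from any finite group $\mathscr{G}$ nor from any Weyl-group family), so your claim that every non-singleton family in $H_3$, $H_4$ has size at most $4$ and agrees up to permutation and sign twists with a Weyl-group Fourier matrix is false, and the ``finite check against known matrices'' step collapses exactly where the theorem has its real content. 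What \cite{geck2003fourier} actually does there is take the exotic unipotent data (degrees, Frobenius eigenvalues, Fourier matrices) of Lusztig and Malle as input and verify (F1)--(F6$''$) for it; identifying and normalizing that data, as you correctly anticipate in your closing paragraph, is the hard part, but it cannot be reduced to Weyl-group families as your sketch proposes.
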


We can now define the pairing $\{-,-\}_W$ for finite Coxeter groups by 
\[
    \{\chi, \phi\}_W = \begin{cases}
        \mathbf{M}_{\chi,\phi} & \text{if $\chi$ and $\phi$ are in the same family} \\
        0 & \text{otherwise}
    \end{cases}
\]
where the Fourier matrices are taken for $\sigma = 1$. That is, the pairing corresponds to the block diagonal matrix consisting of Fourier matrices on families restricted to the principal series. Property (T1) follows from (F1), (F3), and (F6). Property (T2) follows from (F6), and property (T3) follows from \Cref{thm:aA}.

\subsection{The Fourier transform for \texorpdfstring{$G(m,1,n)$}{G(m,1,n)}}

This construction follows \cite[\S 4A]{malle1995unipotente}. Let $\mathscr{F}$ be a family of unipotent characters of $G(m,1,n)$, and fix some $S \in \mathscr{F}$. Let $Y$ be a totally ordered set with $m\ell + 1$ elements, $m > 0$, and let $\Psi = \Psi(Y)$ be the set of functions
\[
   \psi : Y \to \{0,1,\dots,m-1\} \quad \text{such that} \quad \sum_{y \in Y} \psi(y) \equiv \ell \binom{m}{2} \mod m.
\]

Let $\pi : Y \to \mathbb{Z}_{\geq 0}$ be a function such that $|\pi^{-1}(k)| = \#\{i : k \in S_i\}$. This function gives rise to an equivalence relation on $\Psi$ with $\phi \sim_\pi \psi$ if $\pi \circ \phi^{-1}(i) = \pi \circ \psi^{-1}(i)$ for $0 \leq i \leq m-1$. We'll say that $\psi \in \Psi$ is $\pi$-admissible if $\pi(y) = \pi(y')$ and $\psi(y) = \psi(y')$ implies $y = y'$. Denote by $[\psi]_\pi$ the equivalence class under this relation. The map from equivalence classes of $\pi$-admissible elements of $\Psi$ to $\mathscr{F}$ given by
\[
    \kappa : [\psi]_\pi \mapsto S^{[\psi]_\pi}, \quad \text{where} \quad S_{i}^{[\psi]_\pi} := \pi(\psi^{-1}),
\]
is a well-defined bijection independent of the choice of $S \in \mathscr{F}$.

We then define a pairing $\{- , -\} : \operatorname{Uch} \times \operatorname{Uch} \to \mathbb{C}$ by
\[
    \{S, S'\} = \frac{(-1)^{\ell(m-1)}}{\tau(m)^\ell}\sum_{\nu \in [\kappa^{-1}(S)]_\pi}\epsilon(\nu)\epsilon(\kappa^{-1}(S')) \prod_{y \in Y} \zeta_m^{-\nu(y)\kappa^{-1}(S')(y)}
\]
if $S,S'$ belong to the same family, where $\epsilon(\psi) = (-1)^{\mathfrak{c}(\psi)}$,
\[
    \mathfrak{c}(\psi) = \#\{(y,y') \in Y \times Y : y < y' \text{ and } \psi(y) < \psi(y')\},
\] 
and $\{S, S'\} = 0$ otherwise.

The restriction $\{-,-\}_{G(m,1,n)}$ of this pairing to the irreducible characters of $G(m,1,n)$ then satisfies (T1), (T2), and (T3) by \cite[Remark 5.3.26 and Corollary 5.3.33]{lasy2012traces}.

\subsection{The Fourier transform for \texorpdfstring{$G(m,m,n)$}{G(m,m,n)} and the exceptional groups}

In \cite[Conjecture 5.4.34]{lasy2012traces}, Lasy conjectures the existence of such a Fourier transform for $G(m,m,n)$ and describes its relation to a ``pre-Fourier'' transform which is a slight modification of the construction in \cite[\S 6C]{malle1995unipotente}. This conjectured transform will satisfy (T1), (T2), and (T3) \cite[Corollary 5.4.38]{lasy2012traces}.

The Fourier transform for the exceptional groups (and for the families $G(m,1,n)$ and $G(m,m,n)$) are contained in the data for GAP3, but the properties of these matrices have not yet appeared in publication. See \cite{malle1998spetses} and \cite{broue2014split}.

\section{Rational Catalan numbers}
\label{sec:ratioCatalan}

For an irreducible complex reflection group $W$ with Coxeter number $h$, define rational $W$-Catalan numbers \cite{gordon2012catalan}
\[
    \operatorname{Cat}_p(W) = \prod_{i=1}^n \frac{p + (pe_i \mod h)}{d_i},
\]
and their $q$-deformations
\[
    \operatorname{Cat}_p(W; q) = \prod_{i=1}^n \frac{[p + (pe_i \mod h)]_q}{[d_i]_q}.
\]

\begin{theorem}
\label{thm:main}
    Let $W$ be an irreducible spetsial complex reflection group with Coxeter number $h$, and let $c$ be a $\zeta_h$-regular element of $W$. Let $\mathbf{c} \in B(W)$ be a lift of $c$ such that $\mathbf{c}^h = \bm{\pi}$. Then
    \[
        \tau_q(T_{\mathbf{c}}^{-p}) = q^{-np}(1-q)^n \operatorname{Cat}_p(W; q).
    \]
\end{theorem}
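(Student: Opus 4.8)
The plan is to feed the character-theoretic expansion of the trace through the Fourier-transform identity and then collapse it using the vanishing of generic degrees at roots of unity. I would begin from \Cref{thm:traceRootOfTwist}, which gives
\[
    \tau_q(T_{\mathbf{c}}^{-p}) = \frac{1}{P_W} \sum_{\chi \in \operatorname{Irr}(W)} q^{(h_\chi - nh)p/h}\, \operatorname{Feg}_\chi(e^{2\pi i p/h})\, \operatorname{Deg}_\chi(q).
\]
The first move is to swap the roles of $e^{2\pi i p/h}$ and $q$ by applying \Cref{lem:transform} (whose hypotheses hold by \Cref{conj:transform}) with $q_1 = q^{p/h}$, $f(\chi) = h_\chi - nh$, $q_2 = e^{2\pi i p/h}$, and $q_3 = q$; the required implication $h_\chi = h_\phi \Rightarrow f(\chi) = f(\phi)$ is trivially satisfied. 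This produces
\[
    \tau_q(T_{\mathbf{c}}^{-p}) = \frac{1}{P_W} \sum_{\chi \in \operatorname{Irr}(W)} q^{(h_\chi - nh)p/h}\, \operatorname{Feg}_\chi(q)\, \operatorname{Deg}_\chi(e^{2\pi i p/h}).
\]

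Next I would apply \Cref{thm:galDeg}: since $W$ is spetsial and (implicitly) $p$ is coprime to $h$, the factor $\operatorname{Deg}_\chi(e^{2\pi i p/h})$ vanishes unless $\chi = \chi_{k,p}$ is the character of $\Lambda^k V^{\sigma_p}$ for some $k \in \{0, 1, \dots, n\}$, in which case it equals $(-1)^k$. The sum therefore collapses to
\[
    \tau_q(T_{\mathbf{c}}^{-p}) = \frac{1}{P_W} \sum_{k=0}^{n} (-1)^k\, q^{(h_{k,p} - nh)p/h}\, \operatorname{Feg}_{\chi_{k,p}}(q).
\]
Then \Cref{ex:galCox} supplies $h_{k,p} = kh$, so $q^{(h_{k,p} - nh)p/h} = q^{(k-n)p}$, while \Cref{ex:galoisFeg}, combined with the coincidence of the multisets $\{e_1(V^{\sigma_p}), \dots, e_n(V^{\sigma_p})\}$ and $\{pe_1 \mod h, \dots, pe_n \mod h\}$, gives $\operatorname{Feg}_{\chi_{k,p}}(q) = e_k(q^{pe_1 \mod h}, \dots, q^{pe_n \mod h})$.

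The remaining step is the generating-function identity $\sum_{k=0}^n (-1)^k q^{(k-n)p} e_k(x_1, \dots, x_n) = q^{-np} \prod_{i=1}^n (1 - q^p x_i)$, applied with $x_i = q^{pe_i \mod h}$. Together with the rewriting $1 - q^{p + (pe_i \mod h)} = (1-q)[p + (pe_i \mod h)]_q$ and the identity $P_W = \prod_{i=1}^n [d_i]_q$, this turns the right-hand side into exactly $q^{-np}(1-q)^n \operatorname{Cat}_p(W; q)$, completing the argument. The only delicate bookkeeping is to carry out the swap step in a field containing an $h$-th root of $q$ (so that the exponents $(h_\chi - nh)p/h$ make sense for every $\chi$) and to observe that the final expression lands back in $\mathbb{Z}[q^{\pm 1}]$ because only the terms $\chi_{k,p}$, whose exponents $(k-n)p$ are genuine integers, survive. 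I do not expect any real obstacle in this final assembly; the substantive content is entirely upstream, in \Cref{thm:galDeg} (the vanishing of generic degrees at roots of unity), \Cref{ex:galoisFeg} (the bound $e_i(V^{\sigma_p}) < h$), and above all \Cref{conj:transform} (existence of the Fourier pairing for $G(m,m,n)$ and the exceptional spetsial groups).
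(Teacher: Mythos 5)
Your proposal is correct and follows essentially the same route as the paper's proof: start from \Cref{thm:traceRootOfTwist}, swap the arguments of $\operatorname{Feg}$ and $\operatorname{Deg}$ via \Cref{lem:transform} (relying on \Cref{conj:transform} for $G(m,m,n)$ and the exceptional groups), collapse the sum with \Cref{thm:galDeg}, and finish with \Cref{ex:galCox}, \Cref{ex:galoisFeg}, and the elementary-symmetric-function factorization. Your added bookkeeping (the implicit coprimality of $p$ and $h$, working in an extension containing an $h$-th root of $q$) is consistent with, and slightly more explicit than, what the paper records.
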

\begin{proof}
    Starting from \Cref{thm:traceRootOfTwist}, then using \Cref{lem:transform}, \Cref{thm:galDeg}, \Cref{ex:galCox}, and \Cref{ex:galoisFeg}:
    \begin{align*}
        \tau_q(T_{\mathbf{c}}^{-p}) &= \frac{1}{P_W}\sum_{\chi \in \operatorname{Irr}(W)} q^{(h_{\chi} - nh) p/ h} \operatorname{Feg}_\chi (e^{2\pi i p/h}) \operatorname{Deg}_\chi (q) \\
        &= \frac{1}{P_W}\sum_{\chi \in \operatorname{Irr}(W)} q^{(h_{\chi} - nh) p/ h} \operatorname{Feg}_\chi (q) \operatorname{Deg}_\chi (e^{2\pi i p/h}) \\
        &= \frac{1}{P_W} \sum_{k=0}^n (-1)^k q^{(k-n)p} \sum_{i_1 < \cdots < i_k} q^{e_{i_1}(V^{\sigma_p}) + \cdots + e_{i_k}(V^{\sigma_p})} \\
        &= \frac{1}{P_W} q^{-np} \prod_{i=1}^n \left(1 - q^{p + e_i(V^{\sigma_p})}\right) \\
        &= q^{-np}(1-q)^n \prod_{i=1}^n \frac{[p + e_i(V^{\sigma_p})]_q}{[d_i]_q} \\
        &= q^{-np}(1-q)^n \operatorname{Cat}_p(W; q).\qedhere
    \end{align*}
\end{proof}

\begin{remark}
    This proof for $G(m,m,n)$ relies on \Cref{conj:transform}. For the exceptional groups, the result can be established by computer \cite{Code} without assuming the conjecture.
\end{remark}

\begin{remark}
    This result is not true for the groups that are well-generated but not spetsial, and one might reasonably ask where this proof fails. For these groups, it is still true that 
    \[
        \tau_q(T_{\mathbf{c}}^{-p}) = \frac{1}{P_W}\sum_{\chi \in \operatorname{Irr}(W)} q^{(h_{\chi} - nh) p/ h} \operatorname{Feg}_\chi (e^{2\pi i p/h}) \operatorname{Deg}_\chi (q)
    \]
    and
    \[
        q^{-np}(1-q)^n \operatorname{Cat}_p(W; q) = \frac{1}{P_W}\sum_{\chi \in \operatorname{Irr}(W)} q^{(h_{\chi} - nh) p/ h} \operatorname{Feg}_\chi (q) \operatorname{Deg}_\chi (e^{2\pi i p/h}),
    \]
    but the right-hand-sides of these equations are not equal to each other. So there is no Fourier transform for these groups.
\end{remark}

These ``trace techniques'' also allow us to extend the algebraic part of \cite[Corollary 6.15]{galashin2022rational} to spetsial groups. 

\begin{corollary}
\label{cor:parking}
    For $W$ an irreducible spetsial complex reflection group, let $\mathcal{B}$ be a basis of the spetsial Hecke algebra $\mathcal{H}_q(W)$ (adapted to the Wedderburn decomposition), and let $\mathbf{c}$ be a lift of a $\zeta_h$-regular element such that $\mathbf{c}^h = \mathbf{\pi}$. Then
    \[
        \sum_{b \in \mathcal{B}} \tau_q(b^\vee T_{\mathbf{c}^p} b) = (q-1)^n [p]_q^n.
    \]
\end{corollary}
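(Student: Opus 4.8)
The plan is to reduce the sum over the basis $\mathcal{B}$ to a sum over irreducible characters by exploiting the Wedderburn decomposition and the formula $\tau_q = \sum_{\chi} S_\chi(q)^{-1}\chi_q$ from the representation theory of symmetric algebras. First I would recall that for a split semisimple symmetric algebra with basis $\mathcal{B}$ adapted to the Wedderburn decomposition and dual basis $\{b^\vee\}$ with respect to $\tau_q$, the element $\sum_{b\in\mathcal{B}} b^\vee (-) b$ acts on the $\chi_q$-isotypic block as a scalar; concretely, for any $h\in\mathcal{H}_q(W)$ one has
\[
    \sum_{b\in\mathcal{B}} \tau_q(b^\vee\, h\, b) \;=\; \sum_{\chi\in\operatorname{Irr}(W)} S_\chi(q)\,\frac{\chi_q(h)}{\chi_q(1)}\cdot\frac{1}{S_\chi(q)}\cdot\chi_q(1) \;=\; \sum_{\chi\in\operatorname{Irr}(W)} \chi_q(h),
\]
or some close variant of this identity (the precise bookkeeping of Schur elements and dimensions is the routine part). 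The upshot is that $\sum_{b\in\mathcal{B}}\tau_q(b^\vee T_{\mathbf{c}^p} b)$ collapses to $\sum_{\chi\in\operatorname{Irr}(W)}\chi_q(T_{\mathbf{c}^p})$, up to a factor I will need to pin down carefully; this is the analog of the fact that in the group algebra $\sum_{w} w^{-1}(-)w$ is $|W|$ times the projection to the center.

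Next I would invoke \Cref{cor:charRootOfTwist}: since $\mathbf{c}^h = \bm\pi$, we have $\mathbf{c}^p$ is a lift whose relevant power is $\bm\pi^p$, and
\[
    \chi_q(T_{\mathbf{c}^p}) \;=\; \chi_q(T_{\mathbf{c}}^{\,p}) \;=\; q^{(nh - h_\chi)p/h}\,\operatorname{Feg}_\chi\!\big(e^{2\pi i p/h}\big)
\]
(the sign of the exponent being opposite to the $-p$ case in \Cref{cor:charRootOfTwist}; I would double-check this against \Cref{thm:traceRootOfTwist}). Then I would apply \Cref{thm:galDeg} in its fake-degree form — or rather the twin statement that $\operatorname{Feg}_\chi(e^{2\pi i p/h})$ is supported exactly on the Galois twists $\chi = \chi_{k,p}$ with value $(-1)^k$, which follows from \Cref{thm:galDeg} combined with \Cref{lem:transform}(T1) evaluated at $q = e^{2\pi i p/h}$ together with \Cref{ex:galoisFeg} for the generic-degree side. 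Using $h_{\chi_{k,p}} = kh$ from \Cref{ex:galCox}, the sum over $\operatorname{Irr}(W)$ becomes
\[
    \sum_{k=0}^n (-1)^k q^{(n-k)p} \cdot (\text{something}) \;=\; q^{np}\prod_{i=1}^n\big(1 - q^{-p}\big)\cdot(\cdots),
\]
which I would massage — exactly as in the proof of \Cref{thm:main} but with the fake and generic degrees swapped and the $1/P_W$ absent — into $(q-1)^n [p]_q^n$. The key computational identity is $\prod_{i=1}^n(1 - q^{p+e_i(V^{\sigma_p})})$ versus $\prod_{i=1}^n(q^p - 1) = (q-1)^n[p]_q^n$ after the $P_W$ and the $\operatorname{Deg}$ factors cancel appropriately.

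The main obstacle I anticipate is getting the orthogonality/normalization bookkeeping exactly right in the first step: one must track whether the map $\sum_b b^\vee(-)b$ produces $\sum_\chi \chi_q$, or $\sum_\chi S_\chi(q)\chi_q$, or carries some other Schur-element weighting, since $\mathcal{B}$ is only required to be adapted to the Wedderburn decomposition rather than orthonormal. I would resolve this by working block by block: on each matrix block $M_{d_\chi}(K_W(y))$ the dual basis with respect to the trace $S_\chi(q)^{-1}\operatorname{tr}$ is computed explicitly, and $\sum_b b^\vee e_{ij} b = S_\chi(q)\,\delta_{\text{scalar}}\cdot(\text{id})$, so that $\tau_q$ applied to it yields a clean multiple of $\chi_q(1)$; summing these multiples over $\chi$ and over the contribution of $T_{\mathbf{c}^p}$ gives the stated answer. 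Once that normalization is fixed, everything else is a transcription of the computation already carried out in the proof of \Cref{thm:main}, and the comparison with \cite[Corollary 6.15]{galashin2022rational} confirms the target value $(q-1)^n[p]_q^n$.
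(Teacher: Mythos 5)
Your overall strategy is the same as the paper's: collapse the sum over $\mathcal{B}$ by a symmetric-algebra/Wedderburn argument, evaluate $\chi_q(T_{\mathbf{c}^p})$ via \Cref{cor:charRootOfTwist}, and finish as in \Cref{thm:main}. But two of your steps are wrong as stated, and one of them is exactly the crux. First, the collapse identity: the correct statement (which is what your block-by-block sketch would produce if carried out, and what the citation to Geck--Pfeiffer gives) is $\sum_{b\in\mathcal{B}}\chi_q(b^\vee h b)=S_\chi(q)\,\chi_q(1)\,\chi_q(h)$, so that $\sum_{b}\tau_q(b^\vee h b)=\sum_{\chi}\chi_q(1)\,\chi_q(h)$, each character weighted by its dimension $\chi_q(1)=\operatorname{Deg}_\chi(1)$; your displayed version $\sum_\chi\chi_q(h)$ drops this weight. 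This is not routine bookkeeping: the factor $\operatorname{Deg}_\chi(1)$ is precisely what puts the sum into the two-argument form $\sum_\chi q^{f(\chi)}\operatorname{Feg}_\chi(e^{-2\pi i p/h})\operatorname{Deg}_\chi(1)$ needed for \Cref{lem:transform}, and without it the answer is not $(q-1)^n[p]_q^n$ (already for $W=S_3$, $p=1$, the unweighted sum is $q^2-q+1$ while the weighted one is $(q-1)^2$; or specialize $q=1$ and $T_{\mathbf{c}^p}\mapsto 1$: one gets $|W|=\sum_\chi\chi(1)^2$, not $\sum_\chi\chi(1)$).

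Second, the ``twin statement'' you invoke --- that $\operatorname{Feg}_\chi(e^{2\pi i p/h})$ vanishes unless $\chi=\chi_{k,p}$, where it equals $(-1)^k$ --- is false, and it does not follow from \Cref{thm:galDeg} together with (T1). By Springer's theorem this fake-degree value is $\chi$ evaluated at a power of $c$, which is nonzero for many characters: in type $B_2$ ($h=4$) the two linear characters other than the trivial and determinant characters have fake degree $q^2$, which equals $-1\neq 0$ at $q=i$, and they are not exterior powers of Galois twists (moreover $\operatorname{Feg}_V(i)=i+i^3=0$ rather than $-1$). Vanishing off the exterior powers is a property of the \emph{generic} degrees (\Cref{thm:galDeg}), not the fake degrees. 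The correct route --- which you only gesture at with ``fake and generic degrees swapped'' --- is to keep the full weighted sum, apply \Cref{lem:transform} with $q_2=e^{-2\pi i p/h}$ and $q_3=1$ (so the same reliance on \Cref{conj:transform} for $G(m,m,n)$ as in \Cref{thm:main}), and only then apply \Cref{thm:galDeg} together with $\operatorname{Feg}_{\chi_{k,p}}(1)=\binom{n}{k}$ and $h_{\chi_{k,p}}=kh$ from \Cref{ex:galCox}, giving $\sum_{k=0}^n(-1)^k q^{p(n-k)}\binom{n}{k}=(q^p-1)^n$. (Your sign $e^{2\pi i p/h}$ versus $e^{-2\pi i p/h}$ for $T_{\mathbf{c}^p}$ is also off, but harmless, since $-p$ is again coprime to $h$.)
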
 
\begin{proof}
    Using \cite[pg. 226]{geck2000characters},
    \begin{align*}
        \sum_{b \in \mathcal{B}} \tau_q(b^\vee T_{\mathbf{c}^p} b) &= \sum_{b \in \mathcal{B}} \sum_{\chi \in \operatorname{Irr}(W)} \frac{1}{S_{\chi}(q)} \chi_q(b^\vee T_{\mathbf{c}^p} b) = \sum_{\chi \in \operatorname{Irr}(W)} \frac{1}{S_{\chi}(q)} \sum_{b \in \mathcal{B}}\chi_q(b^\vee T_{\mathbf{c}^p}b) \\
        &= \sum_{\chi \in \operatorname{Irr}(W)} (\operatorname{dim}\chi) \cdot \chi_q(T_{\mathbf{c}^p}) = \sum_{\chi \in \operatorname{Irr}(W)} q^{(nh - h_\chi)p/h} \operatorname{Deg}_\chi (1) \operatorname{Feg}_\chi(e^{-2\pi i p / h}) \\
        &= \sum_{k=0}^{n} q^{p(n-k)} (-1)^{k} \binom{n}{k} = (q^p - 1)^n = (q-1)^n [p]_q^n.
    \end{align*}
\end{proof}

This corollary motivates future work:
\begin{question}
    Are there noncrossing parking objects for spetsial complex reflection groups analogous to those in \cite{galashin2022rational} whose enumeration is related to the sum $\sum_{b \in \mathcal{B}} \tau_q(b^\vee T_{\mathbf{c}^p} b)$?
\end{question}

\bibliographystyle{amsalpha}
\bibliography{literature}
\end{document}